\newtheorem{theorem}{Theorem}
\newtheorem{lemma}{Lemma}
\newtheorem{proposition}{Proposition}
\newtheorem{corollary}{Corollary}
\newtheorem{fact}{Fact}
\newtheorem{remark}{Remark}
\newtheorem{definition}{Definition}
\newcommand{\op}{\mathsf{op}}
\newcommand{\dvert}{\mathbin{\|}}
\newcommand{\bP}{\bm{\mathrm{P}}}
\renewcommand{\P}{\mathcal{P}}
\newcommand{\R}{\mathbb{R}} 
\newcommand{\N}{\mathcal{N}}
\newcommand{\M}{\mathsf{M}}
\newcommand{\E}{\mathop{\mathbb{E}}}
\newcommand{\cC}{\mathcal{C}}
\newcommand{\cK}{\mathcal{K}}
\newcommand{\cO}{\mathcal{O}}
\newcommand{\cW}{\mathcal{W}}
\newcommand{\nn}{\nonumber}
\newcommand{\bbN}{\mathbb{N}}
\newcommand{\HS}{\mathrm{HS}}
\newcommand{\Cov}{\mathrm{Cov}}
\newcommand{\Var}{\mathop{\mathrm{Var}}}
\newcommand{\Ent}{\mathsf{Ent}}
\newcommand{\KL}{\mathsf{KL}}
\newcommand{\FI}{\mathsf{FI}}
\newcommand{\D}{\,\mathrm{d}}
\newcommand{\law}{\mathsf{law}}
\newcommand{\Lip}{\mathsf{Lip}}
\newcommand{\sfD}{\mathsf{D}}
\newcommand{\sfH}{\mathsf{H}}
\newcommand{\prox}{\textnormal{\!prox}}
\newcommand{\MI}{\mathsf{MI}}
\newcommand{\PSI}{\Phi \mathsf{SI}}
\newcommand{\ULA}{\mathsf{ULA}}
\newcommand{\Dir}{\mathcal{E}}
\newcommand{\nll}{\centernot{\ll}}
\title[Contraction of $\Phi$-Mutual Information along the Langevin Dynamics and Algorithms]{Characterizing Dependence of Samples along the Langevin Dynamics and Algorithms via Contraction of $\Phi$-Mutual Information}
\begin{document}

\maketitle

\begin{abstract}%

The mixing time of a Markov chain determines how fast the iterates of the Markov chain converge to the stationary distribution; however, it does not control the dependencies between samples along the Markov chain.
In this paper, we study the question of how fast the samples 
become approximately independent
along popular Markov chains for continuous-space sampling: the Langevin dynamics in continuous time, and the Unadjusted Langevin Algorithm and the Proximal Sampler in discrete time.
We measure the dependence between samples via $\Phi$-mutual information, which is a broad generalization of the standard mutual information, and which is equal to $0$ if and only if the the samples are independent.
We show that along these Markov chains, the $\Phi$-mutual information between the first and the $k$-th iterate decreases to $0$ exponentially fast in $k$ when the target distribution is strongly log-concave. 
Our proof technique is based on showing the Strong Data Processing Inequalities (SDPIs) hold along the Markov chains.
To prove fast mixing of the Markov chains, we only need to show the SDPIs hold for the stationary distribution.
In contrast, 
to prove the contraction of $\Phi$-mutual information, we need to show the SDPIs hold along the entire trajectories of the Markov chains;
we prove this when the iterates along the Markov chains satisfy the corresponding $\Phi$-Sobolev inequality, which is implied by the strong log-concavity of the target distribution.
\end{abstract}

\begin{keywords}%
    Markov chain, Langevin dynamics, unadjusted Langevin algorithm, proximal sampler, $\Phi$-mutual information, $\Phi$-Sobolev inequality, strong data processing inequality
\end{keywords}

\footnotetext[1]{Accepted for presentation at the Conference on Learning Theory (COLT) 2025}

\setcounter{tocdepth}{2}
\tableofcontents

\section{Introduction}\label{sec:Introduction}

Consider the task of sampling from a target probability distribution $\nu \propto \exp{(-f)}$ supported on $\R^d$.
This is a fundamental algorithmic question appearing in many fields including machine learning, statistics, and Bayesian inference~\citep{gelman1995bayesian, von2011bayesian, johannes2003mcmc}. 
In settings where exact sampling from $\nu$ is not possible, a common approach is to construct a Markov chain with $\nu$ as its stationary distribution, and output the samples after an initial waiting (``burn-in'') period when the chain has approximately mixed. 
When implementing a Markov chain to draw samples, there are many considerations to make in order to obtain strong statistical guarantees, for instance, how many chains to run, how long to wait before outputting a sample, and where to start the chains from~\citep{gilks1995markov, kass1998markov}.

The mixing time of a Markov chain tracks how fast the iterate along the Markov chain converges to the stationary distribution, and thus it controls the burn-in period, i.e., how long to wait before obtaining a useful sample~\citep{gilks1995introducing, geyer2011introduction,levin2017markov}.
The mixing time can be defined with respect to a statistical divergence between probability distributions that we use to measure the error; this includes for example the Total Variation (TV) distance, Kullback-Leibler (KL) divergence, and chi-squared divergence, all of which are instances of a general family of $\Phi$-divergences induced by convex function $\Phi$ (see Definition~\ref{def:PhiDivergence}, and~\citep{chafai2004entropies, raginsky2016strong}).
Existing results in the literature have established good mixing time guarantees for Markov chains in various divergences, see e.g.,~\citep{levin2017markov, montenegro2006mathematical} for discrete-space Markov chains, and~\citep{chewi2023log} for continuous-space Markov chains.

The mixing time of a Markov chain only tracks how close the last iterate is from the stationary distribution;
however, it does not characterize the dependency between iterates along the Markov chain.
Even if each iterate along the Markov chain has the correct distribution (e.g., when we initialize the Markov chain from the stationary distribution), the dependencies between successive iterates can be large.
In many applications, we may want to generate multiple samples from the target distribution which are \textit{approximately independent}.
If we have the ability to run multiple independent chains, then we can produce multiple independent samples.
However, if we can only run \textit{one} Markov chain, then a natural strategy is to output a subsequence of the iterates along the Markov chain, by dropping several successive iterates until the next sample that we output is approximately independent from the current sample.
For this strategy to work, we need an estimate of how fast the dependence between the iterates is decreasing, or equivalently, how fast the iterates along a Markov chain become approximately independent.
This is the central question that we study in this paper.

There are various ways to quantify the dependence between samples.
A simple way is to control the covariance or the correlation between the samples; this may be sufficient for applications where we want to approximate the expectation of some function via the empirical average from the samples.
It is well-known that to control the correlation decay along a Markov chain, a spectral gap or a Poincar\'e inequality is sufficient; see e.g.,~\citep{madrasslade93,madras02markov}, see also Appendix~\ref{app:CovarianceDecay} for a review.
However, in other applications where we need to estimate more complicated properties of the distribution, such as for entropy or density estimation, we may need to have a stronger control on the dependence between the samples.

A natural measure of dependence in information theory is \textit{mutual information}, which is the KL divergence between the joint distribution of the samples and the product of the marginal distributions; thus, mutual information is non-negative, and it is $0$ when the samples are independent.
In some applications such as density estimation, we need a control not in the standard mutual information (induced by the KL divergence), but in a generalized notion including the Hellinger mutual information (induced by the Hellinger divergence)~\citep[Section~4.3]{zhang2021convergence} or the chi-squared mutual information (induced by the chi-squared divergence)~\citep{goldfeld2019estimating}.
This motivates our study in this paper where we measure the dependence between samples via the \textit{$\Phi$-mutual information} induced by the $\Phi$-divergence functional arising from any (twice-differentiable) strictly convex function $\Phi$; see Definition~\ref{def:PhiMutualInformation}.
The $\Phi$-mutual information is a stronger measure of dependence than covariance, in that a small $\Phi$-mutual information between samples implies that their covariance is small; see Lemma~\ref{lem:CovPhiMIBound} in Appendix~\ref{app:CovPhiMI}.

We study three Markov chains which are popular for continuous-space sampling: the (overdamped) Langevin dynamics~\eqref{eq:LangevinDynamics} in continuous time, and its time discretizations -- the Unadjusted Langevin Algorithm (ULA)~\eqref{eq:ULA} and the Proximal Sampler~\eqref{eqs:ProximalSampler} algorithm; see Section~\ref{Sec:MCs} for the definitions.
Mixing time guarantees for these Markov chains in various divergences have been well-studied, for the continuous-time Langevin dynamics in e.g.,~\citep{chafai2004entropies, villani2009optimal, dolbeault2018varphi, cao2019exponential}; for the ULA in e.g.,~\citep{dalalyan2017further, cheng2018convergence, durmus2019analysis, VW19, mitra2024fast}; and for the proximal sampler in e.g.,~\citep{lee2021structured,chen2022improved,yuan2023class,fan2023improved,mitra2024fast}. 
We provide additional references and discussion on related works in Appendix~\ref{sec:Intro_RelatedWork}.

In this paper, we study the question of when the iterates become approximately independent for these Markov chains, as measured in $\Phi$-mutual information.
We note that in the special case of the standard mutual information, \textit{if} we have a mixing time guarantee for the Markov chain from \textit{any} point mass initialization, then we can deduce a bound on the dependence between the samples; see Lemma~\ref{lem:IndependenceTimeleqMixingTime} in Appendix~\ref{app:IndependenceViaMixing}.
However, this requires mixing from any point mass initialization, which (for continuous-space sampling) is a strong assumption and only known to hold for a few results; furthermore, this argument does not seem to hold for more general $\Phi$-mutual information, see Appendix~\ref{app:IndependenceViaMixing} for further discussion.
We provide guarantees on general $\Phi$-mutual information.

\paragraph{Contributions.}
We show that when the target distribution is strongly log-concave, the $\Phi$-mutual information converges exponentially fast to $0$ along all the Markov chains we study: along the Langevin dynamics~\eqref{eq:LangevinDynamics} in continuous time (see Theorem~\ref{thm:LangevinDynamicsMain}), along the ULA~\eqref{eq:ULA} in discrete time under an additional smoothness assumption (see Theorem~\ref{thm:ULA_Main}), and along the Proximal Sampler~\eqref{eqs:ProximalSampler} in discrete time (see Theorem~\ref{thm:ProximalMain}).
Our proof technique proceeds via establishing the strong data processing inequalities along the iterates of the Markov chains, which we describe in Section~\ref{sec:Intro_SDPI}.

\subsection{Strong Data Processing Inequalities}\label{sec:Intro_SDPI}

We briefly discuss our main technique via strong data processing inequalities (SDPIs); we provide a more detailed discussion on SDPIs in Appendix~\ref{app:SDPI}.

\textit{Data processing inequality (DPI)} is a fundamental concept in information theory,
which states that information cannot increase along a noisy channel or a Markov chain. 
Concretely, let $\bP$ be a Markov kernel or transition operator representing the noisy channel, and let $\sfD_{\Phi}$ be the $\Phi$-divergence (Definition~\ref{def:PhiDivergence}) induced by any convex function $\Phi$.
Then the DPI in $\Phi$-divergence states that for any probability distributions $\mu$ and $\pi$, when we apply the same Markov kernel $\bP$ to both $\mu$ and $\pi$, the $\Phi$-divergence between them cannot increase: $\sfD_\Phi(\mu \bP \dvert \pi \bP) \leq \sfD_\Phi (\mu \dvert \pi)$. 

\textit{Strong data processing inequality (SDPI)}~\citep{raginsky2016strong, polyanskiy2025information, polyanskiy2017strong} is a strengthening of DPI which quantifies the rate at which information is decreasing, and it is typically a function of both the Markov chain and one of the input distributions.
For a Markov kernel $\bP$ and a probability distribution $\pi$, we can define the \textit{contraction coefficient} of $(\bP,\pi)$ in $\Phi$-divergence as:
\begin{equation}\label{eq:ContractionCoefficientMainText}
\varepsilon_{\sfD_{\Phi}}(\bP, \pi) \coloneqq \sup_{\mu} \frac{\sfD_\Phi(\mu \bP \dvert \pi \bP)}{\sfD_\Phi (\mu \dvert \pi)}\,
\end{equation}
where the supremum is over all probability distributions $\mu$ such that $0 < \sfD_{\Phi}(\mu\dvert \pi) < \infty$.
Note $\varepsilon_{\sfD_{\Phi}}(\bP, \pi) \le 1$ by the (weak) DPI.
We say that $(\bP, \pi)$ satisfies an \textit{SDPI in $\Phi$-divergence} if $\varepsilon_{\sfD_{\Phi}}(\bP, \pi) < 1$.
If we can bound $\varepsilon_{\sfD_{\Phi}}(\bP, \nu)$ when $\nu$ is stationary for $\bP$ (i.e., $\nu \bP = \nu$), then we immediately obtain a bound on the mixing time in $\Phi$-divergence along the Markov chain defined by $\bP$; see eq.~\eqref{eq:MixingViaSDPI} in Appendix~\ref{app:SDPI}.
In practice, bounding the contraction coefficient $\varepsilon_{\sfD_{\Phi}}(\bP, \nu)$ is typically the challenging step to prove mixing guarantees via SDPI; see e.g.,~\citep{raginsky2016strong} for the discrete state space setting, and~\citep{mitra2024fast} for the continuous state space setting.

We can equivalently describe DPIs and SDPIs in terms of the decay of mutual information of the iterates along a Markov chain.
Let $\bP$ be a Markov chain and suppose that the iterates along the chain are $X_i \sim \rho_i$ for $i \geq 0$. The data processing inequality in terms of mutual information states that for any $i \geq 0$, $\MI_{\Phi}(X_i; X_{i+2}) \leq \MI_{\Phi}(X_i; X_{i+1})$, i.e. that the $\Phi$-mutual information cannot increase along the chain.
SDPIs in $\Phi$-mutual information follow similarly by defining appropriate contraction coefficients (Definition~\ref{def:ContractionCoefficient_PhiMutualInformation}). After minor calculations, one can relate the drop in $\Phi$-mutual information between $X_0$ and $X_k$ by the product of contraction coefficients in $\Phi$-divergence along the chain (Lemma~\ref{lem:IndependenceContractionBySDPI}). That is, $\MI_{\Phi}(X_0; X_k) \leq \prod_{i=\ell}^k \varepsilon_{\sfD_{\Phi}}(\bP, \rho_i) \, \MI_{\Phi}(X_0; X_\ell)$ for any $\ell \geq 1$ and $k \geq \ell$. 
Thus, controlling the contraction coefficients also helps us control the decay of mutual information; 
however, note that now we need to control the contraction coefficients for distributions \textit{along the trajectory} of the Markov chain (whereas for mixing time, we only need to control the contraction coefficient for the stationary distribution).
This task of controlling the contraction coefficients along the trajectory makes studying the information contraction more subtle and challenging than the task for bounding the mixing time. 
We bound the contraction coefficients along the trajectory for the Langevin dynamics, ULA, and Proximal Sampler in Lemmas~\ref{lem:ContractionCoefficientForLangevinDynamics},~\ref{lem:ULAContractionCoefficientAndPSIEvolution}(a), and~\ref{lem:proximal_ContractionCoefficientBoundAndPSIEvolution}(a) respectively. 
These lemmas show that the contraction coefficients are strictly less than $1$ so long as the distributions along the trajectory satisfy a $\Phi$-Sobolev inequality (Definition~\ref{def:PhiSobolevInequality}). 
Ensuring that the distributions along the trajectory satisfy a $\Phi$-Sobolev inequality is the crucial part where we need the strong log-concavity of the target distribution.

%%%%%%%%%%%%%%%%%%%%%
\section{Preliminaries}\label{sec:Preliminaries}

We say a probability distribution $\nu \propto \exp{(-f)}$ on $\R^d$ with a twice continuously differentiable potential function $f \colon \R^d \to \R$ is \textit{$\alpha$-strongly log-concave} ($\alpha$-SLC) for some $\alpha > 0$ if $\nabla^2 f(x) \succeq \alpha I$ for all $x \in \R^d$; when $\alpha = 0$, we call $\nu$ \textit{weakly log-concave}.
We say $\nu \propto \exp(-f)$ is \textit{$L$-smooth} for some $0 < L < \infty$ if $-LI \preceq \nabla^2 f(x) \preceq LI$ for all $x \in \R^d$.

Throughout, we take $\Phi \colon \R_{\geq 0} \to \R$ to be a twice-differentiable strictly convex function with $\Phi(1) = 0$. The twice-differentiability ensures that the $\Phi$-Fisher information~\eqref{eq:PhiFisherInformation} is well defined, and the strict convexity ensures that the $\Phi$-divergence (Definition~\ref{def:PhiDivergence}) is $0$ if and only if both arguments are the same. Consequently, this ensures that the $\Phi$-mutual information (Definition~\ref{def:PhiMutualInformation}) is $0$ if and only if its arguments are independent.

Let $\P(\R^d)$ denote the space of probability distributions supported on $\R^d$.
Let $\P_{2,\text{ac}}(\R^d)$ denote the space of distributions $\rho \in \P(\R^d)$ with finite second moment and which are absolutely continuous with respect to the Lebesgue measure. 
In this paper, we will consider distributions in $\P_{2,\text{ac}}(\R^d)$, with a minor exception for Dirac distributions. 
We let $\delta_x$ denote the Dirac distribution (point mass) at point $x \in \R^d$; note $\delta_x \in \P(\R^d)$ but $\delta_x \notin \P_{2,\text{ac}}(\R^d)$. 
When $\rho \in \P_{2,\text{ac}}(\R^d)$, we will identify $\rho$ via its density function with respect to the Lebesgue measure, which we also denote by $\rho \colon \R^d \to \R$.
The notation $\mu \ll \nu$ denotes that $\mu$ is absolutely continuous with respect to $\nu$.

\subsection{$\Phi$-Divergence}

The $\Phi$-divergence or $f$-divergence~\citep{sason2016f, polyanskiy2025information} functional is a generalization of many popular statistical divergences such as KL divergence $(\KL(\mu \dvert \nu) = \E_{\mu}[\log \frac{\mu}{\nu}])$ and chi-squared divergence $(\chi^2(\mu \dvert \nu) = \E_{\nu}[(\frac{\mu}{\nu}-1)^2])$. The family of $\Phi$-divergences are defined via a convex function $\Phi: \R_{\geq 0} \to \R$ with $\Phi(1) = 0$ as follows. We further assume that $\Phi$ is twice-differentiable and strictly convex.

\begin{definition}\label{def:PhiDivergence}
    The $\Phi$-divergence between probability distributions $\mu$ and $\nu$ with $\mu \ll \nu$ is defined by
    \begin{equation}\label{eq:defPhiDiv}
        \sfD_{\Phi}(\mu \dvert \nu) \coloneqq  \E_\nu \left[\Phi\left(\frac{\mu}{\nu}\right)\right] = \int_{\R^d} \Phi\left(\frac{\mu(x)}{\nu(x)}\right)\nu(x) \D x.
    \end{equation}
    If $\mu \nll \nu$, then $\sfD_{\Phi}(\mu \dvert \nu) \coloneqq +\infty$\,.
\end{definition}
Since we assume $\Phi$ is strictly convex, the $\Phi$-divergence functional is non-negative, and it is equal to $0$ if and only if both arguments are the same~\citep[Theorem~7.5]{polyanskiy2025information}.
The KL divergence corresponds to the case $\Phi(x) = x \log x$, and the chi-squared divergence corresponds to the case $\Phi(x) = (x-1)^2$; see Table~\ref{table:PhiExamples} in Appendix~\ref{app:ExamplesOfPhiDiv} for further examples. 
The family of $\Phi$-divergences share many common properties such as satisfying data processing inequalities, and are therefore natural to study in a unified manner.

\subsection{$\Phi$-Mutual Information}

Each $\Phi$-divergence induces a $\Phi$-mutual information, in the same way that the KL divergence induces the standard mutual information. 
Recall for a joint random variable $(X,Y) \sim \rho^{XY}$, the (classical) mutual information functional is defined as $\MI(X; Y) \equiv \MI(\rho^{XY}) = \KL(\rho^{XY} \dvert \rho^X \otimes \rho^Y)$. 
We can generalize this to define $\Phi$-mutual information in terms of $\Phi$-divergence as follows.

\begin{definition}\label{def:PhiMutualInformation}
    Given two random variables $X$ and $Y$ on $\R^d$ with joint law $\rho^{XY}$, the $\Phi$-mutual information functional is given by
    \begin{equation}\label{eq:MIPhi}
        \MI_{\Phi}(X; Y) \equiv \MI_\Phi (\rho^{XY}) \coloneqq \sfD_\Phi \left(\rho^{XY} \dvert \rho^X \otimes \rho^Y \right) = \E_{x \sim \rho^X} \left[  \sfD_\Phi \left(\rho^{Y \mid X = x} \dvert \rho^Y \right)  \right].
    \end{equation}
\end{definition}
By the property of $\Phi$-divergence, we see that the $\Phi$-mutual information functional is always non-negative, and it is equal to $0$ if and only if $X$ and $Y$ are independent, i.e., $\rho^{XY} = \rho^X \otimes \rho^Y$. 
Note $\Phi$-mutual information is symmetric, i.e., $\MI_{\Phi}(X; Y) = \MI_{\Phi}(Y; X)$.

\subsection{$\Phi$-Sobolev Inequalities}

We now describe $\Phi$-Sobolev inequalities~\citep{chafai2004entropies, raginsky2016strong}, a family of isoperimetric inequalities which include as special cases popular inequalities such as the log-Sobolev inequality and the Poincaré inequality. 
The family of $\Phi$-Sobolev inequalities also form a natural condition for the exponential convergence of $\Phi$-divergence along the Langevin dynamics, as we review in Lemma~\ref{lem:PhiConvergenceAlongLangevin} in Appendix~\ref{app:LangevinDynamicsBackground}; this generalizes the convergence guarantees of KL divergence along the dynamics under a log-Sobolev inequality, and of chi-squared divergence under a Poincaré inequality.

As stated in Section~\ref{sec:Intro_SDPI}, we bound the contraction coefficients arising in the SDPIs as long as the distributions along the trajectory of the Markov chain satisfy $\Phi$-Sobolev inequalities. Hence, studying these families of inequalities and how they relate to the distributions along the Markov chain is crucial in our approach.

\begin{definition}\label{def:PhiSobolevInequality}
    A probability distribution $\nu$ satisfies a $\Phi$-Sobolev inequality $(\PSI)$ with constant $\alpha > 0$ if for all probability distributions $\mu \ll \nu$\,, we have
    \begin{equation}\label{eq:PhiSI_DistributionBased}
        2 \alpha \, \sfD_\Phi (\mu \dvert \nu) \leq \FI_\Phi (\mu \dvert \nu),
    \end{equation}
    where $\sfD_\Phi (\mu \dvert \nu)$ is defined in~\eqref{eq:defPhiDiv} and $\FI_\Phi (\mu \dvert \nu)$ is the $\Phi$-Fisher information functional defined as
    \begin{equation}\label{eq:PhiFisherInformation}
        \FI_\Phi (\mu \dvert \nu) \coloneqq  \E_{\nu} \left[ \left\|\nabla \frac{\mu}{\nu}\right\|^2 \Phi''\left( \frac{\mu}{\nu} \right) \right] = \int_{\R^d} \left\|\nabla \frac{\mu (x)}{\nu(x)}\right\|^2 \Phi''\left( \frac{\mu(x)}{\nu(x)} \right) \nu(x) \D x.
    \end{equation}    
    We define the $\Phi$-Sobolev constant of $\nu$ to be the optimal (largest) constant $\alpha$ such that~\eqref{eq:PhiSI_DistributionBased} holds, i.e.,
    \begin{equation}\label{eq:PhiSI_Constant}
        \alpha_{\PSI}(\nu) := \inf_{\mu} \frac{\FI_\Phi (\mu \dvert \nu)}{2\sfD_\Phi (\mu \dvert \nu)}
    \end{equation}
    where the infimum is taken over all probability distributions $\mu$ with $0 < \sfD_\Phi(\mu \dvert \nu) < \infty$.
\end{definition}

The $\Phi$-Sobolev inequality recovers the log-Sobolev inequality when $\Phi(x) = x \log x$, and the Poincaré inequality when $\Phi(x) = (x-1)^2$. 
The Poincaré inequality is the weakest $\Phi$-Sobolev inequality in that it is implied by any other $\Phi$-Sobolev inequality~\citep[Section~2.2]{chafai2004entropies}.

    To study how the $\Phi$-Sobolev constant~\eqref{eq:PhiSI_Constant} evolves along the Markov chain, we require further properties of these constants such as how they change under convolutions and pushforwards. We discuss these properties in Appendix~\ref{app:PhiSobolevProperties}.

\subsection{Langevin Dynamics, Unadjusted Langevin Algorithm, and the Proximal Sampler}
\label{Sec:MCs}

Here we formally introduce the Markov chains we will study. References for mixing times of these Markov chains can be found in Section~\ref{sec:Introduction} and Appendix~\ref{sec:Intro_RelatedWork}.

\paragraph{Langevin dynamics} The overdamped Langevin dynamics to sample from $\nu \propto \exp{(-f)}$ is given by the following stochastic differential equation (SDE)
\begin{equation}\label{eq:LangevinDynamics}
    \D X_t = -\nabla f(X_t) \D t + \sqrt{2} \D W_t\,,
\end{equation}
where $W_t$ is standard Brownian motion on $\R^d$.
The Langevin dynamics has $\nu$ as the stationary distribution~\citep{BGL14}, and hence it is a natural process to study for sampling.
However, this dynamics needs to be discretized in time to implement in practice.
We will focus on two discrete-time algorithms.

\paragraph{Unadjusted Langevin Algorithm} A forward Euler discretization of these dynamics gives rise to the Unadjusted Langevin Algorithm (ULA), explicitly given as
\begin{equation}\label{eq:ULA}
    X_{k+1} = X_k - \eta \nabla f(X_k) + \sqrt{2 \eta} Z_k\,,
\end{equation}
where $\eta >0$ is the step-size and $Z_k \sim \N(0,I)$.
Note that as $\eta \to 0$ and $\eta k \to t$, the ULA update~\eqref{eq:ULA} recovers the Langevin dynamics~\eqref{eq:LangevinDynamics}.
However, for each fixed $\eta > 0$, the ULA is biased, i.e., its stationary distribution is $\nu^{\eta} \neq \nu$.
For mixing time analysis, this results in a low-accuracy guarantee where the iteration complexity to reach an error $\epsilon$ in any specified divergence scales polynomially in $\epsilon^{-1}$.
Here, we show that the ULA still decreases the $\Phi$-mutual information exponentially fast, despite the biased limit; see Theorem~\ref{thm:ULA_Main}.

\paragraph{Proximal Sampler} The Proximal Sampler is a discrete-time Gibbs sampling-based algorithm. The Proximal Sampler considers an augmented $(X, Y)$ space $\R^d \times \R^d$ and alternatively samples from the conditional distributions. When referring to the Proximal Sampler, we denote the target distribution as $\nu^X \propto \exp{(-f)}$ on $\R^d$ and in general, use superscripts to denote the space supporting the distribution. The Proximal Sampler then considers the following joint target distribution 
\begin{equation}\label{eq:JointTargetForProximal}
\nu^{XY}(x,y) \propto \exp \left(-f(x) - \frac{\|x-y\|^2}{2\eta} \right),
\end{equation}
for step-size $\eta >0$.
The Proximal Sampler, initialized from $X_0 \sim \rho_0^X$, is the following two-step algorithm
\begin{align}\label{eqs:ProximalSampler}
    \begin{split}
        &\textbf{Step 1 (forward step)\,:} \text{\,\, Sample \,\,} Y_k \mid X_k \sim \nu^{Y \mid X = X_k} = \N(X_k, \eta I),\hspace{3cm}\\
        &\textbf{Step 2 (backward step)\,:} \text{\,\, Sample \,\,} X_{k+1} \mid Y_k \sim \nu^{X \mid Y = Y_k}.
    \end{split}
\end{align}
Observe that $\nu^{XY}$~\eqref{eq:JointTargetForProximal} has the desired target distribution $\nu^X$ as the $X$-marginal.
The forward step is easy to implement as it corresponds to drawing a Gaussian random variable. The backward step can be implemented given access to a Restricted Gaussian Oracle (RGO). 
A RGO is an oracle that, given any $y \in \R^d$, outputs a sample from $\nu^{X \mid Y=y}$, i.e. from
\begin{equation}\label{eq:RGOdistribution}
\nu^{X \mid Y}(x \mid y) \propto_x \exp \left( -f(x)-\frac{\|x-y\|^2}{2\eta}   \right).
\end{equation}
Similar to~\citet{lee2021structured, chen2022improved, mitra2024fast}, we consider the ideal Proximal Sampler which assumes access to a perfect RGO for our main result. We mention further details and background on the Proximal Sampler including a rejection sampling-based RGO implementation in Appendix~\ref{app:ProximalBackground}.

\section{Convergence of $\Phi$-Mutual Information along Langevin Dynamics}

We show that along the continuous-time Langevin dynamics~\eqref{eq:LangevinDynamics} for strongly log-concave target distributions, the $\Phi$-mutual information converges exponentially fast to $0$ as soon as we have an iterate which satisfies a $\Phi$-Sobolev inequality.

\begin{theorem}\label{thm:LangevinDynamicsMain}
    Assume $\nu$ is $\alpha$-SLC for some $\alpha > 0$. Let $X_t \sim \rho_t$ evolve following the Langevin dynamics~\eqref{eq:LangevinDynamics} to $\nu$ from $X_0 \sim \rho_0$, and let $\rho_{0,t}$ be the joint law of $(X_0, X_t)$. If for some $s > 0$ we know that $\rho_s$ satisfies a $\Phi$-Sobolev inequality with constant $\alpha_{\PSI}(\rho_s)$, then for all $t \geq s$:
    \begin{equation}\label{eq:LDMainConvergence}
        \MI_{\Phi}(\rho_{0,t}) \leq e^{-2\alpha (t-s)} \max\left\{1, \frac{\alpha}{\alpha_{\PSI}(\rho_s)}\right\} \MI_{\Phi}(\rho_{0,s}).
    \end{equation}
\end{theorem}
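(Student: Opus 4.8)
The plan is to differentiate the $\Phi$-mutual information directly along the flow, reduce its time-derivative to a relative $\Phi$-Fisher information, and then control that via the $\Phi$-Sobolev inequality. First I would disintegrate using~\eqref{eq:MIPhi}: writing $\rho_t^{x_0}$ for the law of $X_t$ conditioned on $X_0 = x_0$, we have $\MI_\Phi(\rho_{0,t}) = \E_{x_0 \sim \rho_0}[\sfD_\Phi(\rho_t^{x_0}\dvert\rho_t)]$. The crucial structural observation is that, since the Langevin dynamics~\eqref{eq:LangevinDynamics} is Markov, the conditional law $\rho_t^{x_0}$ is exactly the solution of the Fokker--Planck equation $\partial_t\rho = \Div(\rho\nabla f) + \Delta\rho$ started from $\delta_{x_0}$, while the marginal $\rho_t = \int \rho_t^{x_0}\,\rho_0(x_0)\D x_0$ solves the same equation started from $\rho_0$. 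Thus both arguments of the $\Phi$-divergence evolve under the \emph{same} PDE, and for $t\ge s>0$ the added Gaussian noise makes $\rho_t^{x_0}$ absolutely continuous, so all quantities are well defined.

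The key lemma I would establish is that for any two solutions $u_t,v_t$ of this Fokker--Planck equation,
\begin{equation*}
\frac{\mathrm{d}}{\mathrm{d} t}\sfD_\Phi(u_t\dvert v_t) = -\FI_\Phi(u_t\dvert v_t).
\end{equation*}
I would prove this by setting $h = u/v$, differentiating $\int\Phi(h)\,v$, substituting $\partial_t u = \mathcal{L}^* u$ and $\partial_t v = \mathcal{L}^* v$ for the forward generator $\mathcal{L}^*$, and integrating by parts to move everything onto the backward generator $\mathcal{L} g = -\nabla f\cdot\nabla g + \Delta g$. After expanding $\mathcal{L}(\Phi'(h))$ and $\mathcal{L}(\Phi(h) - h\Phi'(h))$ in terms of $\mathcal{L}h$ and $\|\nabla h\|^2$ and using $u = hv$, the first-order ($\mathcal{L}h$) contributions cancel \emph{exactly}, leaving precisely $-\int\Phi''(h)\|\nabla h\|^2\,v = -\FI_\Phi(u_t\dvert v_t)$. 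Integrating over $x_0\sim\rho_0$ then yields $\frac{\mathrm{d}}{\mathrm{d} t}\MI_\Phi(\rho_{0,t}) = -\E_{x_0}[\FI_\Phi(\rho_t^{x_0}\dvert\rho_t)]$.

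Next I would invoke the $\Phi$-Sobolev inequality~\eqref{eq:PhiSI_DistributionBased} for the marginal $\rho_t$ (the \emph{second} argument), which gives $\FI_\Phi(\rho_t^{x_0}\dvert\rho_t)\ge 2\alpha_{\PSI}(\rho_t)\,\sfD_\Phi(\rho_t^{x_0}\dvert\rho_t)$; averaging over $x_0$ produces the differential inequality $\frac{\mathrm{d}}{\mathrm{d} t}\MI_\Phi(\rho_{0,t})\le -2\alpha_{\PSI}(\rho_t)\,\MI_\Phi(\rho_{0,t})$, and Gr\"onwall gives $\MI_\Phi(\rho_{0,t})\le\exp(-2\int_s^t\alpha_{\PSI}(\rho_r)\D r)\,\MI_\Phi(\rho_{0,s})$. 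To convert this into the stated rate $2\alpha$, I would use that $\nu$ is $\alpha$-SLC so the $\Phi$-Sobolev constant improves toward $\alpha$ along the flow; quantitatively I would invoke (from the $\Phi$-Sobolev evolution properties of Appendix~\ref{app:PhiSobolevProperties}) the lower bound $\alpha_{\PSI}(\rho_r)\ge \alpha\,\alpha_{\PSI}(\rho_s)/(\alpha_{\PSI}(\rho_s) + (\alpha-\alpha_{\PSI}(\rho_s))e^{-2\alpha(r-s)})$. Substituting $w = e^{-2\alpha(r-s)}$ and integrating by partial fractions evaluates $\int_s^t\alpha_{\PSI}(\rho_r)\D r\ge \alpha(t-s) + \tfrac12\log\!\big((\alpha_{\PSI}(\rho_s)+(\alpha-\alpha_{\PSI}(\rho_s))e^{-2\alpha(t-s)})/\alpha\big)$, and a short case analysis on whether $\alpha_{\PSI}(\rho_s)$ is at least $\alpha$ or not turns this exactly into the factor $e^{-2\alpha(t-s)}\max\{1,\alpha/\alpha_{\PSI}(\rho_s)\}$.

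I expect the main obstacle to be establishing and applying the evolution of the $\Phi$-Sobolev constant $\alpha_{\PSI}(\rho_t)$ along the trajectory: this is precisely where strong log-concavity enters (ensuring that $\nu$, and hence the flowed distributions, satisfy a $\Phi$-Sobolev inequality whose constant converges to $\alpha$), and it relies on the propagation/contraction behavior of $\Phi$-Sobolev constants under the Langevin semigroup. By contrast, the differentiation identity for $\sfD_\Phi(u_t\dvert v_t)$, although the conceptual heart of the argument, is a self-contained computation once the cancellation of the $\mathcal{L}h$ terms is spotted; the only care needed there is justifying the integration by parts and the differentiation under the expectation, which hold under standard regularity and decay of the densities for $t>0$.
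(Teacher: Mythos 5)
Your proposal is correct and follows essentially the same route as the paper's direct time-derivative proof (Appendix~\ref{app:DirectPfOfLangevinDynamicsMain}): the simultaneous-evolution identity $\frac{\D}{\D t}\sfD_\Phi(u_t\dvert v_t)=-\FI_\Phi(u_t\dvert v_t)$ is the paper's Lemma~\ref{lem:SimultaneousSDE}, its average over $x_0$ is the $\Phi$-mutual de Bruijn identity (Lemma~\ref{lem:PhiMutualDeBruijn}), the averaged $\Phi$-Sobolev step is Lemma~\ref{lem:mutual_PhiSI}, and your lower bound on $\alpha_{\PSI}(\rho_r)$ together with the partial-fraction integration and the convex-combination/case analysis reproduces Lemma~\ref{lem:PSIEvolutionAlongLangevinDynamics} and the final computation verbatim. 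The only ingredient you gesture at rather than prove is the quantitative evolution of $\alpha_{\PSI}(\rho_r)$ along the flow, which the paper obtains by discretizing the dynamics and combining the pushforward and convolution properties of Appendix~\ref{app:PhiSobolevProperties} before passing to the limit.
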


We provide two proofs of Theorem~\ref{thm:LangevinDynamicsMain}. The first proof via our primary strategy of SDPIs follows by taking the appropriate limits of the discrete-time ULA analysis. We present the SDPI-based proof of Theorem~\ref{thm:LangevinDynamicsMain} in Appendix~\ref{app:SDPIPfOfLangevinDynamicsMain}. The second proof of Theorem~\ref{thm:LangevinDynamicsMain} following a direct time derivative approach is described in Section~\ref{sec:MainSectionLD_DirectProofSketch}, and the full proof is presented in Appendix~\ref{app:DirectPfOfLangevinDynamicsMain}.

This rate of convergence of $\Phi$-mutual information matches the rate of convergence of $\Phi$-divergence along the dynamics (Lemma~\ref{lem:PhiConvergenceAlongLangevin}).
The condition that $\rho_s$ satisfy a $\Phi$-Sobolev inequality (Definition~\ref{def:PhiSobolevInequality}) can be ensured by initializing the process from $\rho_0$ which satisfies a $\Phi$-Sobolev inequality, such as from a strongly log-concave distribution, for example a Gaussian distribution. 
The fact that strongly log-concave distributions satisfy a $\Phi$-Sobolev inequality is mentioned in Lemma~\ref{lem:PhiSISLC}, and the fact that if $\rho_0$ satisfies a $\Phi$-Sobolev inequality then $\rho_s$ (for $s > 0$) does too is a consequence of Lemma~\ref{lem:PSIEvolutionAlongLangevinDynamics}.

Observe how the right-hand side of~\eqref{eq:LDMainConvergence} has dependence on time $s$. For the special case of $\Phi(x) = x \log x$, we obtain bounds on the convergence of mutual information along the Langevin dynamics which do not possess this dependence and which do not require $\rho_s$ to satisfy a $\Phi$-Sobolev inequality.
This result follows by exploiting the regularity properties of the dynamics and we mention it in Theorem~\ref{thm:MI_LD_regularize} in Appendix~\ref{app:RegularityLD}.
We mention the tightness of Theorems~\ref{thm:LangevinDynamicsMain} and~\ref{thm:MI_LD_regularize} for the special case of $\Phi(x) = x \log x$ by explicitly computing the mutual information along the Langevin dynamics for $\nu$ being a Gaussian distribution; we describe this in Appendix~\ref{app:Tightness_LD}.

\subsection{Proof Sketch Based on Time Derivative}\label{sec:MainSectionLD_DirectProofSketch}

We present the direct proof of Theorem~\ref{thm:LangevinDynamicsMain} which is based on taking the time derivative of the $\Phi$-mutual information functional. 
The classical de Bruijn's identity~\citep{stam1959some} shows the time derivative of the relative entropy along the the Langevin dynamics is the relative Fisher information; here we show that the time derivative of the $\Phi$-mutual information functional along the Langevin dynamics is given by the $\Phi$-mutual Fisher information, which we define below.
We prove Lemma~\ref{lem:PhiMutualDeBruijn} in Appendix~\ref{app:PfOfPhiMutualDeBruijn}.

\begin{lemma}\label{lem:PhiMutualDeBruijn}
    Suppose $X_t \sim \rho_t$ evolves following the Langevin dynamics~\eqref{eq:LangevinDynamics} and let the joint law of $(X_0,X_t)$ be $\rho_{0,t}$\,.
    Then for $t > 0$
    \begin{align}\label{eq:deBruijnPhiMI}
        \frac{\D }{\D t} \MI_\Phi(\rho_{0,t}) = - \FI_\Phi^\M(\rho_{0,t}),
    \end{align}
    where $\FI^\M_\Phi$ is the $\Phi$-mutual Fisher information defined for a joint distribution $\rho^{XY}$ as
    \begin{align}\label{eq:MutualPhiFisherInfo}
        \FI^\M_\Phi (\rho^{XY}) &\coloneqq \E_{x \sim \rho^X} \left[\FI_\Phi \left(\rho^{Y \mid X = x} \dvert \rho^Y \right)  \right].
    \end{align}
\end{lemma}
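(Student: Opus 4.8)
The plan is to reduce the mutual-information derivative to a per-slice computation, and then to establish a de Bruijn--type dissipation identity for the $\Phi$-divergence between two measures that both flow under the Langevin Fokker--Planck equation. First I would use the disintegration identity in Definition~\ref{def:PhiMutualInformation} to write $\MI_\Phi(\rho_{0,t}) = \E_{x_0 \sim \rho_0}\bigl[\sfD_\Phi(\rho_t^{x_0} \dvert \rho_t)\bigr]$, where $\rho_t^{x_0}$ is the conditional law of $X_t$ given $X_0 = x_0$ and $\rho_t$ is the marginal. The structural point is that, by the Markov property of the SDE~\eqref{eq:LangevinDynamics}, conditioning on $X_0 = x_0$ leaves the dynamics unchanged: $\rho_t^{x_0}$ is the transition density started from $\delta_{x_0}$, so for $t>0$ it is a smooth density solving the same Fokker--Planck equation $\partial_t \rho = \Delta \rho + \Div(\rho \nabla f)$ as the marginal $\rho_t$, with the same stationary measure $\nu$. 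Since the outer law $\rho_0$ does not depend on $t$, I can differentiate under the $x_0$-integral, so it suffices to compute $\frac{\D}{\D t}\sfD_\Phi(\mu_t \dvert \nu_t)$ when both $\mu_t := \rho_t^{x_0}$ and $\nu_t := \rho_t$ solve the Fokker--Planck equation with common stationary measure $\nu$.

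The key reduction is to pass to ratios against $\nu$. Writing $u_t = \mu_t/\nu$ and $g_t = \nu_t/\nu$, a short computation shows each ratio evolves by the symmetric generator $\mathcal{L} = \Delta - \nabla f \cdot \nabla$, i.e. $\partial_t u = \mathcal{L}u$ and $\partial_t g = \mathcal{L}g$, where $\mathcal{L}$ is self-adjoint in $L^2(\nu)$ with Dirichlet form $\E_\nu[\phi\,\mathcal{L}\psi] = -\E_\nu[\nabla\phi\cdot\nabla\psi]$. I would then express $\sfD_\Phi(\mu_t \dvert \nu_t) = \E_\nu[\Phi(u_t/g_t)\,g_t]$ and differentiate in $t$, using the chain rule to produce the partials $\partial_u[\Phi(u/g)g] = \Phi'(h)$ and $\partial_g[\Phi(u/g)g] = \Phi(h) - h\Phi'(h)$, where $h_t = \mu_t/\nu_t = u_t/g_t$. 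Applying the integration-by-parts identity to each term and simplifying $\nabla\bigl(\Phi(h) - h\Phi'(h)\bigr) = -h\Phi''(h)\nabla h$, I arrive at $-\E_\nu[\Phi''(h)\nabla h\cdot\nabla u] + \E_\nu[h\Phi''(h)\nabla h\cdot\nabla g]$. Substituting $\nabla u = g\nabla h + h\nabla g$ makes the cross terms cancel exactly, leaving $-\E_\nu[\Phi''(h)\,\|\nabla h\|^2 g] = -\FI_\Phi(\mu_t \dvert \nu_t)$, since $g\nu = \rho_t$. Taking expectation over $x_0$ then gives $\frac{\D}{\D t}\MI_\Phi(\rho_{0,t}) = -\E_{x_0}[\FI_\Phi(\rho_t^{x_0} \dvert \rho_t)] = -\FI_\Phi^\M(\rho_{0,t})$, as claimed.

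The main obstacle is analytic rather than algebraic: justifying the two formal manipulations, namely differentiating under the $x_0$- and $x$-integrals, and discarding boundary terms in the integration by parts. These require enough regularity and tail decay of $\mu_t, \nu_t$ and their gradients, together with integrability of $\Phi(h)$, $\Phi'(h)$, and $\Phi''(h)\|\nabla h\|^2$ against $\nu$. For $t>0$ this is underwritten by the instantaneous smoothing of the heat semigroup (so the singular Dirac initialization of $\rho_t^{x_0}$ is harmless), and by the Gaussian-type tails coming from the smooth strongly log-concave potential $f$; I would record the needed regularity and decay as standing hypotheses and verify the dominated-convergence and no-boundary-term conditions under them. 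A secondary point to handle with care is the behavior of $\Phi''$ near the endpoints of the range of $h$, which is exactly where finiteness and positivity of $\FI_\Phi$ must be controlled.
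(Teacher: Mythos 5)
Your proposal is correct, and its outer structure is exactly the paper's: disintegrate $\MI_\Phi(\rho_{0,t})$ as $\E_{x_0\sim\rho_0}[\sfD_\Phi(\rho_{t\mid 0}(\cdot\mid x_0)\dvert\rho_t)]$, observe that the conditional law (started from $\delta_{x_0}$) and the marginal both solve the same Fokker--Planck equation, differentiate under the $x_0$-integral, and invoke a de Bruijn--type dissipation identity for the $\Phi$-divergence between two simultaneous solutions. Where you differ is in how you prove that dissipation identity. The paper isolates it as Lemma~\ref{lem:SimultaneousSDE}, working directly with the Lebesgue densities $\mu_t,\nu_t$, expanding $\partial_t\int\nu_t\Phi(\mu_t/\nu_t)$ into three terms and integrating by parts against the Fokker--Planck operator; that argument needs no stationary measure and holds for an arbitrary time-varying drift $b_t$ (which the paper exploits later for the heat flow and the backward heat flow). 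You instead pass to the ratios $u_t=\mu_t/\nu$, $g_t=\nu_t/\nu$ against the stationary measure, use that both evolve by the self-adjoint generator $\mathcal{L}=\Delta-\nabla f\cdot\nabla$, and let the Dirichlet-form identity do the bookkeeping; the cancellation via $\nabla u=g\nabla h+h\nabla g$ is correct and lands on $-\E_{\nu_t}[\Phi''(h)\|\nabla h\|^2]=-\FI_\Phi(\mu_t\dvert\nu_t)$ as required. Your route is arguably cleaner because reversibility collapses the three-term expansion into two, but it is strictly less general: it requires a gradient drift and a known invariant measure, so it would not directly yield the paper's Lemma~\ref{lem:SimultaneousSDE} as used elsewhere (e.g., for the backward heat flow with time-varying drift). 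For this lemma that generality is not needed, and your treatment of the analytic caveats (differentiation under the integral, boundary terms, smoothing of the Dirac initialization for $t>0$) matches the level of rigor the paper itself adopts; the only nitpick is that you invoke strong log-concavity of $f$ for tail control, whereas the lemma as stated does not assume it.
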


Next, we show that the $\Phi$-mutual Fisher information can be lower bounded in terms of the $\Phi$-mutual information, under a $\Phi$-Sobolev inequality assumption on one of the marginal distributions. 
We prove Lemma~\ref{lem:mutual_PhiSI} in Appendix~\ref{app:PfOfmutual_PhiSI}.

    \begin{lemma}\label{lem:mutual_PhiSI}
    Let the joint law of $(X,Y)$ be $\rho^{XY}$ and suppose $\rho^Y$ satisfies $\Phi$-Sobolev inequality with optimal constant $\alpha_{\PSI}(\rho^Y)$\,. Then
    \[
    2\alpha_{\PSI}(\rho^Y)\, \MI_{\Phi}(\rho^{XY}) \leq \FI_{\Phi}^\M (\rho^{XY})\,.
    \]
    \end{lemma}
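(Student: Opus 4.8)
The plan is to reduce the claim to a pointwise application of the $\Phi$-Sobolev inequality for $\rho^Y$, followed by integration against the marginal $\rho^X$. Both sides of the desired inequality are, by Definition~\ref{def:PhiMutualInformation} and~\eqref{eq:MutualPhiFisherInfo}, averages over $x \sim \rho^X$ of quantities attached to the conditional slices $\rho^{Y \mid X = x}$: the left-hand side averages $\sfD_\Phi(\rho^{Y \mid X = x} \dvert \rho^Y)$, and the right-hand side averages $\FI_\Phi(\rho^{Y \mid X = x} \dvert \rho^Y)$. So it suffices to compare these two integrands for (almost) every fixed $x$ and then integrate.

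First I would disintegrate $\rho^{XY}$ along $X$ to obtain the family of conditional laws $\{\rho^{Y \mid X = x}\}_x$. If $\MI_\Phi(\rho^{XY}) = +\infty$ the inequality is trivial, so I may assume $\MI_\Phi(\rho^{XY}) < \infty$; then $\E_{x \sim \rho^X}[\sfD_\Phi(\rho^{Y \mid X = x} \dvert \rho^Y)] < \infty$, which forces $\sfD_\Phi(\rho^{Y \mid X = x} \dvert \rho^Y) < \infty$, and hence $\rho^{Y \mid X = x} \ll \rho^Y$, for $\rho^X$-almost every $x$. For each such $x$, the conditional $\rho^{Y \mid X = x}$ is an admissible test distribution in the $\Phi$-Sobolev inequality~\eqref{eq:PhiSI_DistributionBased} for $\rho^Y$.

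Next, I would apply the $\Phi$-Sobolev inequality for $\rho^Y$ with optimal constant $\alpha_{\PSI}(\rho^Y)$, taking $\mu = \rho^{Y \mid X = x}$ and $\nu = \rho^Y$. This yields, for $\rho^X$-almost every $x$,
\[
2\alpha_{\PSI}(\rho^Y)\, \sfD_\Phi(\rho^{Y \mid X = x} \dvert \rho^Y) \leq \FI_\Phi(\rho^{Y \mid X = x} \dvert \rho^Y).
\]
Finally, integrating both sides over $x \sim \rho^X$ and pulling the constant out of the expectation gives exactly
\[
2\alpha_{\PSI}(\rho^Y)\, \MI_\Phi(\rho^{XY}) \leq \FI_\Phi^\M(\rho^{XY}),
\]
by the definitions of $\MI_\Phi$ in Definition~\ref{def:PhiMutualInformation} and of $\FI_\Phi^\M$ in~\eqref{eq:MutualPhiFisherInfo}.

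I do not expect any substantial obstacle here: the statement is essentially the averaged (or ``conditional'') form of a single $\Phi$-Sobolev inequality, and all the content lies in the hypothesis on $\rho^Y$. The only points requiring care are measure-theoretic: ensuring the disintegration $\{\rho^{Y \mid X = x}\}_x$ is well defined, and justifying the absolute continuity $\rho^{Y \mid X = x} \ll \rho^Y$ for almost every $x$ (handled via the finiteness reduction above, since otherwise both sides are $+\infty$ and the bound holds trivially). Both are routine, so the proof should be short.
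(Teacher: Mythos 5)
Your proposal is correct and follows essentially the same argument as the paper: apply the $\Phi$-Sobolev inequality for $\rho^Y$ to each conditional $\rho^{Y\mid X=x}$ and integrate over $x\sim\rho^X$. The extra measure-theoretic remarks (disintegration, reduction to the finite case) are fine but not needed beyond what the paper already assumes.
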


Theorem~\ref{thm:LangevinDynamicsMain} follows by combining Lemma~\ref{lem:mutual_PhiSI} with~\eqref{eq:deBruijnPhiMI} and integrating; we provide this proof in Appendix~\ref{app:DirectPfOfLangevinDynamicsMain}. However, note that combining Lemma~\ref{lem:mutual_PhiSI} with~\eqref{eq:deBruijnPhiMI} requires $\rho_t$ along the Langevin dynamics to satisfy a $\Phi$-Sobolev inequality. This challenge is consistent with the SDPI approach (discussed in Section~\ref{sec:Intro_SDPI}), and this is where we crucially require the strong log-concavity of $\nu$.

\section{Convergence of $\Phi$-Mutual Information along ULA}

We show the following result on the exponential convergence of $\Phi$-mutual information along the ULA~\eqref{eq:ULA} for smooth and strongly log-concave target distribution, provided an iterate along ULA satisfies a $\Phi$-Sobolev inequality. 
We present a proof sketch of Theorem~\ref{thm:ULA_Main} in Section~\ref{sec:MainSectionULA_SDPIProofSketch}, and we provide the full proof in Appendix~\ref{app:PfOfThmULAMain}.

\begin{theorem}\label{thm:ULA_Main}
    Suppose $\nu$ is $\alpha$-strongly log-concave and $L$-smooth for some $0 < \alpha \leq L < \infty$. 
    Let $X_k \sim \rho_k$ evolve following ULA~\eqref{eq:ULA} with step-size $\eta \leq 1/L$ from $X_0 \sim \rho_0$, and let the joint law of $(X_i, X_j)$ be $\rho_{i,j}$\,. If $\rho_\ell$ satisfies a $\Phi$-Sobolev inequality with optimal constant $\alpha_{\PSI}(\rho_\ell)$ for some $\ell \geq 1$\,.
    Then for all $k \ge \ell$, we have
    \begin{equation}\label{eq:ULA_MainTheorem}
        \MI_{\Phi}(\rho_{0,k}) \leq (1-\alpha\eta)^{2(k-\ell)}\max \left\{1, \frac{\alpha}{\alpha_{\PSI}(\rho_\ell)} \right\} \MI_{\Phi}(\rho_{0,\ell}).
    \end{equation}
\end{theorem}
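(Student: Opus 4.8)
The plan is to run the strong data processing inequality (SDPI) argument outlined in Section~\ref{sec:Intro_SDPI}, so that the whole problem reduces to controlling the one-step contraction coefficients $\varepsilon_{\sfD_\Phi}(\bP,\rho_i)$ of the ULA kernel $\bP$ along the trajectory. First I would invoke the telescoping bound of Lemma~\ref{lem:IndependenceContractionBySDPI}, which gives
\[
\MI_\Phi(\rho_{0,k}) \leq \Big(\prod_{i=\ell}^{k-1} \varepsilon_{\sfD_\Phi}(\bP,\rho_i)\Big)\, \MI_\Phi(\rho_{0,\ell})
\]
for every $k \geq \ell$. It then suffices to show that the product on the right is at most $(1-\alpha\eta)^{2(k-\ell)}\max\{1,\alpha/\alpha_{\PSI}(\rho_\ell)\}$.

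For the per-step factor I would exploit the factorization of the ULA kernel: one step first pushes a law forward by the deterministic drift $T_\eta(x) = x - \eta\nabla f(x)$ and then convolves with $\N(0,2\eta I)$; write $\bQ$ for this Gaussian-convolution channel. Under $\alpha$-strong log-concavity, $L$-smoothness, and $\eta \leq 1/L$, the Jacobian obeys $0 \preceq I - \eta\nabla^2 f(x) \preceq (1-\alpha\eta)I$, so $T_\eta$ is $(1-\alpha\eta)$-Lipschitz. Since pushforward cannot increase $\Phi$-divergence, the data processing inequality yields $\varepsilon_{\sfD_\Phi}(\bP,\rho_i) \leq \varepsilon_{\sfD_\Phi}(\bQ,(T_\eta)_\#\rho_i)$, so the entire contraction is located in the Gaussian step. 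This is the content of Lemma~\ref{lem:ULAContractionCoefficientAndPSIEvolution}(a): the Gaussian channel contracts $\Phi$-divergence at a rate governed by the $\Phi$-Sobolev constant of the pushed-forward law, while the $(1-\alpha\eta)$-Lipschitz drift improves that $\Phi$-Sobolev constant by the factor $(1-\alpha\eta)^{-2}$ through the pushforward behavior of $\alpha_{\PSI}$ recorded in Appendix~\ref{app:PhiSobolevProperties}. Combining the two effects yields a one-step bound of the shape $\varepsilon_{\sfD_\Phi}(\bP,\rho_i) \leq (1-\alpha\eta)^2 \cdot \min\{\alpha,\alpha_{\PSI}(\rho_{i+1})\}/\min\{\alpha,\alpha_{\PSI}(\rho_i)\}$, where the clamp at $\alpha$ records that the strong log-concavity of $\nu$ caps the effective contraction rate at $\alpha$.

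Next I would track the $\Phi$-Sobolev constant along the chain via Lemma~\ref{lem:ULAContractionCoefficientAndPSIEvolution}(b), which chains the pushforward rule $\alpha_{\PSI}((T_\eta)_\#\rho_i) \geq \alpha_{\PSI}(\rho_i)/(1-\alpha\eta)^2$ with the convolution rule for $\Phi$-Sobolev constants from Appendix~\ref{app:PhiSobolevProperties}; the crucial input is the strong log-concavity of $\nu$, guaranteeing that each $\rho_i$ with $i \geq \ell$ inherits a $\Phi$-Sobolev inequality once $\rho_\ell$ does. Feeding the per-step bounds into the telescoping product then cancels the intermediate $\Phi$-Sobolev factors, since the numerator at step $i$ meets the denominator at step $i+1$, leaving only $(1-\alpha\eta)^{2(k-\ell)}\cdot\min\{\alpha,\alpha_{\PSI}(\rho_k)\}/\min\{\alpha,\alpha_{\PSI}(\rho_\ell)\}$. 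Bounding $\min\{\alpha,\alpha_{\PSI}(\rho_k)\} \leq \alpha$ produces exactly the factor $\max\{1,\alpha/\alpha_{\PSI}(\rho_\ell)\}$ and finishes the proof.

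I expect the main obstacle to be Lemma~\ref{lem:ULAContractionCoefficientAndPSIEvolution}(a): bounding the contraction coefficient of the Gaussian channel for a \emph{general} $\Phi$-divergence and tying it cleanly to $\alpha_{\PSI}$. Unlike the log-Sobolev case $\Phi(x)=x\log x$ and the Poincaré case $\Phi(x)=(x-1)^2$, the convolution and pushforward behavior of the $\Phi$-Sobolev constant for an arbitrary strictly convex $\Phi$ is delicate, and one must verify the properties collected in Appendix~\ref{app:PhiSobolevProperties}. A secondary difficulty, specific to discrete time and absent in the continuous-time Theorem~\ref{thm:LangevinDynamicsMain}, is arranging the per-step inequalities so that the intermediate $\Phi$-Sobolev constants cancel exactly in the product, leaving the clean rate $(1-\alpha\eta)^{2(k-\ell)}$ with only a single boundary correction.
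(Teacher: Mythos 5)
Your proposal follows essentially the same route as the paper: the telescoping bound of Lemma~\ref{lem:IndependenceContractionBySDPI}, the pushforward-plus-Gaussian-convolution factorization of the ULA kernel, and both parts of Lemma~\ref{lem:ULAContractionCoefficientAndPSIEvolution} to bound each $\varepsilon_{\sfD_\Phi}(\bP,\rho_i)$ and propagate the $\Phi$-Sobolev inequality along the trajectory. The only (cosmetic) difference is in extracting the boundary factor: you clamp at $\alpha$ per step via $\varepsilon_i \le \gamma^2\min\{\alpha,\alpha_{\PSI}(\rho_{i+1})\}/\min\{\alpha,\alpha_{\PSI}(\rho_i)\}$ — which is not literally Lemma~\ref{lem:ULAContractionCoefficientAndPSIEvolution}(a) but does follow from combining parts (a) and (b) with the identity $(1-\alpha\eta)^2+2\eta\alpha = 1+\eta^2\alpha^2\ge 1$, a one-line check you should make explicit — whereas the paper telescopes the raw ratio $\gamma^2\alpha_{\PSI}(\rho_{i+1})/\alpha_{\PSI}(\rho_i)$ and bounds $\alpha_{\PSI}(\rho_k)/\alpha_{\PSI}(\rho_\ell)$ at the end by solving the recursion $c_{i+1}\le\gamma^2 c_i+2\eta$; you should also not skip the verification that $F$ is bijective, which Lemma~\ref{lem:ULAContractionCoefficientAndPSIEvolution}(a) requires.
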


Theorem~\ref{thm:ULA_Main} implies the following corollary regarding the iteration complexity of ULA to output approximately independent samples.
We provide the proof of Corollary~\ref{cor:ULA} in Appendix~\ref{app:ULAIterationComplexity}.

\begin{corollary}\label{cor:ULA}
    Under the same assumptions as Theorem~\ref{thm:ULA_Main} and given any error threshold $\epsilon >0$, ULA~\eqref{eq:ULA} outputs a sample $X_k$ such that $\MI_{\Phi}(X_0; X_k) \leq \epsilon$ as long as 
    \[
k \geq \ell + \frac{1}{2\alpha \eta}\log\left(\epsilon^{-1}\max\left\{1, \frac{\alpha}{\alpha_{\PSI}(\rho_\ell)}\right\} \MI_{\Phi}(\rho_{0,\ell})\right).
\]
\end{corollary}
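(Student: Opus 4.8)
The plan is to read off Corollary~\ref{cor:ULA} directly from the exponential-decay estimate of Theorem~\ref{thm:ULA_Main}, so the argument is essentially a one-step inversion of that bound. First I would abbreviate $C \coloneqq \max\{1, \alpha/\alpha_{\PSI}(\rho_\ell)\}\, \MI_{\Phi}(\rho_{0,\ell})$, so that Theorem~\ref{thm:ULA_Main} gives $\MI_{\Phi}(\rho_{0,k}) \leq (1-\alpha\eta)^{2(k-\ell)} C$ for every $k \geq \ell$. It then suffices to choose $k$ large enough that the right-hand side drops below $\epsilon$, i.e.\ that $(1-\alpha\eta)^{2(k-\ell)} \leq \epsilon/C$; if $C \leq \epsilon$ the bound already holds at $k = \ell$, so the substantive case is $C > \epsilon$.

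Next I would take logarithms. The step-size restriction $\eta \leq 1/L$ together with $\alpha \leq L$ gives $\alpha\eta \in (0,1]$, so the contraction factor $1-\alpha\eta$ lies in $[0,1)$; assuming $\alpha\eta < 1$ (the boundary $\alpha\eta = 1$ being trivial, as the factor then vanishes for $k > \ell$) we have $\log(1-\alpha\eta) < 0$. Dividing the logarithm of the target inequality by the negative number $2\log(1-\alpha\eta)$ reverses the inequality and shows the requirement is equivalent to
\[
k - \ell \geq \frac{\log(C/\epsilon)}{-2\log(1-\alpha\eta)}\,.
\]

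The final step is to replace this exact but awkward threshold by the cleaner sufficient condition in the statement, using the elementary bound $-\log(1-x) \geq x$ valid for $x \in [0,1)$. Taking $x = \alpha\eta$ yields $-\log(1-\alpha\eta) \geq \alpha\eta$, whence
\[
\frac{\log(C/\epsilon)}{-2\log(1-\alpha\eta)} \leq \frac{1}{2\alpha\eta}\log\!\left(\epsilon^{-1} C\right)\,.
\]
Thus any $k \geq \ell + \frac{1}{2\alpha\eta}\log(\epsilon^{-1} C)$ satisfies the exact threshold and hence forces $\MI_{\Phi}(\rho_{0,k}) \leq \epsilon$; unfolding the definition of $C$ recovers the displayed bound. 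I expect no genuine difficulty here, since this is a routine inversion of an exponential rate. The only points needing care are tracking the reversal of the inequality when dividing by $\log(1-\alpha\eta) < 0$, and checking that the logarithmic bound is applied in the direction that makes the stated threshold a (conservative) \emph{sufficient} condition rather than a necessary one.
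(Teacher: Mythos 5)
Your proposal is correct and follows essentially the same route as the paper: the paper's proof applies $\tau \le e^{\tau-1}$ with $\tau = 1-\alpha\eta$ to get $(1-\alpha\eta)^{2(k-\ell)} \le e^{-2\alpha\eta(k-\ell)}$ and then inverts, which is the same elementary inequality as your $-\log(1-x) \ge x$, just packaged slightly differently. Your extra care with the edge cases ($C \le \epsilon$ and $\alpha\eta = 1$) and the sign of $\log(C/\epsilon)$ is sound but not a substantive departure.
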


The condition in Theorem~\ref{thm:ULA_Main} that $\rho_\ell$ satisfy a $\Phi$-Sobolev inequality can be met by choosing $\rho_0$ which satisfies a $\Phi$-Sobolev inequality, for example a strongly log-concave distribution. In this case, $\rho_\ell$ will satisfy a $\Phi$-Sobolev inequality for all $\ell \geq 1$ (consequence of Lemma~\ref{lem:ULAContractionCoefficientAndPSIEvolution}(b)). The fact that strongly log-concave distribution satisfy a $\Phi$-Sobolev inequality is mentioned in Lemma~\ref{lem:PhiSISLC}.
The exponential rate of convergence matches the rate of convergence of $\Phi$-divergence along the ULA~\citep[Theorem~1]{mitra2024fast}.
Also note that the additional smoothness assumption on $\nu$ in Theorem~\ref{thm:ULA_Main} is standard in the analysis of ULA.

For the special case of $\Phi(x) = x \log x$, Theorem~\ref{thm:MI_ULA_regularize} in Appendix~\ref{app:RegularityULA} studies the convergence of mutual information along ULA as a consequence of the regularity properties of the algorithm. Theorem~\ref{thm:MI_ULA_regularize} does not require the distributions along the trajectory to satisfy a $\Phi$-Sobolev inequality and the resulting bound does not depend on $\alpha_{\PSI}(\rho_\ell)$.
The tightness of Theorems~\ref{thm:ULA_Main} and~\ref{thm:MI_ULA_regularize} for the special case of $\Phi(x) = x \log x$ follows by explicitly computing $\MI(\rho_{0,k})$ along the ULA when $\nu$ is a Gaussian distribution; we describe this in Appendix~\ref{app:OU_ULA}.

\subsection{Proof Sketch of Theorem~\ref{thm:ULA_Main}}\label{sec:MainSectionULA_SDPIProofSketch}

In order to analyze the ULA~\eqref{eq:ULA} via SDPIs, it will be helpful to view the update in the space of distributions.
Letting $X_k \sim \rho_k$\,, the update of $\rho_k$ as $X_k$ evolves following~\eqref{eq:ULA} is
\begin{equation}\label{eq:ULA_TwoStepUpdate}
    \rho_{k+1} = \rho_k \bP_{\ULA} = F_\# \rho_k * \N(0, 2\eta I)\,,
\end{equation}
where $F(x) = x - \eta \nabla f (x)$ and $\bP_{\ULA}$ denotes the Markov kernel of ULA.
This two-step interpretation of a pushforward followed by a Gaussian convolution will be crucial in the SDPI analysis. We will denote $\bP_{\ULA}$ by $\bP$ when the Markov chain is clear.

As described briefly in Section~\ref{sec:Intro_SDPI} and in detail in Appendix~\ref{app:SDPI}, studying the $\Phi$-mutual information contraction along a Markov chain involves bounding each of the contraction coefficients along the trajectory. 
The contraction coefficient in terms of $\Phi$-divergence, which we bound for the ULA in Lemma~\ref{lem:ULAContractionCoefficientAndPSIEvolution}(a), are defined in~\eqref{eq:ContractionCoefficientMainText} and in Definition~\ref{def:ContractionCoefficient_PhiDivergence}.
Lemma~\ref{lem:ULAContractionCoefficientAndPSIEvolution}(a) bounds the contraction coefficient along ULA so long as the distribution along the chain satisfies a $\Phi$-Sobolev inequality. Ensuring that successive iterates along ULA satisfy a $\Phi$-Sobolev inequality is given in Lemma~\ref{lem:ULAContractionCoefficientAndPSIEvolution}(b).
We prove Lemma~\ref{lem:ULAContractionCoefficientAndPSIEvolution} in Appendix~\ref{app:PfOfULAContractionCoefficientAndPSIEvolution}.

\begin{lemma}\label{lem:ULAContractionCoefficientAndPSIEvolution}
Consider the ULA kernel $\bP$ defined in~\eqref{eq:ULA_TwoStepUpdate} with $\|F\|_\Lip = \gamma < 1$ and $\eta > 0$.
Let $\rho$ be a distribution that satisfies a $\Phi$-Sobolev inequality with optimal constant $\alpha_{\PSI}(\rho)$\,. Then we have:
\begin{itemize}[noitemsep,topsep=2pt]
    \item[(a)] If $F$ is also bijective, then the contraction coefficient satisfies
    \[
\varepsilon_{\sfD_{\Phi}}(\bP, \rho) \leq \frac{\gamma^2}{\gamma^2 + 2\eta \,\alpha_{\PSI}(\rho)}\,.
\]
\item[(b)] The $\Phi$-Sobolev inequality constants $\alpha_{\PSI}(\rho)$ and $\alpha_{\PSI}(\rho \bP)$ satisfy
\begin{equation}\label{eq:ULAPSIEvolution}
        \alpha_{\PSI}(\rho \bP) \geq \frac{\alpha_{\PSI}(\rho)}{\gamma^2 +  2\eta\,\alpha_{\PSI}(\rho)}\,.
    \end{equation}
\end{itemize}

\end{lemma}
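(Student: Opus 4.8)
The plan is to exploit the two-step factorization in~\eqref{eq:ULA_TwoStepUpdate}: I write $\rho\bP = (F_\#\rho)*\N(0,2\eta I)$ as a pushforward by $F$ followed by the Gaussian convolution channel $\bQ\colon \mu \mapsto \mu * \N(0,2\eta I)$, and handle each step with two structural properties of the $\Phi$-Sobolev constant (to be developed in Appendix~\ref{app:PhiSobolevProperties}): (i) a $\gamma$-Lipschitz pushforward degrades the constant by at most $\gamma^2$, $\alpha_{\PSI}(F_\#\rho) \geq \alpha_{\PSI}(\rho)/\gamma^2$, and a bijective pushforward preserves the $\Phi$-divergence, $\sfD_\Phi(F_\#\mu\dvert F_\#\rho) = \sfD_\Phi(\mu\dvert\rho)$; and (ii) the convolution rule $\alpha_{\PSI}(\sigma*\N(0,2sI)) \geq \alpha_{\PSI}(\sigma)/(1+2s\,\alpha_{\PSI}(\sigma))$ for every $s>0$. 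Since $\rho$ satisfies a $\Phi$-Sobolev inequality by hypothesis, property (i) ensures $\sigma := F_\#\rho$ does too, so (ii) applies to $\sigma$.

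For part (b) I compose these two rules. With $\beta := \alpha_{\PSI}(F_\#\rho)$, applying the convolution rule to $\sigma = F_\#\rho$ at $s=\eta$ gives $\alpha_{\PSI}(\rho\bP) \geq \beta/(1+2\eta\beta)$. Since $x\mapsto x/(1+2\eta x)$ is increasing on $\R_{\geq 0}$, I insert the lower bound $\beta \geq \alpha_{\PSI}(\rho)/\gamma^2$ to obtain $\alpha_{\PSI}(\rho\bP) \geq \frac{\alpha_{\PSI}(\rho)/\gamma^2}{1+2\eta\,\alpha_{\PSI}(\rho)/\gamma^2} = \frac{\alpha_{\PSI}(\rho)}{\gamma^2 + 2\eta\,\alpha_{\PSI}(\rho)}$, which is~\eqref{eq:ULAPSIEvolution}. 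This step uses only the Lipschitz property and not bijectivity, consistent with the weaker hypotheses of (b).

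For part (a), bijectivity of $F$ makes the pushforward step $\Phi$-divergence-preserving, so the contraction coefficient of $\bP$ collapses onto that of $\bQ$ against $\sigma := F_\#\rho$: $\sfD_\Phi(\mu\bP\dvert\rho\bP) = \sfD_\Phi((F_\#\mu)*\N\dvert\sigma*\N) \leq \varepsilon_{\sfD_\Phi}(\bQ,\sigma)\,\sfD_\Phi(F_\#\mu\dvert\sigma) = \varepsilon_{\sfD_\Phi}(\bQ,\sigma)\,\sfD_\Phi(\mu\dvert\rho)$. To bound $\varepsilon_{\sfD_\Phi}(\bQ,\sigma)$ I interpolate along the heat flow: for $s\in[0,\eta]$ set $\mu_s := (F_\#\mu)*\N(0,2sI)$ and $\sigma_s := \sigma*\N(0,2sI)$, both of which solve $\partial_s u = \Delta u$. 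A de Bruijn-type computation — integrating by parts on the two resulting terms and using $\nabla\mu_s - \tfrac{\mu_s}{\sigma_s}\nabla\sigma_s = \sigma_s\,\nabla\tfrac{\mu_s}{\sigma_s}$ — gives $\frac{\D}{\D s}\sfD_\Phi(\mu_s\dvert\sigma_s) = -\FI_\Phi(\mu_s\dvert\sigma_s)$. Combining the $\Phi$-Sobolev inequality for $\sigma_s$ with the convolution rule $\alpha_{\PSI}(\sigma_s) \geq \beta/(1+2s\beta)$ (here $\beta = \alpha_{\PSI}(\sigma)$) yields $\frac{\D}{\D s}\log\sfD_\Phi(\mu_s\dvert\sigma_s) \leq -2\alpha_{\PSI}(\sigma_s) \leq -\frac{2\beta}{1+2s\beta}$, and integrating over $[0,\eta]$ gives $\varepsilon_{\sfD_\Phi}(\bQ,\sigma) \leq 1/(1+2\eta\beta)$. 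Substituting $\beta = \alpha_{\PSI}(F_\#\rho) \geq \alpha_{\PSI}(\rho)/\gamma^2$ produces the claimed bound $\gamma^2/(\gamma^2 + 2\eta\,\alpha_{\PSI}(\rho))$.

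The main obstacle lies in the convolution step shared by both parts. First, the convolution rule (ii) for $\alpha_{\PSI}$ must itself be established; this is the crux, since it is exactly what couples the instantaneous decay rate to the evolving reference $\sigma_s$ and makes $\int_0^\eta \frac{2\beta}{1+2s\beta}\,\D s = \log(1+2\eta\beta)$ telescope into the $1/(1+2\eta\beta)$ factor. Second, the heat-flow de Bruijn identity requires justifying the integration by parts — controlling boundary terms for densities in $\P_{2,\text{ac}}(\R^d)$ and verifying that $\FI_\Phi(\mu_s\dvert\sigma_s)$ stays finite along the flow, where the Gaussian smoothing is essential; the standing assumption $\gamma<1$ then guarantees the pushforward rule yields a genuine (strictly-below-one) contraction. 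I would prove (ii) in Appendix~\ref{app:PhiSobolevProperties} and carry out the full argument in Appendix~\ref{app:PfOfULAContractionCoefficientAndPSIEvolution}.
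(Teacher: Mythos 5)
Your proposal is correct and follows essentially the same route as the paper's proof: decompose $\bP$ into the $\gamma$-Lipschitz pushforward (using bijectivity to make it $\Phi$-divergence-preserving) followed by the Gaussian convolution, bound the convolution step by differentiating $\sfD_\Phi(\mu_s \dvert \sigma_s)$ along the heat flow (the paper's Lemma~\ref{lem:SimultaneousSDE}) and feeding in the $\Phi$-Sobolev constant of $\sigma_s$ via the convolution rule, then integrate to get the $1/(1+2\eta\beta)$ factor; part (b) is the same composition of the pushforward and convolution rules, which are exactly the paper's Lemmas~\ref{lem:PhiSIchangeAlongLipschitzPushforward} and~\ref{lem:PhiSIchangeAlongConvolution} (cited from \citet{chafai2004entropies} rather than re-proved). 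The only cosmetic difference is your time parametrization $s \in [0,\eta]$ with variance $2s$ versus the paper's $t \in [0,2\eta]$ with variance $t$.
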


Theorem~\ref{thm:ULA_Main} then follows by combining both parts of Lemma~\ref{lem:ULAContractionCoefficientAndPSIEvolution} across multiple steps of ULA.

\section{Convergence of $\Phi$-Mutual Information along Proximal Sampler}\label{sec:ProximalMain}

We show the following result on the exponential convergence of $\Phi$-mutual information along the Proximal Sampler~\eqref{eqs:ProximalSampler} for strongly log-concave target distributions, provided an iterate along the Proximal Sampler satisfies a $\Phi$-Sobolev inequality. 
We provide a proof sketch of Theorem~\ref{thm:ProximalMain} in Section~\ref{sec:MainSectionProximal_SDPIProofSketch}, and present the complete proof in Appendix~\ref{app:PfOfProximalMain}.

\begin{theorem}\label{thm:ProximalMain}
    Suppose $\nu^X \propto \exp{(-f)}$ is $\alpha$-strongly log-concave for some $\alpha >0$. 
    Let $X_k \sim \rho_k^X$ evolve following the Proximal Sampler~\eqref{eqs:ProximalSampler} with step-size $\eta >0$ from $X_0 \sim \rho_0^X$, and let the joint law of $(X_i, X_j)$ be $\rho^X_{i,j}$\,.
    If $\rho_\ell^X$ satisfies a $\Phi$-Sobolev inequality with optimal constant $\alpha_{\PSI}(\rho_\ell^X)$ for some $\ell \geq 1$.
    Then for all $k \geq \ell$, we have
    \begin{equation}\label{ineq:PS}
        \MI_{\Phi}(\rho^X_{0,k}) \leq \frac{\MI_{\Phi}(\rho^X_{0,\ell})}{(1 + \eta \min \{ \alpha, \alpha_{\PSI}(\rho^X_\ell) \})^{2(k-\ell)}}\,.
    \end{equation}
\end{theorem}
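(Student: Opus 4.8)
The plan is to mirror the SDPI-based strategy developed for the ULA (Theorem~\ref{thm:ULA_Main}), adapting it to the forward-backward structure of the Proximal Sampler. Writing $\bP$ for the one-step Markov kernel $X_k \mapsto X_{k+1}$ of~\eqref{eqs:ProximalSampler}, the first step is to invoke the general reduction from $\Phi$-mutual information contraction to a product of $\Phi$-divergence contraction coefficients along the trajectory (Lemma~\ref{lem:IndependenceContractionBySDPI}): for every $k \geq \ell$,
\[
\MI_{\Phi}(\rho^X_{0,k}) \;\leq\; \Big(\prod_{i=\ell}^{k-1} \varepsilon_{\sfD_{\Phi}}(\bP, \rho_i^X)\Big)\, \MI_{\Phi}(\rho^X_{0,\ell}),
\]
a product of exactly $k-\ell$ per-step factors. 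It therefore suffices to bound each $\varepsilon_{\sfD_{\Phi}}(\bP,\rho_i^X)$ uniformly by $(1+\eta\min\{\alpha,\alpha_{\PSI}(\rho_\ell^X)\})^{-2}$ for all $i \geq \ell$; substituting this uniform bound and taking the $k-\ell$ factors, each carrying the power $2$, produces the exponent $2(k-\ell)$ in~\eqref{ineq:PS}.

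To bound a single contraction coefficient I would use Lemma~\ref{lem:proximal_ContractionCoefficientBoundAndPSIEvolution}(a). The key is to decompose $\bP$ into its forward step (the Gaussian convolution $\rho^X \mapsto \rho^Y = \rho^X * \N(0,\eta I)$) and its backward step (the RGO kernel $y \mapsto \nu^{X\mid Y = y}$ of~\eqref{eq:RGOdistribution}), and to use submultiplicativity of contraction coefficients under composition together with two distinct mechanisms. The forward step contracts $\Phi$-divergence by a factor controlled by the $\Phi$-Sobolev constant $\alpha_{\PSI}(\rho_i^X)$ of the current iterate, while the backward step contracts by a factor controlled by the strong log-concavity $\alpha$ of $\nu^X$ (since each conditional $\nu^{X\mid Y=y}$ is $(\alpha+\eta^{-1})$-strongly log-concave). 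Combining the two contributions and using the elementary inequality $(1+\eta a)(1+\eta b)\geq (1+\eta\min\{a,b\})^2$ for $a,b\geq 0$ yields the per-step estimate $\varepsilon_{\sfD_{\Phi}}(\bP,\rho_i^X)\leq (1+\eta\min\{\alpha,\alpha_{\PSI}(\rho_i^X)\})^{-2}$.

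The remaining ingredient is to show that the $\Phi$-Sobolev constant does not degrade along the trajectory, so that the per-step floor $\min\{\alpha,\alpha_{\PSI}(\rho_i^X)\}$ can be replaced uniformly by $\min\{\alpha,\alpha_{\PSI}(\rho_\ell^X)\}$. For this I would use the evolution bound of Lemma~\ref{lem:proximal_ContractionCoefficientBoundAndPSIEvolution}(b), which propagates a $\Phi$-Sobolev inequality from $\rho_i^X$ to $\rho_{i+1}^X$ and, because the stationary distribution $\nu^X$ is itself $\alpha$-$\Phi$-Sobolev (Lemma~\ref{lem:PhiSISLC}), ensures $\alpha_{\PSI}(\rho_{i+1}^X) \geq \min\{\alpha,\alpha_{\PSI}(\rho_i^X)\}$. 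A straightforward induction on $i$ starting from the hypothesis at $i=\ell$ then gives $\alpha_{\PSI}(\rho_i^X) \geq \min\{\alpha,\alpha_{\PSI}(\rho_\ell^X)\}$, and hence $\min\{\alpha,\alpha_{\PSI}(\rho_i^X)\} \geq \min\{\alpha,\alpha_{\PSI}(\rho_\ell^X)\}$, for every $i \geq \ell$. Plugging this into the per-step estimate and then into the product completes the argument.

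I expect the main obstacle to be the two halves of Lemma~\ref{lem:proximal_ContractionCoefficientBoundAndPSIEvolution} taken together: establishing the forward-backward contraction estimate of part~(a) with the correct simultaneous dependence on both $\alpha$ and $\alpha_{\PSI}(\rho_i^X)$, and — more delicately — the $\Phi$-Sobolev evolution of part~(b). Controlling how the isoperimetric constant transforms under the Gaussian convolution (which tends to weaken it) and under the proximal backward map (which must compensate) along the \emph{non-stationary} trajectory is precisely the point where strong log-concavity of $\nu^X$ is essential, and where the analysis is genuinely harder than for mixing, which only requires control of the contraction coefficient at the stationary distribution.
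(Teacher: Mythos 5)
Your proposal is correct and follows essentially the same route as the paper: the reduction to a product of per-step contraction coefficients via Lemma~\ref{lem:IndependenceContractionBySDPI}, the per-step bound from Lemma~\ref{lem:proximal_ContractionCoefficientBoundAndPSIEvolution}(a), and the inductive propagation of the $\Phi$-Sobolev constant via Lemma~\ref{lem:proximal_ContractionCoefficientBoundAndPSIEvolution}(b). The only (cosmetic) difference is that you run the induction with the unified invariant $\alpha_{\PSI}(\rho_i^X) \geq \min\{\alpha, \alpha_{\PSI}(\rho_\ell^X)\}$, whereas the paper splits into the two cases $\alpha \leq \alpha_{\PSI}(\rho_\ell^X)$ and $\alpha > \alpha_{\PSI}(\rho_\ell^X)$ and proves the corresponding monotone invariant in each; both yield the same per-step factor $(1+\eta\min\{\alpha,\alpha_{\PSI}(\rho_\ell^X)\})^{-2}$.
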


Theorem~\ref{thm:ProximalMain} implies the following iteration complexity for the Proximal Sampler, stated in Corollary~\ref{cor:PS}. We provide the proof of Corollary~\ref{cor:PS} in Appendix~\ref{app:PSIterationComplexity}.

\begin{corollary}\label{cor:PS}
    Under the same assumptions as Theorem~\ref{thm:ProximalMain} and given any error threshold $\epsilon >0$, the Proximal Sampler~\eqref{eqs:ProximalSampler} outputs a sample $X_k$ such that $\MI_{\Phi}(X_0; X_k) \leq \epsilon$ as long as 
    \[
k \geq \ell + \left[\frac12+ \frac{1}{2\eta \min \{ \alpha, \alpha_{\PSI}(\rho^X_\ell) \}} \right]\log\left( \epsilon^{-1}\MI_{\Phi}(\rho_{0,\ell}^X) \right).
\]
\end{corollary}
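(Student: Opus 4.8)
The plan is to directly invert the exponential bound from Theorem~\ref{thm:ProximalMain}. Writing $\beta := \min\{\alpha, \alpha_{\PSI}(\rho_\ell^X)\} > 0$ for brevity, Theorem~\ref{thm:ProximalMain} gives, for any $k \geq \ell$, that $\MI_\Phi(\rho^X_{0,k}) \leq (1+\eta\beta)^{-2(k-\ell)}\, \MI_\Phi(\rho^X_{0,\ell})$. So I would simply require the right-hand side to be at most $\epsilon$. Rearranging $(1+\eta\beta)^{-2(k-\ell)}\, \MI_\Phi(\rho^X_{0,\ell}) \leq \epsilon$ and taking logarithms, this is equivalent to demanding
\begin{equation*}
    k - \ell \geq \frac{1}{2\log(1+\eta\beta)}\log\left(\epsilon^{-1}\,\MI_\Phi(\rho^X_{0,\ell})\right).
\end{equation*}

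The only nontrivial step will be to replace the factor $1/\log(1+\eta\beta)$ by the cleaner bracketed expression appearing in the corollary. For this I would invoke the elementary inequality $\log(1+u) \geq u/(1+u)$, valid for all $u > 0$, which rearranges to $1/\log(1+u) \leq (1+u)/u = 1 + 1/u$. Applying this with $u = \eta\beta$ yields
\begin{equation*}
    \frac{1}{2\log(1+\eta\beta)} \leq \frac12 + \frac{1}{2\eta\beta},
\end{equation*}
and substituting this upper bound into the displayed threshold shows that the lower bound on $k$ stated in the corollary is sufficient to guarantee $\MI_\Phi(X_0; X_k) \leq \epsilon$.

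I do not expect a genuine obstacle here, since beyond routine algebra the only ingredient is the logarithm bound. That inequality itself follows from a one-line calculus check: the function $g(u) := \log(1+u) - u/(1+u)$ satisfies $g(0) = 0$ and $g'(u) = u/(1+u)^2 \geq 0$ on $[0,\infty)$, so $g$ is nonnegative there. The one point to keep in mind is that the argument uses $\eta\beta > 0$, which holds because $\eta > 0$ and $\beta = \min\{\alpha, \alpha_{\PSI}(\rho_\ell^X)\} > 0$ under the hypotheses of Theorem~\ref{thm:ProximalMain} (in particular $\alpha > 0$ and the $\Phi$-Sobolev constant is positive).
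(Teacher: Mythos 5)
Your proposal is correct and follows essentially the same route as the paper: the paper applies $\tau \le e^{\tau-1}$ with $\tau = 1/(1+\eta\beta)$ to bound the geometric factor by $\exp(-2\eta\beta(k-\ell)/(1+\eta\beta))$ and then inverts, which is exactly equivalent to your use of $\log(1+u)\ge u/(1+u)$. Both yield the identical threshold $\bigl[\tfrac12 + \tfrac{1}{2\eta\beta}\bigr]\log\bigl(\epsilon^{-1}\MI_\Phi(\rho^X_{0,\ell})\bigr)$.
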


The condition in Theorem~\ref{thm:ProximalMain} that $\rho_\ell^X$ satisfy a $\Phi$-Sobolev inequality can be met by choosing a $\rho_0^X$ which satisfies a $\Phi$-Sobolev inequality, such as a strongly log-concave distribution. The fact that when $\rho_0^X$ satisfies a $\Phi$-Sobolev inequality, $\rho_\ell^X$ does as well (for all $\ell \geq 1$) is a consequence of Lemma~\ref{lem:proximal_ContractionCoefficientBoundAndPSIEvolution}(b). Additionally, Lemma~\ref{lem:PhiSISLC} states that strongly log-concave distributions satisfy $\Phi$-Sobolev inequalities.
Note that the rate of convergence of $\Phi$-mutual information matches that of $\Phi$-divergence; see~\citep[Theorem~2]{mitra2024fast}.
Further note that there is no smoothness assumption required in Theorem~\ref{thm:ProximalMain}, since the result is for the ideal Proximal Sampler which assumes access to a perfect RGO. 
Implementing the RGO typically requires smoothness assumptions; see Appendix~\ref{app:ProximalBackground} for a review. 
In Corollary~\ref{cor:ProximalWithRejectionSamplingRGO} in Appendix~\ref{app:ProximalBackground}, we show the expected oracle complexity when we combine the convergence guarantee from Theorem~\ref{thm:ProximalMain} with the standard implementation of RGO via rejection sampling.

For the special case of $\Phi(x) = x \log x$ and $\ell =1$, we are able to bound the initial mutual information $\MI(\rho_{0,1}^X)$ in terms of the variance of $\rho_0$\,. We state this in Appendix~\ref{app:RegularityPS} and present the corresponding iteration complexity in Corollary~\ref{cor:1StepAndInitialBoundProximal}.
The tightness of Theorem~\ref{thm:ProximalMain} for the special case of $\Phi(x) = x \log x$ can be seen by doing explicit calculations for the case when $\nu$ is Gaussian, and we present this in Appendix~\ref{app:OU_PS}.

\subsection{Proof Sketch of Theorem~\ref{thm:ProximalMain}}\label{sec:MainSectionProximal_SDPIProofSketch}

Recall the Proximal Sampler~\eqref{eqs:ProximalSampler}. We denote the Proximal Sampler by $\bP_\prox$, i.e. $\rho^X_k \, \bP_{\prox} = \rho^X_{k+1}$ where $\rho_k^X \coloneqq \law (X_k)$.
The proof of Theorem~\ref{thm:ProximalMain} follows the template mentioned in Section~\ref{sec:Intro_SDPI} (and mentioned in detail in Appendix~\ref{app:SDPI}) where the key step is in bounding the contraction coefficient along the trajectory of the Proximal Sampler. The bound on the contraction coefficient for the Proximal Sampler is presented in Lemma~\ref{lem:proximal_ContractionCoefficientBoundAndPSIEvolution}(a), which holds under the distribution satisfying a $\Phi$-Sobolev inequality. 
To apply Lemma~\ref{lem:proximal_ContractionCoefficientBoundAndPSIEvolution}(a) across multiple steps of the Proximal Sampler, we need to ensure that the $\Phi$-Sobolev inequality assumption is maintained along the trajectory of the chain. This is guaranteed by Lemma~\ref{lem:proximal_ContractionCoefficientBoundAndPSIEvolution}(b), which crucially requires the strong log-concavity of $\nu^X$.
We prove Lemma~\ref{lem:proximal_ContractionCoefficientBoundAndPSIEvolution} in Appendix~\ref{app:PfOfproximal_ContractionCoefficientBoundAndPSIEvolution}.

\begin{lemma}\label{lem:proximal_ContractionCoefficientBoundAndPSIEvolution}
    Suppose $\nu^X \propto \exp{(-f)}$ is $\alpha$-strongly log-concave for some $\alpha >0$, and consider the Proximal Sampler $\bP_{\prox}$~\eqref{eqs:ProximalSampler} with step-size $\eta > 0$ for sampling from $\nu^X$. Let $\rho$ be a distribution that satisfies a $\Phi$-Sobolev inequality with optimal constant $\alpha_{\PSI}(\rho)$\,. 
    Then we have the following:
    \begin{itemize}[noitemsep,topsep=2pt]
        \item[(a)] The contraction coefficient satisfies
    \[
    \varepsilon_{\sfD_{\Phi}}(\bP_{\prox}\,, \rho) \leq \frac{1}{1 + 2\eta \alpha_{\PSI} (\rho) + \eta^2 \alpha \alpha_{\PSI} (\rho) }\,.
    \]
    \item[(b)] The $\Phi$-Sobolev inequality constants $\alpha_{\PSI}(\rho)$ and $\alpha_{\PSI}(\rho \bP_\prox)$ satisfy
    \[
    \frac{1}{\alpha_{\PSI}(\rho \bP_{\prox})} \le \frac{1+\alpha_{\PSI}(\rho) \eta}{\alpha_{\PSI}(\rho) (1 + \alpha \eta)^2} + \frac{\eta}{1 + \alpha \eta}.
    \]
    \end{itemize}

\end{lemma}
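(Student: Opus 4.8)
The plan is to exploit the two half-step structure of the Proximal Sampler, writing $\bP_{\prox} = \bP_f \bP_b$, where $\bP_f$ is the forward step $\sigma \mapsto \sigma * \N(0,\eta I)$ and $\bP_b$ is the backward RGO step that applies the conditional channel $\nu^{X \mid Y}$ from~\eqref{eq:RGOdistribution}. I would prove a contraction-coefficient bound and a $\Phi$-Sobolev evolution bound for each half-step separately, and then combine them. For part~(a) I use the submultiplicativity of contraction coefficients, $\varepsilon_{\sfD_{\Phi}}(\bP_f\bP_b,\rho) \le \varepsilon_{\sfD_{\Phi}}(\bP_f,\rho)\,\varepsilon_{\sfD_{\Phi}}(\bP_b,\rho\bP_f)$, which follows by factoring the defining ratio in~\eqref{eq:ContractionCoefficientMainText} through the intermediate law; for part~(b) I chain the two evolution bounds through $\rho^Y \coloneqq \rho\bP_f$.

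For the forward step, both bounds come from treating $\bP_f$ as a heat flow producing added variance $\eta$. A de Bruijn-type identity along the flow, $\tfrac{\D}{\D t}\sfD_\Phi = -\FI_\Phi$ (the relative-divergence analogue of Lemma~\ref{lem:PhiMutualDeBruijn}, with $\FI_\Phi$ as in~\eqref{eq:PhiFisherInformation}), combined with the $\Phi$-Sobolev inequality of the time-$t$ reference $\rho_t = \rho * \N(0,\cdot)$, whose constant I control via the convolution property $1/\alpha_{\PSI}(\rho * \N(0,tI)) \le 1/\alpha_{\PSI}(\rho) + t$ from Appendix~\ref{app:PhiSobolevProperties}, yields upon integrating in $t$ the forward contraction $\varepsilon_{\sfD_{\Phi}}(\bP_f,\rho) \le 1/(1+\eta\,\alpha_{\PSI}(\rho))$. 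The same convolution property gives the forward evolution $1/\alpha_{\PSI}(\rho^Y) \le 1/\alpha_{\PSI}(\rho) + \eta$.

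The backward step is the crux, and the place where $\alpha$-strong log-concavity of $\nu^X$ is indispensable. Because $\nu^X$ is $\alpha$-SLC, every conditional $\nu^{X\mid Y=y} \propto \exp(-f(x) - \|x-y\|^2/(2\eta))$ is $(\alpha + 1/\eta)$-strongly log-concave \emph{uniformly} in $y$, hence by Lemma~\ref{lem:PhiSISLC} satisfies a $\Phi$-Sobolev inequality with constant $(1+\alpha\eta)/\eta$, while the associated mode map $y \mapsto \prox_{\eta f}(y)$ is $1/(1+\alpha\eta)$-Lipschitz. I would show that $\bP_b$ contracts and transports $\Phi$-Sobolev constants exactly as a $1/(1+\alpha\eta)$-Lipschitz pushforward followed by noise of $\Phi$-Sobolev constant $(1+\alpha\eta)/\eta$ would, namely $\varepsilon_{\sfD_{\Phi}}(\bP_b,\sigma) \le 1/(1 + \eta(1+\alpha\eta)\,\alpha_{\PSI}(\sigma))$ together with
\[
\frac{1}{\alpha_{\PSI}(\sigma \bP_b)} \le \frac{1}{(1+\alpha\eta)^2\,\alpha_{\PSI}(\sigma)} + \frac{\eta}{1+\alpha\eta}.
\]
The Lipschitz pushforward scales the inverse $\Phi$-Sobolev constant by the squared Lipschitz factor $1/(1+\alpha\eta)^2$, and the strongly log-concave conditional fluctuation contributes the additive term $\eta/(1+\alpha\eta)$; feeding the improved constant $(1+\alpha\eta)^2\alpha_{\PSI}(\sigma)$ into a heat-flow/de Bruijn estimate with effective added variance $\eta/(1+\alpha\eta)$ reproduces the contraction bound. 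The main difficulty is that $\nu^{X\mid Y=y}$ is only strongly log-concave, not literally Gaussian, so this ``pushforward plus convolution'' picture is not an exact factorization of $\bP_b$; the uniform strong log-concavity of the conditionals and the Lipschitz mode map are precisely what let me extract the two effects rigorously. Without the $\alpha$-SLC of $\nu^X$ the conditionals would only be $1/\eta$-strongly log-concave, replacing each factor $(1+\alpha\eta)$ by $1$ and erasing the improvement.

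Finally I combine arithmetically. For part~(b), substituting $1/\alpha_{\PSI}(\rho^Y) \le 1/\alpha_{\PSI}(\rho) + \eta$ into the backward evolution bound with $\sigma = \rho^Y$ and using $\tfrac{1}{\alpha_{\PSI}(\rho)} + \eta = \tfrac{1 + \alpha_{\PSI}(\rho)\eta}{\alpha_{\PSI}(\rho)}$ gives exactly the stated right-hand side. For part~(a), I multiply the forward factor $1/(1+\eta\,\alpha_{\PSI}(\rho))$ by the backward factor at $\sigma = \rho^Y$; since this backward factor is decreasing in $\alpha_{\PSI}(\rho^Y)$, the forward evolution lower bound $\alpha_{\PSI}(\rho^Y) \ge \alpha_{\PSI}(\rho)/(1+\eta\,\alpha_{\PSI}(\rho))$ supplies the worst case, and the denominators telescope to $1 + 2\eta\,\alpha_{\PSI}(\rho) + \eta^2\alpha\,\alpha_{\PSI}(\rho)$, which is the claimed bound.
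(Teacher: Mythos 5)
Your decomposition $\bP_{\prox}=\bP_{\prox}^{+}\bP_{\prox}^{-}$, the submultiplicativity of contraction coefficients, the forward-step analysis (heat flow, de Bruijn identity, convolution property of $\alpha_{\PSI}$), and the final arithmetic all match the paper exactly, and the intermediate bounds you posit for the backward step --- contraction factor $1/(1+\eta(1+\alpha\eta)\,\alpha_{\PSI}(\sigma))$ and evolution $1/\alpha_{\PSI}(\sigma\bP_{\prox}^{-})\le 1/((1+\alpha\eta)^{2}\alpha_{\PSI}(\sigma))+\eta/(1+\alpha\eta)$ --- are precisely what the paper's Lemmas~\ref{lem:ContractionCoefficientBound_BackHeatFlow} and~\ref{lem:PhiSI_Evolution_BackwardHeatFlow} establish (the former with $T=t=\eta$). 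So the skeleton is right and the numbers telescope correctly.

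The gap is in the backward step itself, which is the entire technical content of the lemma. You assert that $\bP_{\prox}^{-}$ ``contracts and transports $\Phi$-Sobolev constants exactly as a $1/(1+\alpha\eta)$-Lipschitz pushforward followed by noise'' would, justify this by the uniform $(\alpha+1/\eta)$-strong log-concavity of the conditionals $\nu^{X\mid Y=y}$ and the Lipschitzness of the proximal map, and then concede that the pushforward-plus-convolution picture ``is not an exact factorization of $\bP_{b}$'' while claiming these two facts ``let me extract the two effects rigorously'' --- without saying how. There is no general principle by which a Markov kernel with uniformly strongly log-concave conditionals and a Lipschitz mode map inherits the $\Phi$-Sobolev transport behavior of the corresponding ``Gaussian surrogate''; the output $\sigma\bP_{\prox}^{-}=\int\nu^{X\mid Y=y}\,\sigma(\D y)$ is a mixture, not a pushforward convolved with a fixed noise law, and mixtures do not in general preserve $\Phi$-Sobolev constants in this clean multiplicative-plus-additive way. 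The missing idea is the time-reversal representation: the RGO step is realized by running the backward heat flow SDE $\D Y_{t}=\nabla\log(\nu^{X}*\N_{\eta-t})(Y_{t})\,\D t+\D W_{t}$ for time $\eta$, whose drift at time $t$ is the score of $\nu^{X}*\N(0,(\eta-t)I)$, an $\alpha/(1+(\eta-t)\alpha)$-strongly log-concave law. Discretizing this SDE makes each infinitesimal step \emph{exactly} a Lipschitz pushforward (with factor $1-\eta'\beta_{k}$, $\beta_{k}=\alpha/(1+\alpha(T-\eta'k))$) followed by a Gaussian convolution; the products and sums telescope in the continuum limit to the squared factor $(1-\alpha t/(1+\alpha T))^{2}=1/(1+\alpha\eta)^{2}$ and the additive term $\eta/(1+\alpha\eta)$ (Lemma~\ref{lem:PhiSI_Evolution_BackwardHeatFlow}), and feeding this time-dependent $\Phi$-Sobolev constant into the simultaneous-evolution identity of Lemma~\ref{lem:SimultaneousSDE} and integrating via Gr\"onwall gives the backward contraction coefficient (Lemma~\ref{lem:ContractionCoefficientBound_BackHeatFlow}). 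Without this SDE realization, your ``extract the two effects rigorously'' step has no proof, so the argument as written does not go through.
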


Suitably combining both parts of Lemma~\ref{lem:proximal_ContractionCoefficientBoundAndPSIEvolution} across multiple steps of the Proximal Sampler proves Theorem~\ref{thm:ProximalMain}, which we present in Appendix~\ref{app:PfOfProximalMain}.

%%%%%%%%
\subsubsection{Proof Sketch of Lemma~\ref{lem:proximal_ContractionCoefficientBoundAndPSIEvolution}}

Recall that the Proximal Sampler~\eqref{eqs:ProximalSampler} is composed of a forward step and a backward step. 
We denote by $\bP_{\prox}^+$ and $\bP_{\prox}^-$ the Markov kernel corresponding to the forward and the backward step, respectively, so we can write the Proximal Sampler as the composition $\bP_{\prox} = \bP_{\prox}^+ \bP_{\prox}^-$.

We use the SDE interpretations of the forward step $\bP^+_{\prox}$ and the backward step $\bP^-_{\prox}$ to control the contraction coefficient and the evolution of the $\Phi$-Sobolev constant along each step.
We combine these estimates to to prove the estimates for the Proximal Sampler claimed in Lemma~\ref{lem:proximal_ContractionCoefficientBoundAndPSIEvolution}.

\paragraph{Forward Step:}
Suppose we start from $X_0 \sim \rho_0^X$. 
Along the forward step of the Proximal Sampler~\eqref{eqs:ProximalSampler}, $Y_0 \mid X_0 \sim \N(X_0, \eta I)$, so in particular, $\rho_0^Y = \rho_0^X * \N(0, \eta I)$.
Therefore, the action of the forward step $\bP^+_{\prox}$ is via a Gaussian convolution: 
$\rho \bP^+_{\prox} = \rho *\N(0, \eta I)$, which can be interpreted as the solution to the heat flow (generated by the Brownian motion SDE $\D X_t = \D W_t$) at time $\eta > 0$.

\paragraph{Backward Step:}
For the backward step $\bP^-_{\prox}$, it will be helpful to think of the corresponding SDE as the time reversal of the forward step SDE, i.e. the time reversal of the heat flow, which is known as the \emph{backward heat flow}; see~\citep{chen2022improved} and~\citep[Chapter~8.3]{chewi2023log}.

Fixing a step-size $\eta >0$, we have along the forward step ($\D X_t = \D W_t$) that if $X_0 \sim \nu^X$, then $X_{\eta} \sim \nu^Y$. The backward heat flow SDE is defined by
\begin{equation}\label{eq:BackwardHeatFlow}
    \D Y_t = \nabla \log{(\nu^X * \N_{\eta -t})}(Y_t) \D t + \D W_t\,.
\end{equation}
By construction, if we start the SDE~\eqref{eq:BackwardHeatFlow} from $Y_0 \sim \mu_0 = \nu^Y$, then for any $t \in [0,\eta]$, the distribution of $Y_t \sim \mu_t$ along~\eqref{eq:BackwardHeatFlow} is given by $\mu_t = \nu^X * \N_{\eta-t}$, and in particular, at time $t = \eta$, $Y_\eta \sim \mu_\eta = \nu^X$.
For the Proximal Sampler, we start the backward SDE~\eqref{eq:BackwardHeatFlow} from $Y_0 \sim \rho_0^Y$, to obtain $X_1 := Y_\eta \sim \rho_1^X$.

We note that computing the contraction coefficient for the backward step (Lemma~\ref{lem:ContractionCoefficientBound_BackHeatFlow}) and analyzing the evolution of the $\Phi$-Sobolev constant along the backward step (Lemma~\ref{lem:PhiSI_Evolution_BackwardHeatFlow}) is challenging due to the time-varying drift in the backward step SDE~\eqref{eq:BackwardHeatFlow}, and it is the key difficulty when studying the Proximal Sampler.
We provide the details of the computations above in Appendix~\ref{app:ProximalSampler}.

\section{Discussion}
We study the convergence of $\Phi$-mutual information along the Langevin dynamics~\eqref{eq:LangevinDynamics}, ULA~\eqref{eq:ULA}, and Proximal Sampler~\eqref{eqs:ProximalSampler} assuming the strong log-concavity of $\nu$. 
Our primary proof strategy is based on SDPIs and we explain how studying the contraction of information along a Markov chain requires controlling the SDPI contraction coefficients along the trajectory. 
We require the strong log-concavity of $\nu$ as this implies that the distributions along the trajectory satisfy $\Phi$-Sobolev inequalities, under which the contraction coefficients are strictly less than $1$.

We now mention some interesting future directions. The SDPI approach, the direct time derivative approach for continuous-time, as well as the regularity approach for the Langevin dynamics and ULA all require the strong log-concavity assumption on $\nu$. This is distinct from mixing time results where we have a rich understanding beyond strong log-concavity and under only isoperimetric assumptions on $\nu$ such as a log-Sobolev inequality, a Poincaré inequality, or in general a $\Phi$-Sobolev inequality.
Therefore, it would be interesting to explore if the convergence of $\Phi$-mutual information, perhaps even for specific $\Phi$, can be proven under weaker assumptions on $\nu$. 
Additionally, we focus here on the (overdamped) Langevin dynamics and its discretizations. Extending these results to other Markov chains such as the underdamped Langevin dynamics and Hamiltonian Monte Carlo would be interesting to explore.
Finally, just as the Langevin dynamics is the Wasserstein gradient flow for the relative entropy functional, one can also study the gradient flow of $\Phi$-mutual information directly and it would be interesting to explore algorithmic implications of such a flow.

\acks{The authors thank Vishwak Srinivasan for helpful comments, and to Sekhar Tatikonda and Sinho Chewi for feedback on an earlier version of the paper.
A.W. and S.M. are supported by NSF awards CCF-2403391 and CCF-2443097.
}

\newpage
\appendix

%%%%%%%%%%%%%%%%%%%%%%
\section{Additional Related Works}\label{sec:Intro_RelatedWork}

The Langevin Markov chains we consider in this paper are widely used for continuous-space sampling; see~\citep{chewi2023log} for an overview.
The mixing time of the Langevin dynamics in various divergences has been studied in many works, including~\citep{chafai2004entropies,  villani2009optimal, bolley2010phi, achleitner2015large, dolbeault2018varphi, cao2019exponential}.
We review the convergence guarantee of the $\Phi$-divergence along the Langevin dynamics under a $\Phi$-Sobolev inequality in Appendix~\ref{app:LangevinDynamicsBackground}.
The mixing time of the ULA has been studied in~\citep{roberts1996exponential, roberts1998optimal, dalalyan2017theoretical, dalalyan2017further, durmus2017nonasymptotic, cheng2018convergence, durmus2019analysis, VW19, chewietal2022lmcpoincare, AltschulerTalwar23, mitra2024fast}, and the mixing time of the proximal sampler has been studied in~\citep{lee2021structured, chen2022improved, liang2022proximal, liang2023a, yuan2023class,fan2023improved, kook2024inandout, mitra2024fast}.

The family of $\Phi$-divergences~\citep{polyanskiy2025information, sason2016f} is broad and includes many applications in addition to mixing time of Markov chains~\citep{chafai2004entropies, raginsky2016strong, mitra2024fast}. 
They have been used for hypothesis and distribution testing~\citep{pensia2024sample, gyorfi2002asymptotic, gretton2008nonparametric}, reinforcement learning~\citep{ho2022robust, panaganti2024model}, neuroscience~\citep{nemenman2004entropy, belitski2008local}, and differential privacy~\citep{asoodeh2021local, zamanlooy2024mathrm}, among other applications.
The induced $\Phi$-mutual informations have also been used in varied applications such as density estimation~\citep{goldfeld2019estimating, zhang2021convergence}, contrastive learning~\citep{lu24micl}, generalization~\citep{esposito2020robust}, and investment and portfolio theory~\citep{erkip1998efficiency, anantharam2013maximal, raginsky2016strong}.

SDPIs have been popular for proving mixing time of Markov chains~\citep{raginsky2016strong} and other general networks and processes~\citep{polyanskiy2017strong}. They have been frequent in the context of Langevin-type Markov chains as well, although much of the connection between SDPIs and mixing times there has been implicit.~\citet{VW19} use SDPIs to study the mixing time of ULA in Rényi divergence,~\citet{chen2022improved} use them for the proximal sampler,~\cite{yuan2023class} use them for the proximal sampler on graphs, and recently,~\citet{mitra2024fast} use them explicitly to study the mixing time of the ULA and proximal sampler in $\Phi$-divergence.

Identically distributed and dependent random variables are also studied more generally under \emph{mixing}~\citep{bradley1986basic, bradley_mixing}, where different definitions of dependency between sigma-algebras such as $\alpha$, $\beta$, and $\rho$-mixing~\citep[(1.1) to (1.5)]{bradley_mixing} are compared~\citep[(1.11) to (1.18)]{bradley_mixing} and studied.
In this paper, we focus specifically on data coming from Langevin-type Markov chains, and on $\Phi$-mutual information as our functional to measure dependency. 
Additionally, we do not assume the data to be stationary (i.e. identically distributed), which is helpful for modern Markov Chain Monte Carlo (MCMC) with short and unmixed chains~\citep{margossian2023many, margossian2024nested}.

The behaviors of mutual information along the evolution of a stochastic process have attracted interests in the information theory literature.
Along the Gaussian channel or the heat flow, the rate of change of mutual information is related to the minimum mean-square error (MMSE), known as the I-MMSE relationship~\citep{guo2005mutual}; this relationship can be generalized to the Poisson channel~\citep{ atar2010mutual} and to the family of \textit{Fokker-Planck} channels induced by a stochastic process driven by a Brownian motion~\citep{WJL17}, or the jump-diffusion process~\citep{fan2025differential}.
The \textit{convexity} of how the mutual information is decreasing has been studied along the heat flow~\citep{WJ18a}, the Ornstein-Uhlenbeck process~\citep{WJ18b}, and the general Fokker-Planck channel~\citep{zou2025convexity}.
In this work, we quantify the rate of decrease of the mutual information along the channel induced by the Langevin dynamics and its discrete-time implementations, the ULA and the proximal sampler.

%%%%%%%%%%%%%%%%%%%%%%
\section{Examples of $\Phi$-divergences}\label{app:ExamplesOfPhiDiv}

\begin{center}
\begin{table}[h!]
\begin{tabular}{cccc}
 \bm{$\Phi (x)$} & \bm{$\sfD_{\Phi}(\mu \dvert \nu)$} & \bm{$\sfD_{\Phi}(\mu \dvert \nu)$} \textbf{name}  & \textbf{\bm{$\Phi$}-Sobolev inequality} \\ 
 \\ \hline\\
 $x \log x$ & $\bigintsss \D \mu \log \frac{\D \mu}{\D \nu}$ &  KL divergence & log-Sobolev inequality \\
  \\
 $(x-1)^2$ & $\bigintsss \frac{(\D \mu - \D \nu)^2}{\D \nu}$ & chi-squared divergence & Poincaré inequality\\
 \\
 $\frac{1}{2}(\sqrt{x}-1)^2$ & $\frac{1}{2} \bigintsss (\sqrt{\D \mu} - \sqrt{\D \nu})^2$ & squared Hellinger distance & -- \\
 \\
 $\frac{1}{2}|x-1|$ & $\frac{1}{2}\bigintsss |\D \mu - \D \nu | $ & TV distance & -- \\
 \\
 $-\log x$ & $\bigintsss \D \nu \log \frac{\D \nu}{\D \mu}$ &  reverse KL divergence & -- \\
 \\
 $\frac{1}{x}-x$ & $2+\bigintsss \frac{(\D \mu - \D \nu)^2}{\D \mu}$ & reverse chi-squared divergence & --

\end{tabular}
\caption{Common $\Phi$ functions along with corresponding $\Phi$-divergences (Definition~\ref{def:PhiDivergence}) and $\Phi$-Sobolev inequalities (Definition~\ref{def:PhiSobolevInequality}).\label{table:PhiExamples}}
\end{table}
\end{center}

Table~\ref{table:PhiExamples} includes examples of common $\Phi$-divergences. Further examples include Jensen-Shannon divergence and Le Cam divergence and can be found in~\citet[Chapter~7]{polyanskiy2025information} and~\citet{sason2016f}.

\section{Properties of $\Phi$-Sobolev Inequalities}\label{app:PhiSobolevProperties}

Recall the $\Phi$-Sobolev inequality defined in Definition~\ref{def:PhiSobolevInequality}. 
Note that the inequality~\eqref{eq:PhiSI_DistributionBased} is equivalent to saying that for all smooth functions $g : \R^d \to \R_{\ge 0}$ with $\E_\nu[g] = 1$,
    \begin{equation*}\label{eq:PhiSI_FunctionBased}
        2 \alpha \, \Ent_{\Phi}^{\nu} (g) \leq \Dir_\Phi^\nu(g)\,,
    \end{equation*}
    where
    \[
    \Ent_{\Phi}^{\nu} (g) \coloneqq \E_{\nu} [\Phi (g)] - \Phi (\E_{\nu}[g]) \hspace{0.5cm}\text{and}\hspace{0.5cm} \Dir_\Phi^\nu(g) := \E_\nu\left[\|\nabla g\|^2 \Phi''(g)\right]\,.
    \]
    This can be seen by taking $g$ to be the density function of $\mu$ with respect to $\nu$.

Further recall the optimal $\Phi$-Sobolev constant defined in~\eqref{eq:PhiSI_Constant}.
Understanding how this constant evolves along the various operations that constitute the Markov chains we study such as pushforward and convolution is crucial in our approach.
The following properties from~\cite{chafai2004entropies} describe how the $\Phi$-Sobolev inequality constant evolves along these operations.

\begin{lemma}\cite[Remark~7]{chafai2004entropies}\label{lem:PhiSIchangeAlongLipschitzPushforward}
    Assume $\nu$ satisfies a $\Phi$-Sobolev inequality with optimal constant $\alpha_{\PSI}(\nu)$.
    Let $T : \R^d \to \R^d$ be a $\gamma$-Lipschitz map.
    Then the pushforward $ \tilde \nu = T_{\#}\nu$ satisfies a $\Phi$-Sobolev inequality with optimal constant 
    \begin{equation}\label{ineq:pushforward}
        \alpha_{\PSI}(\tilde \nu) \ge \frac{\alpha_{\PSI}(\nu)}{\gamma^2}\,.
    \end{equation}
\end{lemma}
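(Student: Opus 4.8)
The plan is to reduce the $\Phi$-Sobolev inequality for the pushforward $\tilde\nu = T_\#\nu$ to the one for $\nu$ by pulling test functions back through $T$. Using the function-based form of the inequality recorded at the start of this appendix, it suffices to show that for every smooth $g : \R^d \to \R_{\geq 0}$ with $\E_{\tilde\nu}[g] = 1$ one has $2(\alpha_{\PSI}(\nu)/\gamma^2)\,\Ent_\Phi^{\tilde\nu}(g) \leq \Dir_\Phi^{\tilde\nu}(g)$. First I would set $h \coloneqq g \circ T$ and observe that, by the change-of-variables identity $\E_{\tilde\nu}[\psi] = \E_\nu[\psi \circ T]$ valid for any measurable $T$ (no bijectivity is needed), the normalization $\E_{\tilde\nu}[g]=1$ becomes $\E_\nu[h]=1$, so $h$ is an admissible test function for $\nu$.

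The two quantities to compare are the entropy and the Dirichlet energy. For the entropy, the same change of variables gives $\E_{\tilde\nu}[\Phi(g)] = \E_\nu[\Phi(h)]$ and $\E_{\tilde\nu}[g] = \E_\nu[h]$, so the entropy is preserved exactly: $\Ent_\Phi^{\tilde\nu}(g) = \Ent_\Phi^\nu(h)$. For the Dirichlet energy I would use the chain rule $\nabla h(x) = DT(x)^\top (\nabla g)(T(x))$, where $DT$ is the Jacobian of $T$; since $T$ is $\gamma$-Lipschitz, $\|DT(x)\|_{\mathrm{op}} \leq \gamma$ almost everywhere, whence $\|\nabla h(x)\|^2 \leq \gamma^2 \,\|(\nabla g)(T(x))\|^2$. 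Because $\Phi'' \geq 0$ by convexity, multiplying by $\Phi''(h(x)) = \Phi''(g(T(x)))$, integrating against $\nu$, and changing variables once more yields $\Dir_\Phi^\nu(h) \leq \gamma^2\, \Dir_\Phi^{\tilde\nu}(g)$.

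Combining these, I would apply the $\Phi$-Sobolev inequality for $\nu$ to $h$, namely $2\alpha_{\PSI}(\nu)\,\Ent_\Phi^\nu(h) \leq \Dir_\Phi^\nu(h)$, and substitute the two relations above to obtain $2\alpha_{\PSI}(\nu)\,\Ent_\Phi^{\tilde\nu}(g) \leq \gamma^2\,\Dir_\Phi^{\tilde\nu}(g)$, i.e. the desired inequality with constant $\alpha_{\PSI}(\nu)/\gamma^2$. Since this holds for every admissible $g$, the value $\alpha_{\PSI}(\nu)/\gamma^2$ is a valid $\Phi$-Sobolev constant for $\tilde\nu$, and optimality of $\alpha_{\PSI}(\tilde\nu)$ gives $\alpha_{\PSI}(\tilde\nu) \geq \alpha_{\PSI}(\nu)/\gamma^2$.

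The main technical obstacle is that $T$ is merely Lipschitz, so $h = g \circ T$ need not be smooth and the chain rule is available only almost everywhere. I would resolve this via Rademacher's theorem (a $\gamma$-Lipschitz $T$ is differentiable a.e., with $\|DT\|_{\mathrm{op}} \leq \gamma$ on its set of differentiability) together with the a.e. chain rule for the composition of a $C^1$ function with a Lipschitz map; since all integrals are taken against $\nu \ll$ Lebesgue, the null set of non-differentiability is harmless. A short approximation argument then extends the function-based inequality from the smooth test functions appearing in the definition to the Lipschitz function $h$, completing the proof.
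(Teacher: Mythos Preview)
The paper does not supply its own proof of this lemma: it is quoted directly from \cite[Remark~7]{chafai2004entropies}. Your argument is the standard one (pull back the test function through $T$, use the change-of-variables identity to match the entropies exactly, and bound the Dirichlet energy via the chain rule and the Lipschitz bound $\|DT\|_{\mathrm{op}}\le\gamma$), and it is correct; your treatment of the regularity issue via Rademacher's theorem is also appropriate.
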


The next lemma describes the change of the $\PSI$ constant after convolution.

\begin{lemma}\cite[Corollary~3.1]{chafai2004entropies}\label{lem:PhiSIchangeAlongConvolution}
    Assume $\mu$ and $\nu$ satisfy a $\Phi$-Sobolev inequality with optimal constants $\alpha_{\PSI}(\mu)$ and $\alpha_{\PSI}(\nu)$ respectively. 
    Then the convolution $\mu \ast \nu$ satisfies the $\Phi$-Sobolev inequality with constant
    \begin{equation}\label{ineq:convolution}
        \frac{1}{\alpha_{\PSI}(\mu * \nu)} \le \frac{1}{\alpha_{\PSI}(\mu)} + \frac{1}{\alpha_{\PSI}(\nu)}\,.
    \end{equation}
\end{lemma}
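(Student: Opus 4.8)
The plan is to work with the equivalent function-based form of the $\Phi$-Sobolev inequality and to lift the problem to the product space $\R^d \times \R^d$, where the convolution $\mu * \nu$ appears as the law of the sum under $\mu \otimes \nu$. Concretely, fix a smooth $g : \R^d \to \R_{\ge 0}$; it suffices to show
\[
2\alpha\,\Ent_\Phi^{\mu * \nu}(g) \le \Dir_\Phi^{\mu * \nu}(g), \qquad \text{where } \frac{1}{\alpha} = \frac{1}{\alpha_{\PSI}(\mu)} + \frac{1}{\alpha_{\PSI}(\nu)},
\]
since the optimal constant $\alpha_{\PSI}(\mu * \nu)$ is then at least $\alpha$, which is exactly~\eqref{ineq:convolution}. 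First I would introduce the lift $G : \R^d \times \R^d \to \R_{\ge 0}$, $G(x,y) = g(x+y)$, and record two elementary identities: since $X + Y \sim \mu * \nu$ when $(X,Y) \sim \mu \otimes \nu$, the $\Phi$-entropy is preserved, $\Ent_\Phi^{\mu \otimes \nu}(G) = \Ent_\Phi^{\mu * \nu}(g)$; and because $G$ depends on $(x,y)$ only through $x+y$, the two block gradients coincide, $\nabla_x G(x,y) = \nabla_y G(x,y) = (\nabla g)(x+y)$. This gradient coincidence is the structural fact that ultimately produces the sharp harmonic-sum constant.

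The engine of the proof is the subadditivity (tensorization) of the $\Phi$-entropy under product measures, which holds for the class of $\Phi$ we consider following~\cite{chafai2004entropies}:
\[
\Ent_\Phi^{\mu \otimes \nu}(G) \le \E_{\nu}\big[\Ent_\Phi^{\mu}(G(\cdot, y))\big] + \E_{\mu}\big[\Ent_\Phi^{\nu}(G(x, \cdot))\big].
\]
I would then apply the $\Phi$-Sobolev inequality of $\mu$ to each slice $G(\cdot, y)$ and that of $\nu$ to each slice $G(x, \cdot)$; here I use that the inequality in the $\Ent_\Phi$/$\Dir_\Phi$ form is valid for all nonnegative functions (not only the mean-one ones), which is what licenses applying it to slices that are not normalized. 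After an application of Fubini this bounds the right-hand side by
\[
\frac{1}{2\alpha_{\PSI}(\mu)} \E_{\mu \otimes \nu}\big[\|\nabla_x G\|^2 \Phi''(G)\big] + \frac{1}{2\alpha_{\PSI}(\nu)} \E_{\mu \otimes \nu}\big[\|\nabla_y G\|^2 \Phi''(G)\big].
\]

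To finish, I would substitute the gradient identity $\|\nabla_x G\|^2 = \|\nabla_y G\|^2 = \|(\nabla g)(x+y)\|^2$ and push the product measure forward along the sum map, so that each expectation collapses to $\E_{\mu * \nu}\big[\|\nabla g\|^2 \Phi''(g)\big] = \Dir_\Phi^{\mu * \nu}(g)$; combined with the entropy identity this gives $\Ent_\Phi^{\mu * \nu}(g) \le \tfrac12\big(\tfrac{1}{\alpha_{\PSI}(\mu)} + \tfrac{1}{\alpha_{\PSI}(\nu)}\big)\Dir_\Phi^{\mu * \nu}(g)$, as claimed. The hard part is not this bookkeeping but the subadditivity of the $\Phi$-entropy: this is the nontrivial structural inequality (valid for admissible $\Phi$, e.g.\ those with $1/\Phi''$ concave) that encodes why independent noise sources contribute additively in the inverse constant. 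It is worth contrasting this route with the cruder one of writing $\mu * \nu$ as the pushforward of $\mu \otimes \nu$ under the $\sqrt{2}$-Lipschitz sum map and combining tensorization with Lemma~\ref{lem:PhiSIchangeAlongLipschitzPushforward}: that yields only $\alpha_{\PSI}(\mu * \nu) \ge \tfrac12 \min\{\alpha_{\PSI}(\mu), \alpha_{\PSI}(\nu)\}$, which is strictly weaker, and it is precisely the coincidence $\nabla_x G = \nabla_y G$ that upgrades this to the sharp harmonic-sum bound.
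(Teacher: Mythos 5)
The paper offers no proof of this lemma: it is imported wholesale from \citet{chafai2004entropies}, so there is no in-paper argument to compare against. Your tensorization proof is the standard route to the convolution property (and, to the best of my knowledge, essentially the one in the cited source), and the bookkeeping is right: the lift $G(x,y)=g(x+y)$ preserves $\Phi$-entropy because both $\E[\Phi(G)]$ and $\Phi(\E[G])$ depend only on the pushforward of $\mu\otimes\nu$ under the sum map; subadditivity splits the entropy into two slice entropies; each slice inequality contributes $\tfrac{1}{2\alpha_{\PSI}(\cdot)}$ times a block Dirichlet form; and the coincidence $\nabla_x G=\nabla_y G=(\nabla g)(x+y)$ collapses both blocks to $\Dir_\Phi^{\mu*\nu}(g)$, yielding the harmonic-sum constant. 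Your closing contrast with the cruder $\sqrt{2}$-Lipschitz pushforward route, and why it only gives $\tfrac12\min\{\alpha_{\PSI}(\mu),\alpha_{\PSI}(\nu)\}$, is also accurate.

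That said, two inputs are asserted rather than established, and both genuinely fail for the paper's blanket class of $\Phi$ (merely twice differentiable, strictly convex, $\Phi(1)=0$). First, subadditivity of $\Ent_\Phi$ over product measures is not free: it requires an admissibility condition such as joint convexity of $(u,v)\mapsto \Phi''(u)v^2$ (equivalently, concavity of $1/\Phi''$). You flag this, but it means the lemma as invoked here silently restricts the admissible $\Phi$. Second, and more subtly, Definition~\ref{def:PhiSobolevInequality} and its functional restatement in Appendix~\ref{app:PhiSobolevProperties} only quantify over $g$ with $\E_\nu[g]=1$, while your slices $G(\cdot,y)$ have arbitrary means. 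For $g$ with mean $m$, setting $h=g/m$ shows that the unnormalized inequality $2\alpha\,\Ent_\Phi^\nu(g)\le\Dir_\Phi^\nu(g)$ is a density-form Sobolev inequality for the rescaled convex function $\Psi_m(x)=\Phi(mx)-\Phi(m)-m\Phi'(m)(x-1)$ rather than for $\Phi$ itself; the two coincide up to a common factor for $\Phi(x)=x\log x$ and $\Phi(x)=(x-1)^2$ by homogeneity, but not in general. So the argument is correct in the function-based framework where the cited Corollary~3.1 actually lives, but to close it under the paper's density-based definition of $\alpha_{\PSI}$ you would need to either prove the unnormalized form from the normalized one for the relevant $\Phi$, or take the function-based inequality as the standing hypothesis.
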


The following lemma tells us that when $\nu$ is $\alpha$-strongly log-concave, it also satisfies a $\Phi$-Sobolev inequality with the same constant.

\begin{lemma}\cite[Corollary~2.1]{chafai2004entropies}\label{lem:PhiSISLC}
    If $\nu$ is $\alpha$-strongly log-concave for some $\alpha > 0$, then $\nu$ satisfies a $\Phi$-Sobolev inequality with constant 
    $$\alpha_{\PSI}(\nu) \ge \alpha \,.$$
\end{lemma}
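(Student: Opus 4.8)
The plan is to prove the equivalent function-form inequality $2\alpha\,\Ent_\Phi^\nu(g)\le \Dir_\Phi^\nu(g)$ for all smooth $g\ge 0$ (the reformulation recorded in Appendix~\ref{app:PhiSobolevProperties}) by a semigroup interpolation argument in the spirit of Bakry--Émery. Introduce the Langevin semigroup $(P_t)_{t\ge 0}$ with generator $\mathcal{L}=\Delta-\langle\nabla f,\nabla\rangle$, which is reversible with respect to $\nu$ and has carré du champ $\Gamma(h)=\|\nabla h\|^2$. The point where the hypothesis enters is that $\alpha$-strong log-concavity, i.e.\ $\nabla^2 f\succeq\alpha I$, is exactly the curvature condition
\[
\Gamma_2(h):=\tfrac12\mathcal{L}\Gamma(h)-\Gamma(h,\mathcal{L}h)=\|\nabla^2 h\|_{\HS}^2+\langle\nabla h,\nabla^2 f\,\nabla h\rangle\ge\alpha\,\Gamma(h).
\]

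First I would establish a de Bruijn-type identity for the $\Phi$-entropy. Setting $\Lambda(t):=\Ent_\Phi^\nu(P_t g)$, invariance of $\nu$ gives $\E_\nu[P_t g]=\E_\nu[g]$, and the exponential ergodicity supplied by strong log-concavity gives $P_t g\to\E_\nu[g]$, hence $\Lambda(\infty)=0$. Differentiating and integrating by parts via $\E_\nu[h\,\mathcal{L}k]=-\E_\nu[\langle\nabla h,\nabla k\rangle]$ with $h=\Phi'(P_t g)$ and $k=P_t g$ yields $\Lambda'(t)=-\E_\nu[\Phi''(P_t g)\,\Gamma(P_t g)]=-\Dir_\Phi^\nu(P_t g)$, so that
\[
\Ent_\Phi^\nu(g)=\int_0^\infty \Dir_\Phi^\nu(P_t g)\D t.
\]

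The crux is then to show that the $\Phi$-Dirichlet form decays at rate $2\alpha$, namely $\frac{\D}{\D t}\Dir_\Phi^\nu(P_t g)\le -2\alpha\,\Dir_\Phi^\nu(P_t g)$, which gives $\Dir_\Phi^\nu(P_t g)\le e^{-2\alpha t}\Dir_\Phi^\nu(g)$; integrating in $t$ then produces $\Ent_\Phi^\nu(g)\le\frac{1}{2\alpha}\Dir_\Phi^\nu(g)$, i.e.\ $\alpha_{\PSI}(\nu)\ge\alpha$. Writing $u=P_t g$ and differentiating $\Dir_\Phi^\nu(u)=\E_\nu[\Phi''(u)\,\Gamma(u)]$, the diffusion integration-by-parts rules express $-\tfrac12\frac{\D}{\D t}\Dir_\Phi^\nu(u)$ as a weighted $\Gamma_2$ functional whose pure second-order part is $\E_\nu[\Phi''(u)\,\Gamma_2(u)]$, controlled from below by $\alpha\,\Dir_\Phi^\nu(u)$ through the curvature bound above, together with correction terms carrying the higher derivatives $\Phi'''$, $\Phi''''$ and the iterated carré $\Gamma(u,\Gamma(u))$.

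This last point is the main obstacle: strong log-concavity delivers only the curvature lower bound on the $\Gamma_2$ term, and one must still argue that the correction terms do not spoil the sign. This is precisely where an admissibility hypothesis on $\Phi$ is used (the class considered by~\citet{chafai2004entropies}, to which all the examples in Table~\ref{table:PhiExamples} belong): it guarantees that the correction terms assemble, together with the $\Gamma_2$ contribution, into a non-negative quadratic form in $(\nabla u,\nabla^2 u)$, so that the curvature bound alone dictates the decay rate $2\alpha$. The remaining issues --- justifying the differentiations, the integrability of the various functionals, and the ergodic limit $\Lambda(\infty)=0$ --- I would handle by a standard approximation (bounding $g$ away from $0$ and $\infty$, truncating, and passing to the limit), which is routine relative to the $\Gamma_2$ computation.
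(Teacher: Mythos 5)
The paper does not prove this lemma itself; it cites \citet[Corollary~2.1]{chafai2004entropies}, and the proof there is exactly the Bakry--\'Emery semigroup interpolation you describe: the curvature condition $\Gamma_2(h) \ge \alpha\,\Gamma(h)$ coming from $\nabla^2 f \succeq \alpha I$, the de Bruijn-type identity $\frac{\D}{\D t}\Ent_\Phi^\nu(P_t g) = -\Dir_\Phi^\nu(P_t g)$, and the exponential decay of the $\Phi$-Dirichlet form, with the $\Phi'''$ and $\Phi''''$ correction terms absorbed into a nonnegative quadratic form using the admissibility of $\Phi$. Your proposal is correct and matches that route; the only point worth flagging is that the admissibility condition on $\Phi$ (the class used in the cited reference, which in particular requires four derivatives and a convexity condition on $-1/\Phi''$) is genuinely needed for the correction terms to have the right sign, and is stronger than the paper's standing assumption that $\Phi$ is merely twice differentiable and strictly convex.
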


\section{Review of the Fast Mixing of Langevin Dynamics in $\Phi$-Divergence}\label{app:LangevinDynamicsBackground}

Recall the Langevin dynamics~\eqref{eq:LangevinDynamics} to sample from $\nu \propto \exp{(-f)}$ on $\R^d$
\[
\D X_t = -\nabla f(X_t) \D t + \sqrt{2} \D W_t\,,
\]
where $W_t$ is Brownian motion on $\R^d$. 
These dynamics can equivalently be viewed as the gradient flow for the KL divergence or relative entropy functional $\KL(\cdot \dvert \nu)$ in the space of distributions with the Wasserstein metric~\citep{JKO98, villani2021topics}. A log-Sobolev inequality corresponds to the gradient-domination condition for the relative entropy objective functional, under which there is rapid convergence of $\KL(\cdot \dvert \nu)$ along the dynamics. This affirms that the Langevin dynamics are well-suited for sampling from $\nu$.

The fast convergence of KL divergence under a log-Sobolev inequality assumption on $\nu$ can be extended to showing rapid convergence of $\Phi$-divergence under a $\Phi$-Sobolev inequality assumption~\citep{chafai2004entropies, achleitner2015large}. 
As the Poincaré inequality is the $\Phi$-Sobolev inequality for $\Phi(x) = (x-1)^2$, this also includes the convergence of chi-squared divergence under a Poincaré inequality as a special case.
The convergence of $\Phi$-divergence under a $\Phi$-Sobolev inequality follows easily from Lemma~\ref{lem:SimultaneousSDE} and we mention it below.

\begin{lemma}\label{lem:PhiConvergenceAlongLangevin}
 Suppose $X_t \sim \rho_t$ evolves along the Langevin dynamics~\eqref{eq:LangevinDynamics} to sample from $\nu \propto \exp{(-f)}$, and let $\nu$ satisfy a $\Phi$-Sobolev inequality with optimal constant $\alpha >0$. Then,
 \[
 \sfD_{\Phi}(\rho_t \dvert \nu) \leq e^{-2\alpha t}\sfD_{\Phi}(\rho_0 \dvert \nu).
 \]
\end{lemma}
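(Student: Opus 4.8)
The plan is to establish a first-order differential inequality for the $\Phi$-divergence along the flow and then integrate it via Gr\"onwall's lemma. Writing $r_t \coloneqq \rho_t/\nu$ for the density ratio, I would first compute the time derivative $\frac{\D}{\D t}\,\sfD_\Phi(\rho_t \dvert \nu)$, then use the $\Phi$-Sobolev inequality of Definition~\ref{def:PhiSobolevInequality} to lower bound the resulting $\Phi$-Fisher information by the $\Phi$-divergence itself, yielding an exponentially contracting ODE.

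The heart of the argument is a de~Bruijn--type identity asserting that
\begin{equation*}
    \frac{\D}{\D t}\,\sfD_\Phi(\rho_t \dvert \nu) = -\,\FI_\Phi(\rho_t \dvert \nu).
\end{equation*}
To derive this, I would use that the law $\rho_t$ along~\eqref{eq:LangevinDynamics} solves the Fokker--Planck equation, which (since $\nabla\log\nu = -\nabla f$) can be put in the convenient divergence form $\partial_t \rho_t = \Div\!\big(\nu\,\nabla r_t\big)$. Differentiating under the integral sign in $\sfD_\Phi(\rho_t \dvert \nu) = \int \Phi(r_t)\,\nu \D x$ gives $\int \Phi'(r_t)\,\partial_t\rho_t \D x$; substituting the divergence form and integrating by parts (the boundary terms vanishing under suitable decay) produces $-\int \Phi''(r_t)\,\|\nabla r_t\|^2\,\nu \D x$, which is exactly $-\FI_\Phi(\rho_t \dvert \nu)$ by~\eqref{eq:PhiFisherInformation}. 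This is structurally the same calculation that yields the $\Phi$-mutual de~Bruijn identity in Lemma~\ref{lem:PhiMutualDeBruijn}, applied here to the marginal flow rather than the conditional one, so the same justifications transfer.

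With the identity in hand, the $\Phi$-Sobolev inequality~\eqref{eq:PhiSI_DistributionBased} applied to $\mu = \rho_t$ gives $\FI_\Phi(\rho_t \dvert \nu) \geq 2\alpha\,\sfD_\Phi(\rho_t \dvert \nu)$, hence
\begin{equation*}
    \frac{\D}{\D t}\,\sfD_\Phi(\rho_t \dvert \nu) \;\leq\; -2\alpha\,\sfD_\Phi(\rho_t \dvert \nu).
\end{equation*}
Integrating this (equivalently, observing that $\frac{\D}{\D t}\big(e^{2\alpha t}\,\sfD_\Phi(\rho_t \dvert \nu)\big) \leq 0$) yields $\sfD_\Phi(\rho_t \dvert \nu) \leq e^{-2\alpha t}\,\sfD_\Phi(\rho_0 \dvert \nu)$, as claimed.

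The main obstacle is the rigorous justification of the time-derivative step: interchanging differentiation and integration, ensuring integrability of $\Phi'(r_t)\,\partial_t\rho_t$, and controlling the boundary terms in the integration by parts so that they genuinely vanish. These regularity issues (smoothing of $\rho_t$ for $t>0$ under the dynamics, together with the tail decay of $\nu$ and $\rho_t$) are exactly what an auxiliary result such as the cited Lemma~\ref{lem:SimultaneousSDE} is meant to package, after which the remaining algebra is routine.
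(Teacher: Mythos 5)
Your proposal is correct and follows essentially the same route as the paper: the paper invokes Lemma~\ref{lem:SimultaneousSDE} (with $\mu_t = \rho_t$, $\nu_t = \nu$, $b_t = -\nabla f$, $c=1$) to get exactly the de~Bruijn-type identity $\frac{\D}{\D t}\sfD_\Phi(\rho_t \dvert \nu) = -\FI_\Phi(\rho_t \dvert \nu)$, then applies the $\Phi$-Sobolev inequality and Gr\"onwall's lemma. Your direct Fokker--Planck/integration-by-parts derivation is just the specialization of that lemma to a stationary second argument, so the two arguments coincide.
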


\begin{proof}
    Applying Lemma~\ref{lem:SimultaneousSDE} (with $\mu_t = \rho_t$, $\nu_t = \nu$, $b_t(x) = -\nabla f(x)$ and $c=1$) along with the $\Phi$-Sobolev inequality of $\nu$ (Definition~\ref{def:PhiSobolevInequality}) yields,
    \begin{equation*}
        \frac{\D}{\D t}\sfD_\Phi (\mu_t \dvert \nu) 
        = - \FI_\Phi(\mu_t \dvert \nu) \leq -2\alpha \sfD_\Phi (\mu_t \dvert \nu).
    \end{equation*}
    Using Grönwall's lemma, we conclude the desired bound and thus complete the proof.   
\end{proof}

\section{Covariance and $\Phi$-Mutual Information}\label{app:CovPhiMI}

Here we show that the covariance between two random variables can be upper bounded by their $\Phi$-mutual information. This illustrates that $\Phi$-mutual information is a stronger notion of independence than covariance or correlation.

For random variables $X, Y \in \R^d$, let $\Cov(X,Y) = \E[(X - \E[X])(Y - \E[Y])^\top] \in \R^{d\times d}$ denote the covariance matrix of $X$ and $Y$.
Recall that the convex conjugate of a function $f:\R \to \R$ is $f^* (y) = \sup_{x \in \R} xy - f(x)$.

\begin{lemma}\label{lem:CovPhiMIBound}
   Let $(X, Y) \sim \rho^{XY}$ be a joint random variable. Let $\Phi_{\mathrm{ext}}$ be an extension of $\Phi$, defined as $\Phi_{\mathrm{ext}}(x) = \Phi(x)$ for $x \geq 0$, and $\Phi_{\mathrm{ext}}(x) = \infty$ for $x < 0$. Further denote $\Phi_{\mathrm{ext}}^*$ to be its convex conjugate. Assume $Y \sim \rho^Y$ satisfies the following:
   \begin{equation}\label{eq:ExtensionOfSubGaussianity}
   \exists\, 0<\xi<\infty\,, \text{such that } \forall \, \theta \in \R^d\,,\, \inf_{a \in \R} \E_{\rho^Y} \left[\Phi_{\mathrm{ext}}^*\left(\langle \theta, Y - \E[Y] \rangle - a \right)\right] + a \leq \frac{\|\theta\|^2 \xi^2}{2}\,.
   \end{equation}
   Then,
   \begin{equation}\label{ineq:COV}
       \|\Cov(X,Y)\|_\op \le \xi\, \sqrt{2 \, \|\Cov(X,X)\|_\op \, \MI_{\Phi}(\rho^{XY})}\,.
   \end{equation}
\end{lemma}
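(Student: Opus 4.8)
The plan is to combine the variational (Fenchel--Young) lower bound for $\Phi$-divergence with the conditional decomposition of $\MI_\Phi$ from Definition~\ref{def:PhiMutualInformation}, and then optimize an affine test function. First I would use the variational characterization of the operator norm,
\[
\|\Cov(X,Y)\|_\op = \sup_{\|u\|=\|v\|=1} u^\top \Cov(X,Y)\, v = \sup_{\|u\|=\|v\|=1} \E\bigl[\langle u, X - \E[X]\rangle\,\langle v, Y - \E[Y]\rangle\bigr],
\]
so that it suffices to bound $u^\top \Cov(X,Y)\,v$ for fixed unit vectors $u,v$. Conditioning on $X$ and using the tower rule, $u^\top\Cov(X,Y)\,v = \E_{x\sim\rho^X}\bigl[\langle u, x-\E[X]\rangle\,\langle v, \E[Y\mid X=x]-\E[Y]\rangle\bigr]$.

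Next, for each fixed $x$ I would apply the Fenchel--Young inequality $\Phi_{\mathrm{ext}}(t) \ge s\,t - \Phi_{\mathrm{ext}}^*(s)$ pointwise with $t = \rho^{Y\mid X=x}/\rho^Y$ and $s = g_x(y)$, then integrate against $\rho^Y$; since the likelihood ratio is nonnegative, using the extension $\Phi_{\mathrm{ext}}$ is harmless and reproduces $\sfD_\Phi$. This yields the dual lower bound
\[
\sfD_\Phi(\rho^{Y\mid X=x}\dvert \rho^Y) \ge \E_{\rho^{Y\mid X=x}}[g_x(Y)] - \E_{\rho^Y}[\Phi_{\mathrm{ext}}^*(g_x(Y))],
\]
valid for any test function $g_x$. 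Choosing the affine function $g_x(y) = \langle \theta(x), y - \E[Y]\rangle - a$, rearranging, taking the infimum over $a \in \R$, and invoking Assumption~\eqref{eq:ExtensionOfSubGaussianity} to control the conjugate term gives
\[
\langle \theta(x),\, \E[Y\mid X=x]-\E[Y]\rangle \le \sfD_\Phi(\rho^{Y\mid X=x}\dvert \rho^Y) + \tfrac12 \|\theta(x)\|^2 \xi^2.
\]

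Finally, I would specialize $\theta(x) = s\,\langle u, x-\E[X]\rangle\, v$ with a free scale $s>0$, so the left-hand side becomes $s\,\langle u, x-\E[X]\rangle\,\langle v,\E[Y\mid X=x]-\E[Y]\rangle$ and $\|\theta(x)\|^2 = s^2\langle u,x-\E[X]\rangle^2$. Taking $\E_{x\sim\rho^X}$ and using both the conditional decomposition $\MI_\Phi(\rho^{XY}) = \E_{x\sim\rho^X}[\sfD_\Phi(\rho^{Y\mid X=x}\dvert \rho^Y)]$ and the bound $\E_x[\langle u, x-\E[X]\rangle^2] = u^\top\Cov(X,X)\,u \le \|\Cov(X,X)\|_\op$ yields
\[
s\, u^\top\Cov(X,Y)\,v \le \MI_\Phi(\rho^{XY}) + \tfrac12\, s^2 \xi^2 \|\Cov(X,X)\|_\op.
\]
Dividing by $s$ and minimizing the right-hand side over $s>0$ (the optimum is $s = \sqrt{2\,\MI_\Phi(\rho^{XY})/(\xi^2\|\Cov(X,X)\|_\op)}$) gives $u^\top\Cov(X,Y)\,v \le \xi\sqrt{2\,\|\Cov(X,X)\|_\op\,\MI_\Phi(\rho^{XY})}$, and taking the supremum over unit $u,v$ delivers~\eqref{ineq:COV}.

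The main obstacle I expect is the rigorous justification of the dual lower bound with the extended conjugate rather than the mechanics above: one must ensure that $\Phi_{\mathrm{ext}}^*(g_x(Y))$ is $\rho^Y$-integrable for the affine $g_x$, that $x \mapsto \theta(x)$ is measurable so the final expectation over $x$ is well defined, and that the infimum over $a$ can be taken pointwise in $x$ and still integrated against $\rho^X$. This is precisely where Assumption~\eqref{eq:ExtensionOfSubGaussianity} does the real work, since it simultaneously guarantees finiteness of the conjugate term and supplies the quadratic-in-$\theta$ control that makes the optimization over $s$ produce the stated constant.
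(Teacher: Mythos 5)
Your proposal is correct and follows essentially the same route as the paper: the variational (Fenchel--Young) lower bound for $\sfD_\Phi$ with an affine test function, the generalized sub-Gaussianity assumption~\eqref{eq:ExtensionOfSubGaussianity} to control the conjugate term, and the conditional decomposition $\MI_\Phi(\rho^{XY}) = \E_{x\sim\rho^X}[\sfD_\Phi(\rho^{Y\mid X=x}\dvert\rho^Y)]$. The only difference is cosmetic: the paper optimizes the scale $\lambda$ pointwise in $x$ and then applies Cauchy--Schwarz over $x$, whereas you fold the $x$-dependence into $\theta(x)$, average first, and optimize a single global scale $s$ --- these two orderings are equivalent and yield the identical constant.
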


\begin{proof}
    Let $u, v \in \R^d$ with $\|u\| = \|v\| = 1$.
    We can bound
    \begin{align}
        u^\top \Cov(X,Y) v
        &= \E_{\rho^{XY}}\left[(u^\top X - \E_{\rho^X}[u^\top X])(v^\top Y - \E_{\rho^Y}[v^\top Y])\right] \notag \\
        &= \E_{\rho^X}\left[(u^\top X - \E_{\rho^X}[u^\top X])(v^\top \E_{\rho^{Y \mid X}}[Y] - \E_{\rho^Y}[v^\top Y])\right] \notag \\
        &\le \E_{\rho^X}\left[\left|u^\top X - \E_{\rho^X}[u^\top X]\right| \cdot \left|v^\top \E_{\rho^{Y \mid X}}[Y] - \E_{\rho^Y}[v^\top Y]\right|\right]. \label{Eq:CovPhiMI_Calc1}
    \end{align}
    The variational representation of $\Phi$-divergence~\cite[Theorem~7.26]{polyanskiy2025information} for $\pi_1 \ll \pi_2$ states that:
    \begin{equation}\label{eq:PhiDivergence_Variational}
    \sfD_{\Phi}(\pi_1 \dvert \pi_2) = \sup_{g \colon \R^d \to \R}\, \E_{Y \sim \pi_1}\left[g(Y)\right] - \psi^*_{\pi_2}(g)\,,
    \end{equation}
    where 
    \begin{equation}\label{eq:psi}
        \psi^*_{\pi_2}(g) \coloneqq \inf_{a \in \R} \E_{Y \sim \pi_2} \left[  \Phi_{\mathrm{ext}}^*(g(Y) - a) \right] + a\,.
    \end{equation}
    For any $\lambda > 0$\,, take 
    \[
    g(y) = \lambda\left(v^\top y - \E_{\rho^Y}[v^\top Y]\right), \quad \pi_1 = \rho^{Y \mid X}, \quad \pi_2 = \rho^Y
    \]
    for a fixed $X \in \R^d$\,, and substitute in~\eqref{eq:PhiDivergence_Variational} to obtain,
    \[
    \lambda\, v^\top \!\!\! \E_{\rho^{Y \mid X}}[Y] - \lambda\E_{\rho^Y}[v^\top Y]
        \,=\, \E_{\rho^{Y \mid X}}[g(Y)]
        \,\le\, \sfD_{\Phi}(\rho^{Y \mid X} \dvert \rho^Y) +  \psi_{\rho^Y}^*(g)\,.
    \]
    Therefore, we have that
    \begin{align*}
    v^\top \!\!\! \E_{\rho^{Y \mid X}}[Y] - \E_{\rho^Y}[v^\top Y]
        &\,\le\, \frac{1}{\lambda}\sfD_{\Phi}(\rho^{Y \mid X} \dvert \rho^Y) +  \frac{1}{\lambda}\psi_{\rho^Y}^*(g)\,,\\
        &\,=\, \frac{1}{\lambda}\sfD_{\Phi}(\rho^{Y \mid X} \dvert \rho^Y) + \frac{1}{\lambda} \left[ \inf_{a \in \R} \E_{\rho^Y} \left[ \Phi_{\mathrm{ext}}^*\left(\lambda(v^\top Y - \E_{\rho^Y}[v^\top Y]) -a \right)  \right] +a \right],
    \end{align*}
    where the identity is due to \eqref{eq:psi}.
    Using the assumption \eqref{eq:ExtensionOfSubGaussianity} on $\rho^Y$ along with the fact that $\|v\| = 1$, we have that,
    \[
    v^\top \!\!\! \E_{\rho^{Y \mid X}}[Y] - \E_{\rho^Y}[v^\top Y]
        \,\le\, \frac{1}{\lambda}\sfD_{\Phi}(\rho^{Y \mid X} \dvert \rho^Y) +  \frac{\lambda}{2}\xi^2\,.
    \]
    Choosing the optimal $\lambda = \frac{1}{\xi}\sqrt{2 \sfD_{\Phi}(\rho^{Y \mid X} \dvert \rho^Y)}$ gives the bound,
    \begin{align*}
        v^\top \E_{\rho^{Y \mid X}}[Y] - \E_{\rho^Y}[v^\top Y]
        &\le \xi \sqrt{2 \sfD_{\Phi}(\rho^{Y \mid X} \dvert \rho^Y)}\,.
    \end{align*}
    Since the right-hand side does not depend on $v$, the same argument applied to $-v$ yields the bound
    \begin{align*}
        \left|v^\top \E_{\rho^{Y \mid X}}[Y] - \E_{\rho^Y}[v^\top Y]\right|
        &\le \xi \sqrt{2 \sfD_{\Phi}(\rho^{Y \mid X} \dvert \rho^Y)}\,.
    \end{align*}
    We then plug this in to~\eqref{Eq:CovPhiMI_Calc1} and get
    \begin{align*}
        u^\top \Cov(X,Y) v
        &\le \E_{\rho^X}\left[\left|u^\top X - \E_{\rho^X}[u^\top X]\right| \cdot \xi \sqrt{2 \sfD_{\Phi}(\rho^{Y \mid X} \dvert \rho^Y)}\right] \\
        &\le \xi\sqrt{2} \, \left(\E_{\rho^X}\left[(u^\top X - \E_{\rho^X}[u^\top X])^2\right]\right)^{1/2} \, \left(\E_{\rho^X}[\sfD_{\Phi}(\rho^{Y \mid X} \dvert \rho^Y)]\right)^{1/2} \\
        &= \xi \sqrt{2} \, \sqrt{u^\top \Cov_{\rho^X}(X,X) u} \, \sqrt{\MI_{\Phi}(\rho^{XY})} \\
        &\leq \xi \sqrt{2} \, \sqrt{\|\Cov(X,X)\|_\op \, \MI_{\Phi}(\rho^{XY})}\,,
    \end{align*}
    where the second inequality follows by applying the Cauchy-Schwarz inequality, and the last inequality is because $\|\Cov(X,Y)\|_\op = \sup\{u^\top \Cov(X,Y) v \,\colon \|u\| = \|v\| = 1\}$\,.
    Finally, the conclusion \eqref{ineq:COV} of the lemma follows from this argument as well.
\end{proof}

The assumption on $\rho^Y$ in~\eqref{eq:ExtensionOfSubGaussianity} in Lemma~\ref{lem:CovPhiMIBound} is a generalization of a sub-Gaussianity assumption on $\rho^Y$, which~\eqref{eq:ExtensionOfSubGaussianity} simplifies to for $\Phi(x) = x \log x$. Indeed, for $\Phi(x) = x \log x$, $\Phi_{\mathrm{ext}}^*(y) = e^{y-1}$ and $\psi_{\pi_2}^*(g) = \log \E_{\pi_2} [e^{g(Y)}]$ (defined in~\eqref{eq:psi}), which means that $\rho^Y$ satisfying~\eqref{eq:ExtensionOfSubGaussianity} means that it is $\xi$ sub-Gaussian.
For $\Phi(x) = x \log x$,~\eqref{eq:PhiDivergence_Variational} corresponds to the Donsker-Varadhan variational formula for KL divergence.

Lemma~\ref{lem:CovPhiMIBound} strengthens~\citet[Lemma~1]{xu2017information} to (a) only requiring a marginal sub-Gaussian condition, and (b) extending the result to any $\Phi$-mutual information.

\section{Review of the Covariance Decay under Poincar\'e Inequality}\label{app:CovarianceDecay}

We review the classical fact that for any reversible Markov semigroup $\bP_t$ satisfying a Poincaré inequality, there is exponential convergence of covariance when measured against functions in $L^2_\nu$ (where $\nu$ is stationary for $\bP_t$); see also~\citep{madrasslade93, madras02markov}.

\begin{lemma}\label{lem:CovDecayUnderPoincare}
    Let $\bP_t$ be a reversible Markov semigroup with stationary distribution $\nu$ and associated stochastic process $X_t \sim \rho_t$. Suppose $\bP_t$ satisfies a Poincaré inequality with constant $\alpha >0$ and is at stationarity, i.e. $X_0 \sim  \nu$. Then for all functions $f \in L^2(\nu)$ and all $t \geq 0$
    \[
    \Cov (f(X_t), f(X_0)) \leq \exp (-\alpha t) \Var f(X_0)\,.
    \]
\end{lemma}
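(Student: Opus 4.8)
The plan is to exploit reversibility and the spectral interpretation of the Poincar\'e inequality. First I would set up the semigroup machinery: since $\bP_t$ is reversible with respect to $\nu$, its generator $\cL$ is self-adjoint and negative semi-definite on $L^2(\nu)$, and $\bP_t = e^{t\cL}$. Without loss of generality I would center the function by replacing $f$ with $\bar f := f - \E_\nu[f]$, so that $\E_\nu[\bar f] = 0$; note this does not change either the covariance or the variance since constants are in the kernel of the relevant operations. Because $X_0 \sim \nu$ is at stationarity, $X_t \sim \nu$ as well, and the joint law of $(X_0, X_t)$ is governed by the semigroup, so I can write the covariance as an inner product in $L^2(\nu)$:
\[
\Cov(f(X_t), f(X_0)) = \E_\nu\!\left[\bar f \cdot (\bP_t \bar f)\right] = \langle \bar f, \bP_t \bar f\rangle_{L^2(\nu)}.
\]

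Next I would bound this inner product using self-adjointness and the Poincar\'e inequality. The key step is the operator inequality $\bP_t = e^{t\cL} \preceq e^{-\alpha t}$ on the subspace of mean-zero functions, which follows because the Poincar\'e inequality states $\alpha \Var_\nu(g) \le \Dir(g) = -\langle g, \cL g\rangle$, i.e. the spectrum of $-\cL$ restricted to mean-zero functions is bounded below by $\alpha$. Applying the spectral theorem to the self-adjoint operator $-\cL$ gives, for any mean-zero $\bar f$,
\[
\langle \bar f, \bP_t \bar f\rangle_{L^2(\nu)} \le e^{-\alpha t}\, \langle \bar f, \bar f\rangle_{L^2(\nu)} = e^{-\alpha t}\,\Var_\nu(f).
\]
Combined with the covariance identity above, this yields exactly the claimed bound $\Cov(f(X_t), f(X_0)) \le e^{-\alpha t}\,\Var f(X_0)$.

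Alternatively, if one prefers to avoid invoking the full spectral theorem, I would argue via a differential inequality. Define $h(t) := \langle \bar f, \bP_t \bar f\rangle_{L^2(\nu)}$; then $h(0) = \Var_\nu(f)$ and, differentiating and using self-adjointness together with the semigroup property, $h'(t) = \langle \bar f, \cL \bP_t \bar f\rangle = \langle \bP_{t/2}\bar f, \cL \bP_{t/2}\bar f\rangle = -\Dir(\bP_{t/2}\bar f) \le -\alpha\,\Var_\nu(\bP_{t/2}\bar f) = -\alpha\, h(t)$, where the inequality is the Poincar\'e inequality applied to the mean-zero function $\bP_{t/2}\bar f$ and the final identity again uses self-adjointness and the semigroup property. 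Gr\"onwall's lemma then gives $h(t) \le e^{-\alpha t} h(0)$.

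The main obstacle is essentially bookkeeping rather than depth: one must be careful that reversibility gives both the self-adjointness of $\cL$ (so that the symmetric splitting $\bP_t = \bP_{t/2}\bP_{t/2}$ can be used and $\langle \bar f, \bP_t \bar f\rangle$ equals $\langle \bP_{t/2}\bar f, \bP_{t/2}\bar f\rangle$ with the generator moved onto one factor) and the identification of the covariance with the $L^2(\nu)$ inner product under the joint stationary law. A minor technical point to verify is that $\bar f = f - \E_\nu[f] \in L^2(\nu)$ whenever $f \in L^2(\nu)$, and that $\bP_{t/2}\bar f$ remains mean-zero (immediate since $\bP_t$ preserves $\nu$ and hence preserves $\E_\nu$), so that the Poincar\'e inequality applies to it at each time. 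I would present the differential-inequality version as the cleaner self-contained argument.
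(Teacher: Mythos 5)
Your proposal is correct, and it shares the paper's central step: after centering $f$ and using stationarity, both arguments identify $\Cov(f(X_t),f(X_0))$ with the inner product $\langle \bar f, \bP_t \bar f\rangle_{L^2(\nu)}$. Where you diverge is in how the Poincar\'e inequality is then deployed. The paper takes as its definition of the Poincar\'e inequality the $L^2$-decay statement $\|\bP_t \bar f\|_{L^2(\nu)}^2 \le e^{-2\alpha t}\|\bar f\|_{L^2(\nu)}^2$ for mean-zero $\bar f$, and finishes with a single application of Cauchy--Schwarz, $\langle \bar f, \bP_t\bar f\rangle \le \|\bar f\|\,\|\bP_t\bar f\| \le e^{-\alpha t}\|\bar f\|^2$; no generator, spectral theorem, or differentiation is needed. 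You instead start from the Dirichlet-form (variational) statement $\alpha\Var_\nu(g)\le -\langle g,\cL g\rangle$ and recover the decay either spectrally or via the symmetric splitting $\langle \bar f,\bP_t\bar f\rangle = \|\bP_{t/2}\bar f\|^2$ and Gr\"onwall. Both routes give the same constant and rate; yours is self-contained from the generator-level formulation and makes the mechanism (spectral gap on the mean-zero subspace) explicit, while the paper's is shorter because it cites the equivalence of the two formulations and only needs Cauchy--Schwarz on top. One small point worth making explicit in your differential-inequality version is the positivity $h(t)=\|\bP_{t/2}\bar f\|^2\ge 0$, which you implicitly use when applying the Poincar\'e inequality in the form $-\Dir(\bP_{t/2}\bar f)\le -\alpha h(t)$ and then Gr\"onwall; this is immediate from the same symmetric splitting, so there is no gap.
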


\begin{proof}
    Let $f$ be an arbitrary function in $L^2(\nu)$. Denote $\E_{\nu}[f] = \hat f$ and $g \coloneqq f - \hat f$. Then $g \in L^2(\nu)$ and $\E_{\nu}[g] = 0$. Also denote $\rho_{0,t} \coloneqq \law (X_0, X_t)$.
    Recall (see e.g.,~\citep[Theorem~1.2.21]{chewi2023log}) that a reversible semigroup $\bP_t$ satisfies a Poincaré inequality with constant $\alpha > 0$ if for all functions $g \in L^2(\nu)$ with $\E_{\nu}[g] = 0$,
\begin{equation}\label{eq:GeneralPoincareInequality}
\|\bP_t g\|^2_{L^2(\nu)} \leq \exp \left( -2\alpha t \right) \|g\|^2_{L^2(\nu)}.
\end{equation}
    Therefore, keeping in mind that the process is at stationarity, we have the following.
    \begin{align*}
        \Cov(f(X_t), f(X_0)) &= \E \left[ (f(X_t) - \hat f)(f(X_0) - \hat f)  \right]\\
        &= \E \left[ (f - \hat f)(X_t) \, (f-\hat f)(X_0)  \right]\\
        &= \E_{\rho_{0,t}} \left[ g(X_t) \, g(X_0) \right]\\
        &= \E_{\rho_0} \left[g(X_0) \E_{\rho_{t \mid 0}}[g(X_t)] \right]\\
        &= \langle g, \bP_t g \rangle_{\nu}\\
        &\leq \sqrt{\|\bP_t g\|^2_{L^2(\nu)} \|g\|^2_{L^2(\nu)}}\\
        &\leq \exp(-\alpha t)\|g\|^2_{L^2(\nu)}\\
        &= \exp(-\alpha t)\|f-\hat f\|^2_{L^2(\nu)} =  \exp(-\alpha t) \Var_{\nu} f = \exp (-\alpha t) \Var f(X_0).
    \end{align*}
    Here, the third equality is by definition of $g$, the fifth equality is by the definition of a Markov semigroup and the $L^2_{\nu}$ inner product, the first inequality is by Cauchy-Schwartz inequality, and the last inequality is due to Poincaré inequality~\eqref{eq:GeneralPoincareInequality}.

\end{proof}

\section{Further Background on Proximal Sampler and RGO Implementation via Rejection Sampling}\label{app:ProximalBackground}

The Proximal Sampler~\eqref{eqs:ProximalSampler} is a discrete-time continuous-space Gibbs sampling algorithm which has been studied for both unbounded space~\citep{lee2021structured, chen2022improved, mitra2024fast} and constrained space~\citep{kook2024inandout, kook2024sampling}. Our main result for the Proximal Sampler, Theorem~\ref{thm:ProximalMain}, considers the ideal Proximal Sampler which assumes access to an RGO which outputs a sample from~\eqref{eq:RGOdistribution} without any bias. This is assumed in many works~\citep{lee2021structured, chen2022improved, mitra2024fast}.
Improved analysis of the Proximal Sampler and better RGO implementations is an active area of research~\citep{liang2022proximal,liang2023a,liang2024proximal,fan2023improved,altschuler2024faster}. 

For clarity, we provide a brief review of a basic RGO implementation via rejection sampling in the smooth case, and state the corresponding oracle complexity (i.e. the expected number of calls to a first-order oracle of $f$) when using the Proximal Sampler with a rejection sampling-based RGO implementation in Corollary~\ref{cor:ProximalWithRejectionSamplingRGO}. RGO implementations in more general setups, where the potential function $f$ can be nonsmooth, weakly-smooth, and even nonconvex, have been extensively studied in~\citep{liang2022proximal,liang2023a,liang2024proximal,fan2023improved}. In these general settings, related proximal optimization problems are also efficiently solved within the RGO implementations.

\paragraph{Rejection Sampling}
Suppose $\pi \propto \exp{(-V)}$ on $\R^d$ is $\beta$-strongly log-concave and $M$-smooth. The rejection sampling method to sample from $\pi$ is the following:
\begin{enumerate}
    \item Compute the minimizer $x^*$ of $V$, so that for any $z \in \R^d$, $V(z) \geq V(x^*) + \frac{\beta}{2}\|z-x^*\|^2$.
    \item Draw $Z \sim \N(x^*, \frac{1}{\beta}I)$ and accept it with probability
    \[
    \exp \left( -V(Z) + V(x^*) + \frac{\beta}{2}\|Z-x^*\|^2 \right).
    \]
    Repeat this until acceptance.
\end{enumerate}

The output of this method is distributed according to $\pi$ and the expected number of iterations is $(\frac{M}{\beta})^{d/2}$ \citep[Theorem~7]{chewi2022query}.

\medskip

We can use rejection sampling to implement a RGO as follows. Recall the conditional distribution we seek to sample from~\eqref{eq:RGOdistribution}:
\[
\nu^{X \mid Y}(x \mid y) \propto_x \exp \left( -f(x) - \frac{\|x-y\|^2}{2\eta} \right).
\]
Define $g_y(x) \coloneqq  f(x) + \frac{\|x-y\|^2}{2\eta}$ so that for any fixed $y \in \R^d$, the target distribution for the RGO is $\tilde \nu_y (x) \propto \exp(-g_y(x))$.
Suppose the potential function $f$ is $L$-smooth and that $\eta < \frac{1}{L}$. In this case, $\tilde \nu_y$ is strongly log-concave with condition number $\frac{1+L\eta}{1-L\eta}$.

Using rejection sampling to sample from $\tilde \nu_y$ with $\eta \asymp \frac{1}{Ld}$ gives a valid implementation of the RGO under smoothness of $f$ with $\cO(1)$ many iterations in expectation. 
Specifically, $M = L + \frac{1}{\eta}$ and $\beta = -L + \frac{1}{\eta}$ and therefore, $\frac{M}{\beta} = \frac{1+L\eta}{1-L\eta}$. So if $\eta = \frac{1}{Ld}$, $(\frac{M}{\beta})^{d/2} = (1+\frac{2}{d-1})^{d/2} = \cO(1)$.

We therefore have the following corollary describing the oracle complexity of the Proximal Sampler with a rejection sampling-based RGO. For simplicity, in Corollary~\ref{cor:ProximalWithRejectionSamplingRGO} we assume $\rho_1^X$ satisfies a $\Phi$-Sobolev inequality with optimal constant $\alpha_{\PSI}(\rho_1^X)$ as opposed to some $\rho_\ell^X$ for $\ell \geq 1$ as we state in Theorem~\ref{thm:ProximalMain}. Changing this only corresponds to an additive constant $\ell$ term in the bound below.

\begin{corollary}\label{cor:ProximalWithRejectionSamplingRGO}
        Suppose $\nu^X \propto \exp{(-f)}$ is $\alpha$-strongly log-concave and $L$-smooth. Let the joint law of $(X_i, X_j)$ be $\rho_{i,j}^X$ and suppose $\rho_1^X$ satisfies a $\Phi$-Sobolev inequality with optimal constant $\alpha_{\PSI}(\rho_1^X)$. Then for any $\epsilon > 0$, the Proximal Sampler~\eqref{eqs:ProximalSampler} with $\eta \asymp \frac{1}{Ld}$ and with a rejection sampling-based RGO implementation (as described above) outputs $X_k \sim \rho_k^X$ with $\MI_{\Phi}(\rho_{0,k}^X) \leq \epsilon$ as long as $k \geq \frac{Ld}{2 \min\{\alpha, \alpha_{\PSI}(\rho_1^X)\}}\log \frac{\MI_{\Phi}(\rho_{0,1}^X)}{\epsilon} $\,. The expected number of oracle calls to $\nabla f$ is 
        \[
        \cO\left( \frac{Ld}{\min\{\alpha, \alpha_{\PSI}(\rho_1^X)\}}\log \frac{\MI_{\Phi}(\rho_{0,1}^X)}{\epsilon} \right).
        \]
\end{corollary}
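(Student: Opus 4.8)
The plan is to combine two ingredients that are already in place: the iteration-complexity bound for the ideal Proximal Sampler from Corollary~\ref{cor:PS}, and the per-step cost of realizing the backward step through the rejection-sampling RGO analyzed above. The first controls how many Proximal Sampler steps $k$ are needed to drive $\MI_{\Phi}(\rho_{0,k}^X)$ below $\epsilon$, and the second controls the expected number of $\nabla f$ evaluations spent inside each step; multiplying the two yields the claimed expected oracle complexity.

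First I would fix $\ell = 1$ in Corollary~\ref{cor:PS}, which guarantees $\MI_{\Phi}(\rho_{0,k}^X) \leq \epsilon$ as soon as
\[
k \geq 1 + \left[\frac12 + \frac{1}{2\eta \min\{\alpha, \alpha_{\PSI}(\rho_1^X)\}}\right]\log\!\left(\epsilon^{-1}\MI_{\Phi}(\rho_{0,1}^X)\right).
\]
Substituting $\eta \asymp \frac{1}{Ld}$ turns $\frac{1}{\eta}$ into a term of order $Ld$, which dominates; absorbing the additive $1 + \frac12\log(\cdots)$ into the leading term gives the stated threshold $k \geq \frac{Ld}{2\min\{\alpha, \alpha_{\PSI}(\rho_1^X)\}}\log\frac{\MI_{\Phi}(\rho_{0,1}^X)}{\epsilon}$.

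Next I would bound the expected oracle cost of a single step. The forward step only draws from $\N(X_k, \eta I)$ and so makes no calls to $\nabla f$. For the backward step, the RGO samples from $\tilde\nu_y \propto \exp(-g_y)$ with $g_y(x) = f(x) + \frac{\|x-y\|^2}{2\eta}$; since $f$ is $L$-smooth, $g_y$ is $\beta$-strongly convex with $\beta = \frac{1}{\eta} - L$ and $M$-smooth with $M = \frac{1}{\eta} + L$, so its condition number $\frac{M}{\beta} = \frac{1 + L\eta}{1 - L\eta}$ is independent of $y$. The choice $\eta \asymp \frac{1}{Ld}$ ensures $\eta < \frac{1}{L}$ (so $\beta > 0$) and makes $\bigl(\frac{M}{\beta}\bigr)^{d/2} = \bigl(1 + \frac{2}{d-1}\bigr)^{d/2} = \cO(1)$. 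By the rejection-sampling guarantee~\citep[Theorem~7]{chewi2022query}, the expected number of trials per RGO call is $\cO(1)$, and because the minimizer $x^*$ of $g_y$ and each acceptance probability are computed at this constant condition number, each backward step uses $\cO(1)$ gradient evaluations in expectation.

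Finally I would multiply: by linearity of expectation the total expected number of $\nabla f$ calls is the per-step $\cO(1)$ cost times the iteration count $k$, giving $\cO\!\bigl(\frac{Ld}{\min\{\alpha, \alpha_{\PSI}(\rho_1^X)\}}\log\frac{\MI_{\Phi}(\rho_{0,1}^X)}{\epsilon}\bigr)$. There is no conceptual obstacle here, since both ingredients are established; the only care needed is bookkeeping—verifying that the $\cO(1)$ per-step bound is uniform over the conditioning point $y$ (which holds because $\frac{M}{\beta}$ depends only on $L$ and $\eta$) and that the randomized per-step cost and the deterministic iteration count combine cleanly under linearity of expectation.
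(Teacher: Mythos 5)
Your proposal is correct and follows essentially the same route the paper intends: instantiate Corollary~\ref{cor:PS} with $\ell=1$ and $\eta \asymp \frac{1}{Ld}$ to get the iteration count, then multiply by the $\cO(1)$ expected per-step cost of the rejection-sampling RGO, whose condition number $\frac{1+L\eta}{1-L\eta}$ gives $\bigl(\frac{M}{\beta}\bigr)^{d/2} = \cO(1)$ at this step size. The paper leaves this corollary without a written proof, and your bookkeeping (including the uniformity over $y$ and the treatment of the prox/minimizer computation at the same level of idealization the paper uses) matches its implicit argument.
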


\section{Rate of Change of Divergence Between Simultaneous Evolutions}

Here we describe the rate of change of $\Phi$-divergence along simultaneous evolutions of the same SDE. The following lemma is crucial in the analyses of all Langevin-type Markov chains considered in this paper. It is identical to \citet[Lemma~8]{mitra2024fast} and presented below for completeness. Similar results can be found in~\cite[Lemmas~12 and 15]{chen2022improved} and~\citet[Theorem~8.3.1]{chewi2023log}.

\begin{lemma}\label{lem:SimultaneousSDE}
    Suppose $X_t \sim \mu_t$ and $X_t \sim \nu_t$ with initial conditions $\mu_0$ and $\nu_0$ are two solutions of the following SDE:
    \begin{equation}\label{eq:GeneralSDE}
    \D X_t = b_t(X_t) \D t + \sqrt{2c} \D W_t\,,
    \end{equation}
    where $b_t : \R^d \to \R^d$ is a time-varying drift function, $c$ is a positive constant, and $W_t$ is the standard Brownian motion on $\R^d$.
    Then for all $t \ge 0$,
    \begin{equation*}
        \frac{\D}{\D t}\sfD_\Phi (\mu_t \dvert \nu_t) 
        = -c \, \FI_\Phi(\mu_t \dvert \nu_t).
    \end{equation*}
\end{lemma}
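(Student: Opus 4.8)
The plan is to differentiate the $\Phi$-divergence directly in time, using the Fokker--Planck equations for the two densities together with integration by parts, and to exploit the fact that both $\mu_t$ and $\nu_t$ are driven by the \emph{same} drift $b_t$ and the same diffusion constant $c$. The whole argument is a calculation; the only input beyond the definitions is that sharing a drift forces an exact cancellation.

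First I would record that each density evolves by the Fokker--Planck equation associated with~\eqref{eq:GeneralSDE}, which in divergence form reads $\partial_t \mu_t = -\Div(\mu_t b_t - c\nabla \mu_t)$ and likewise $\partial_t \nu_t = -\Div(\nu_t b_t - c\nabla \nu_t)$. Writing $r_t := \mu_t/\nu_t$, so that $\sfD_\Phi(\mu_t \dvert \nu_t) = \int \Phi(r_t)\,\nu_t \,\D x$, I would differentiate under the integral sign and apply the chain rule, using $\partial_t r_t = (\partial_t \mu_t - r_t\,\partial_t \nu_t)/\nu_t$, to obtain the pointwise identity $\partial_t[\Phi(r_t)\nu_t] = \Phi'(r_t)\,\partial_t \mu_t + [\Phi(r_t) - r_t\Phi'(r_t)]\,\partial_t \nu_t$.

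Next I would substitute the two Fokker--Planck equations and integrate by parts, discarding boundary terms at infinity. The relevant gradients simplify: $\nabla \Phi'(r_t) = \Phi''(r_t)\nabla r_t$ and $\nabla[\Phi(r_t) - r_t\Phi'(r_t)] = -r_t\Phi''(r_t)\nabla r_t$. Collecting terms, the integrand becomes $\Phi''(r_t)\nabla r_t \cdot [(\mu_t b_t - c\nabla \mu_t) - r_t(\nu_t b_t - c\nabla \nu_t)]$. The crucial cancellation is that the drift contributions vanish, since $\mu_t = r_t\nu_t$ gives $\mu_t b_t - r_t\nu_t b_t = 0$; this is precisely where sharing the same drift $b_t$ is used. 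The diffusion part simplifies via $-c\nabla \mu_t + c\,r_t\nabla \nu_t = -c\,\nu_t\nabla r_t$, leaving $-c\int \Phi''(r_t)\|\nabla r_t\|^2\,\nu_t \,\D x = -c\,\FI_\Phi(\mu_t \dvert \nu_t)$, which is the claim.

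The main obstacle is analytic rather than algebraic: justifying the differentiation under the integral sign and the vanishing of boundary terms in the integration by parts. Under the standing regularity (smooth, sufficiently decaying densities, as guaranteed along the Langevin-type flows we consider) these steps are routine, and one may alternatively invoke the corresponding computations in~\citet[Theorem~8.3.1]{chewi2023log} or~\citet[Lemma~8]{mitra2024fast}. The algebraic heart of the argument---the exact cancellation of the drift---needs only that the two processes obey the same SDE, and no structure of $\Phi$ beyond the twice-differentiability defining $\FI_\Phi$; strict convexity is not used here.
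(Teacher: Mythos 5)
Your proposal is correct and follows essentially the same route as the paper's proof: differentiate under the integral, substitute the Fokker--Planck equations for both densities, integrate by parts, and observe that the shared drift cancels exactly, leaving $-c\,\FI_\Phi(\mu_t \dvert \nu_t)$. Your grouping of the flux $\mu_t b_t - c\nabla\mu_t$ is a slightly tidier organization of the same cancellation the paper carries out term by term ($T_1 + T_2 - T_3$), and your closing remarks on the regularity caveats match the paper's implicit assumptions.
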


\begin{proof}
Begin by recalling that if $X_t \sim \rho_t$ where $\D X_t = b_t(X_t) \D t + \sqrt{2c} \D W_t$, then $\rho_t : \R^d \to \R$ satisfies the Fokker-Planck equation, given by:
\[
\partial_t \rho_t = -\nabla \cdot (\rho_t \, b_t) + c \Delta \rho_t\,.
\]
Also note that using the identity $\Delta \rho = \nabla \cdot (\rho \nabla \log \rho)$, the above can be written as: 
\begin{equation}\label{eq:EquivalentPDEForms}
\partial_t \rho_t = -\nabla \cdot (\rho_t \, b_t) + c \nabla \cdot (\rho_t \nabla \log \rho_t)\,.
\end{equation}

We identify $\mu_t$ and $\nu_t$ with their densities with respect to Lebesgue measure, and further denote their relative density as $h_t = \frac{\mu_t}{\nu_t}$\,. We also assume enough regularity to take the differential under the integral sign and use $\int f g$ as a shorthand for $\int f(x) g(x) \D x$. Throughout the proof, we use integration by parts in various steps, denoted by (IBP).  With all of this in mind, we have the following:
\begin{align*}
    \partial_t\, \sfD_\Phi (\mu_t \dvert \nu_t) &= \partial_t \int \nu_t\, \Phi (h_t) \\
    &= \int (\partial_t \nu_t) \Phi (h_t)  + \int \nu_t\, \big( \partial_t\, \Phi (h_t) \big) \\
    &= \int (\partial_t \nu_t)  \Phi(h_t)  + \int \nu_t \Phi'(h_t) \frac{\nu_t\, \partial_t \mu_t - \mu_t\, \partial_t \nu_t}{\nu_t^2}  \\
    &= \underbrace{\int (\partial_t \nu_t)  \Phi(h_t)}_{T_1}  + \underbrace{\int \Phi'(h_t) (\partial_t \mu_t)}_{T_2}  - \underbrace{\int \Phi'(h_t) \frac{\mu_t}{\nu_t} (\partial_t \nu_t)}_{T_3}
\end{align*}
We will now handle each of these terms separately. 
We have:
\begin{align*}
    T_1 &= \int (\partial_t \nu_t)  \Phi(h_t)\\
    &\stackrel{\eqref{eq:EquivalentPDEForms}}= \int \left( -\nabla \cdot (\nu_t \, b_t) + c \nabla \cdot (\nu_t \nabla \log \nu_t) \right) \Phi(h_t)\\
    &= -\int \nabla \cdot (\nu_t \, b_t) \Phi(h_t) + c \int \nabla \cdot (\nu_t \nabla \log \nu_t) \Phi(h_t)\\
    &\stackrel{\text{(IBP)}}= \int \langle \nu_t b_t , \nabla (\Phi (h_t)) \rangle  -c \int \langle \nu_t \nabla \log \nu_t , \nabla (\Phi (h_t)) \rangle\\
    &=  \int \langle \nu_t b_t , \Phi' (h_t) \nabla \frac{\mu_t}{\nu_t} \rangle  -c \int \langle \nu_t \nabla \log \nu_t , \Phi'(h_t) \nabla \frac{\mu_t}{\nu_t} \rangle
\end{align*}

We also have:
\begin{align*}
    T_2 &= \int \Phi'(h_t) (\partial_t \mu_t) \\
    &\stackrel{\eqref{eq:EquivalentPDEForms}}= \int \Phi'(h_t) \left( -\nabla \cdot (\mu_t \, b_t) + c \nabla \cdot (\mu_t \nabla \log \mu_t)  \right)\\
    &\stackrel{\text{(IBP)}}= \int \langle \mu_t \, b_t, \nabla (\Phi'(h_t)) \rangle - c \int \langle \mu_t \nabla \log \mu_t, \nabla (\Phi'(h_t)) \rangle\\
    &= \int \langle \mu_t \, b_t, \Phi''(h_t) \nabla \frac{\mu_t}{\nu_t} \rangle -c \int \langle \mu_t \nabla \log \mu_t, \Phi''(h_t) \nabla \frac{\mu_t}{\nu_t} \rangle 
\end{align*}

We also have:
\begin{align*}
    T_3 &= \int \Phi'(h_t) \frac{\mu_t}{\nu_t} (\partial_t \nu_t)\\
    &\stackrel{\eqref{eq:EquivalentPDEForms}}= \int \Phi'(h_t) \frac{\mu_t}{\nu_t} \left( -\nabla \cdot (\nu_t \, b_t) + c \nabla \cdot (\nu_t \nabla \log \nu_t) \right)\\
    &\stackrel{\text{(IBP)}}= \int \left \langle \nu_t \, b_t, \nabla \left(\Phi'(h_t) \frac{\mu_t}{\nu_t} \right)  \right \rangle - c \int \left \langle \nu_t \nabla \log \nu_t, \nabla \left(\Phi'(h_t) \frac{\mu_t}{\nu_t} \right) \right \rangle\\
    &=  \int \langle \mu_t \, b_t, \Phi''(h_t) \nabla \frac{\mu_t}{\nu_t} \rangle + \int \langle \nu_t b_t , \Phi' (h_t) \nabla \frac{\mu_t}{\nu_t} \rangle\\
    &\hspace{0.4cm} -c \int \left \langle \mu_t \nabla \log \nu_t, \Phi''(h_t) \nabla \frac{\mu_t}{\nu_t} \right \rangle  - c \int \left \langle \nu_t \nabla \log \nu_t, \Phi'(h_t) \nabla \frac{\mu_t}{\nu_t} \right \rangle 
\end{align*}

Therefore, combining the above, we see many terms cancel and we have the following:
\begin{align*}
    \partial_t\, \sfD_\Phi (\mu_t \dvert \nu_t) &= T_1 + T_2 - T_3\\
    &= c\int \left \langle \mu_t \nabla \log \nu_t, \Phi''(h_t) \nabla \frac{\mu_t}{\nu_t} \right \rangle  -c \int \left \langle \mu_t \nabla \log \mu_t, \Phi''(h_t) \nabla \frac{\mu_t}{\nu_t} \right \rangle  \\
    &= - c\int  \left \langle  \nabla \log \frac{\mu_t}{\nu_t},  \Phi''(h_t) \nabla \frac{\mu_t}{\nu_t}    \right \rangle \mu_t \\
    &= -c\, \E_{\mu_t} \left[ \left \langle  \nabla \log \frac{\mu_t}{\nu_t},  \Phi''(h_t) \nabla \frac{\mu_t}{\nu_t}    \right \rangle \right]\\
    &= -c\, \E_{\nu_t} \left[ \frac{\mu_t}{\nu_t} \left \langle  \nabla \log \frac{\mu_t}{\nu_t},  \Phi''(h_t) \nabla \frac{\mu_t}{\nu_t}    \right \rangle \right]  \\
    &= -c\, \E_{\nu_t} \left[ \left \langle  \nabla \frac{\mu_t}{\nu_t},  \Phi''(h_t) \nabla \frac{\mu_t}{\nu_t}    \right \rangle \right]  \\
    &= -c \, \FI_\Phi(\mu_t \dvert \nu_t)\,,
\end{align*}
which proves the desired statement.
\end{proof}

\section{From Mixing to Independence for Mutual Information}\label{app:IndependenceViaMixing}

We show that in the special case of $\Phi(x) = x \log x$, i.e. the standard mutual information, the contraction of mutual information can be bounded in terms of the mixing time in KL divergence.
Let $X_i \sim \rho_i$ for $i \geq 0$ be iterates along a Markov chain $\bP$ starting from $X_0 \sim \rho_0$ and denote the joint law of $(X_0, X_k)$ by $\rho_{0,k}$\,. If $\bP$ mixes in KL divergence from initial distributions taken to be point masses, i.e. $\delta_x$ for all $x \in \R^d$, then $\MI(\rho_{0,k})$ decreases at the same rate as the KL divergence to the stationary distribution.

\begin{lemma}\label{lem:IndependenceTimeleqMixingTime}
    Let $\bP$ be a Markov chain with stationary distribution $\nu$ and iterates $X_i \sim \rho_i$ for $i \geq 0$. Further denote the joint law of $(X_i, X_j)$ to be $\rho_{i.j}$\,. Given any error threshold $\epsilon \geq 0$, suppose there exists $k \in \bbN$ such that $\KL(\delta_x \bP^k \dvert \nu) \leq \epsilon$ for all $x \in \R^d$. Then, we have
    \[
    \MI(\rho_{0,k}) \leq \epsilon\,.
    \]
\end{lemma}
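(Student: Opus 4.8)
The plan is to express the mutual information as an average of conditional KL divergences, and then to replace the marginal $\rho_k$ by the stationary distribution $\nu$ via the variational (``golden formula'') characterization of mutual information. First I would apply Definition~\ref{def:PhiMutualInformation} with $\Phi(x) = x \log x$ to write $\MI(\rho_{0,k})$ as $\E_{x \sim \rho_0}\!\left[\KL\!\left(\rho_{0,k}^{X_k \mid X_0 = x} \dvert \rho_k\right)\right]$. Because $X_k$ is obtained from $X_0$ by running the Markov chain $\bP$ for $k$ steps, the Markov property identifies the conditional law of $X_k$ given $X_0 = x$ as $\delta_x \bP^k$, so this equals $\E_{x \sim \rho_0}\!\left[\KL(\delta_x \bP^k \dvert \rho_k)\right]$.

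The key step is the decomposition identity: for any reference distribution $Q$ on $\R^d$,
\[
\E_{x \sim \rho_0}\!\left[\KL\!\left(\delta_x \bP^k \dvert Q\right)\right] = \MI(\rho_{0,k}) + \KL(\rho_k \dvert Q),
\]
which follows by splitting $\log\frac{\delta_x \bP^k}{Q} = \log\frac{\delta_x \bP^k}{\rho_k} + \log\frac{\rho_k}{Q}$ and using $\rho_k = \E_{x \sim \rho_0}[\delta_x \bP^k]$ to average the second term. Since $\KL(\rho_k \dvert Q) \ge 0$, this exhibits $\rho_k$ as the minimizer of $Q \mapsto \E_{x \sim \rho_0}[\KL(\delta_x \bP^k \dvert Q)]$; in particular, choosing $Q = \nu$ and discarding the nonnegative term $\KL(\rho_k \dvert \nu)$ gives
\[
\MI(\rho_{0,k}) \le \E_{x \sim \rho_0}\!\left[\KL\!\left(\delta_x \bP^k \dvert \nu\right)\right].
\]
Finally, the mixing hypothesis states $\KL(\delta_x \bP^k \dvert \nu) \le \epsilon$ for every $x \in \R^d$, so the right-hand side is bounded by $\epsilon$, which yields the claim.

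The only delicate point I expect is justifying the decomposition identity together with the absolute-continuity requirements that keep every KL term well-defined — one needs $\delta_x \bP^k \ll \rho_k$ for $\rho_0$-a.e.\ $x$ (immediate since $\rho_k$ is the $\rho_0$-mixture of the $\delta_x \bP^k$) and enough regularity to interchange the expectation with the logarithm; but this is routine. I would also note that the argument is special to $\Phi(x) = x \log x$: it relies on the additive splitting of the logarithm and on $\rho_k$ being the exact minimizer of the averaged divergence, neither of which has an evident analogue for general $\Phi$, which is precisely why the approach does not extend to arbitrary $\Phi$-mutual information.
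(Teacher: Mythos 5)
Your proposal is correct and follows essentially the same route as the paper: writing $\MI(\rho_{0,k}) = \E_{x \sim \rho_0}[\KL(\delta_x \bP^k \dvert \rho_k)]$, splitting the logarithm to obtain $\E_{x \sim \rho_0}[\KL(\delta_x \bP^k \dvert \nu)] - \KL(\rho_k \dvert \nu)$, and discarding the nonnegative term. Your closing remark that the additive splitting is special to $\Phi(x) = x\log x$ matches the paper's own discussion of why the argument does not extend to general $\Phi$-divergences.
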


\begin{proof}
    Let $X_0 \sim \rho_0$. For any $i \geq 1$, we have $X_i \sim \rho_i =  \rho_0 \bP^i$. 
    It follows from Definition \ref{def:PhiMutualInformation} and direct calculation that
    \begin{align*}
        \MI(\rho_{0,k}) &\stackrel{\eqref{eq:MIPhi}}= \E_{x \sim \rho_0}[\KL(\rho_{k|0=x} \dvert \rho_k)] = \E_{x \sim \rho_0}[\KL(\delta_x \bP^k \dvert \rho_k)]\\
        &= \E_{x \sim \rho_0}\left[ \E_{\delta_x \bP^k} \left[ \log \frac{\delta_x \bP^k}{\rho_k}\right]\right] 
        = \E_{x \sim \rho_0}\left[ \E_{\delta_x \bP^k} \left[ \log \frac{\delta_x \bP^k}{\nu} - \log \frac{\rho_k}{\nu}\right]\right]\\
        &= \E_{x \sim \rho_0}[\KL(\delta_x \bP^k \dvert \nu)] - \E_{\rho_k} \left[  \log \frac{\rho_k}{\nu}\right] 
        = \E_{x \sim \rho_0}[\KL(\delta_x \bP^k \dvert \nu)] - \KL(\rho_k \dvert \nu) \\
        &\leq \E_{x \sim \rho_0}[\KL(\delta_x \bP^k \dvert \nu)]  \leq \epsilon\,,
    \end{align*}
    where the first inequality is by the non-negativity of KL divergence and the second inequality is due to the assumption that $\KL(\delta_x \bP^k \dvert \nu) \leq \epsilon$ for all $x \in \R^d$.
\end{proof}

We emphasize that the proof of Lemma~\ref{lem:IndependenceTimeleqMixingTime} does not extend to general $\Phi$-divergences and $\Phi$-mutual informations.
The key step in the proof of Lemma \ref{lem:IndependenceTimeleqMixingTime}, of expressing
\begin{equation}\label{eq:SeparateOutKL}
        \E_{x \sim \rho_0}[\KL(\delta_x \bP^k \dvert \rho_k)] = \E_{x \sim \rho_0}[\KL(\delta_x \bP^k \dvert \nu)] - \KL(\rho_k \dvert \nu)\,,
    \end{equation}
need not hold for general $\Phi$-divergences.
Indeed, \eqref{eq:SeparateOutKL} can alternatively be derived from the three-point identity of Bregman divergence.
It is known that KL divergence is the Bregman divergence of the entropy functional $\sfH(\rho)=-\E_{\rho} [\log \rho]$, that is $\KL (\rho \dvert \nu) = D_{-\sfH}(\rho; \nu)$.
So, KL divergence is special as it is both a $\Phi$-divergence and a Bregman divergence. 
In general, a $\Phi$-divergence need not be a Bregman divergence.

We would also like to mention that the requirement of having $\bP$ to mix from Dirac initializations is common for discrete-space chains~\citep{montenegro2006mathematical} but non-trivial in the setting of continuous-space Markov chains.
Mixing guarantees usually have dependence on the initial divergence to the target distribution, which becomes undefined in continuous-space (for example in KL divergence) when the initial distribution is not absolutely continuous with respect to the stationary distribution. Hence most mixing time guarantees for Langevin-type Markov chains implicitly require some regularity of the initial distribution. 
Challenges pertaining to the regularity of the initial distribution are prevalent in continuous-space sampling~\citep{boursier2023universal, koehler2024efficiently}, with corresponding results therefore restricted to TV distance or Wasserstein metric, which remain finite for Dirac distributions.

\section{Strong Data Processing Inequalities}\label{app:SDPI}

Recall the discussion on Strong Data Processing Inequalities (SDPIs) in Section~\ref{sec:Intro_SDPI}. Here, we provide a more comprehensive introduction to these inequalities. 
When studying SDPIs in $\Phi$-divergence, the key quantity to bound is the contraction coefficient in $\Phi$-divergence~\eqref{eq:ContractionCoefficientMainText}. This is formally defined as follows.

\begin{definition}\label{def:ContractionCoefficient_PhiDivergence}
    Let $\pi$ be a probability distribution and $\bP$ be a Markov kernel. Then the $\sfD_{\Phi}$-contraction coefficient $\varepsilon_{\sfD_{\Phi}}$ is defined as follows
    \begin{equation}\label{eq:PhiDivergenceContractionCoefficient}
        \varepsilon_{\sfD_{\Phi}} (\bP, \pi) \coloneqq \sup_{\mu\,:\, 0<\sfD_{\Phi}(\mu\dvert \pi) < \infty} \frac{\sfD_{\Phi}(\mu \bP \dvert \pi \bP)}{\sfD_{\Phi} (\mu \dvert \pi)}\,.
    \end{equation}
\end{definition}
The fact that~\eqref{eq:PhiDivergenceContractionCoefficient} is $\leq 1$ is guaranteed by the data processing inequality. We say that $(\bP, \pi)$ satisfies an SDPI in $\Phi$-divergence when $\varepsilon_{\sfD_{\Phi}} (\bP, \pi) < 1$.
Using Definition~\ref{def:ContractionCoefficient_PhiDivergence}, we state the definition of an SDPI in $\Phi$-divergence.

\begin{definition}\label{def:SDPI_PhiDivergence}
Let $\pi$ be a probability distribution and $\bP$ be a Markov kernel, and further define $\varepsilon_{\sfD_{\Phi}} (\bP, \pi)$ as in~\eqref{eq:PhiDivergenceContractionCoefficient}. Then we say that $(\bP, \pi)$ satisfies a strong data processing inequality in $\Phi$-divergence when $\varepsilon_{\sfD_{\Phi}} (\bP, \pi) < 1$\,. In particular, we have,
\begin{equation}\label{eq:SDPI_PhiDivergence}
    \sfD_\Phi(\mu \bP \dvert \pi \bP) \leq \varepsilon_{\sfD_{\Phi}} (\bP, \pi) \sfD_\Phi (\mu \dvert \pi)\,,
\end{equation}
where $\mu$ is any distribution such that $\sfD_{\Phi}(\mu \dvert \pi) < \infty$.
\end{definition}

Observe that SDPIs immediately yield mixing guarantees for the Markov chain. Suppose $\nu$ is stationary for $\bP$, i.e. $\nu \bP = \nu$, and $\mu$ is the initial distribution for the Markov chain. Then repeated application of~\eqref{eq:SDPI_PhiDivergence} yields that
\begin{equation}\label{eq:MixingViaSDPI}
    \sfD_{\Phi}(\mu \bP^k \dvert \nu) \leq \varepsilon_{\sfD_{\Phi}}^k (\bP, \nu)\,\sfD_{\Phi}(\mu \dvert \nu)\,.
\end{equation}
Therefore, so long as $\varepsilon_{\sfD_{\Phi}} (\bP, \nu) < 1$ (where $\nu$ is stationary for $\bP$), we can get quantitative mixing time bounds for $\bP$ in $\Phi$-divergence.
Bounding the contraction coefficient $\varepsilon_{\sfD_{\Phi}} (\bP, \nu)$ is the key challenge in SDPI-based mixing time analyses.

SDPIs can also be stated in terms of information. Suppose $U \to X \to Y$ forms a Markov chain, then the data processing inequality states that $\MI_{\Phi}(U; Y) \leq \MI_{\Phi}(U; X)$. 
SDPIs in $\Phi$-mutual information are stated in terms of the contraction coefficient in $\Phi$-mutual information. This is defined as follows.

\begin{definition}\label{def:ContractionCoefficient_PhiMutualInformation}
    Let $\pi$ be a probability distribution and $\bP$ be a Markov kernel. Furthermore, let $X \sim \pi$ and $Y \sim \pi \bP$\,. Then the $\MI_{\Phi}$-contraction coefficient $\varepsilon_{\MI_{\Phi}}$ is defined as follows
    \begin{equation}\label{eq:PhiMutualInformationContractionCoefficient}
        \varepsilon_{\MI_{\Phi}} (\bP, \pi) \coloneqq \sup_{\rho^{U \mid X}} \frac{\MI_{\Phi}(U; Y)}{\MI_{\Phi} (U; X)}\,,
    \end{equation}
    where $U \to X \to Y$ forms a Markov chain and the $\sup$ is over all conditional distributions of $U \mid X$, denoted as $\rho^{U \mid X}$. 
\end{definition}
Another way to think of the $\sup$ in~\eqref{eq:PhiMutualInformationContractionCoefficient} is over all Markov chains $ U \to X \to Y$ with fixed joint distribution $\rho^{XY}(x,y) = \pi(x) \bP (y \,|\, x)$.
Using Definition~\ref{def:ContractionCoefficient_PhiMutualInformation}, we state SDPIs in $\Phi$-mutual information. 

\begin{definition}\label{def:SDPI_PhiMutualInformation}
Let $\pi$ be a probability distribution and $\bP$ be a Markov kernel, and further define $\varepsilon_{\MI_{\Phi}} (\bP, \pi)$ as in~\eqref{eq:PhiMutualInformationContractionCoefficient}. Then we say that $(\bP, \pi)$ satisfies a strong data processing inequality in $\Phi$-mutual information when $\varepsilon_{\MI_{\Phi}} (\bP, \pi) < 1$\,. In particular, we have,
\begin{equation}\label{eq:SDPI_PhiMutualInformation}
    \MI_{\Phi}(U; Y) \leq \varepsilon_{\MI_{\Phi}} (\bP, \pi) \MI_{\Phi}(U; X)\,,
\end{equation}
where $U \to X \to Y$ is any Markov chain where $X \sim \pi$, and $Y \sim \pi \bP$.
\end{definition}

In Lemma~\ref{lem:RelatePhiDivergenceAndPhiMICoefficients}, we show that $\varepsilon_{\MI_{\Phi}}$ can always be upper bounded by $\varepsilon_{\sfD_{\Phi}}$. The converse also holds when $\Phi$ is bounded in a neighbourhood of $1$~\citep[Theorem~5.2]{raginsky2016strong}).
Hence, although we are interested in SDPIs in information, we will bound the contraction coefficients in divergence out of convenience.

\begin{lemma}\label{lem:RelatePhiDivergenceAndPhiMICoefficients}
    For any probability distribution $\pi$ and Markov kernel $\bP$, we have that
    \[
    \varepsilon_{\MI_{\Phi}} (\bP, \pi) \leq \varepsilon_{\sfD_{\Phi}} (\bP, \pi)\,.
    \]
\end{lemma}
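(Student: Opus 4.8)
The plan is to reduce the $\Phi$-mutual information contraction to the $\Phi$-divergence contraction by disintegrating along the auxiliary variable $U$. First I would fix an arbitrary Markov chain $U \to X \to Y$ with $X \sim \pi$ and $Y \sim \pi\bP$, as in the supremum defining $\varepsilon_{\MI_\Phi}(\bP,\pi)$ in~\eqref{eq:PhiMutualInformationContractionCoefficient}. Applying the disintegrated form of the $\Phi$-mutual information (the second equality in Definition~\ref{def:PhiMutualInformation}) with $U$ as the conditioning variable, I would write
\[
\MI_\Phi(U;Y) = \E_{u \sim \rho^U}\bigl[\sfD_\Phi(\rho^{Y\mid U=u} \dvert \rho^Y)\bigr], \qquad \MI_\Phi(U;X) = \E_{u \sim \rho^U}\bigl[\sfD_\Phi(\rho^{X\mid U=u} \dvert \rho^X)\bigr].
\]

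Next, I would exploit the Markov property $U \to X \to Y$: conditioned on $X$, the variable $Y$ is produced by the kernel $\bP$ independently of $U$. This yields the identities $\rho^{Y\mid U=u} = \rho^{X\mid U=u}\,\bP$ and $\rho^Y = \rho^X\bP = \pi\bP$, so that for each fixed $u$,
\[
\sfD_\Phi\bigl(\rho^{Y\mid U=u}\dvert \rho^Y\bigr) = \sfD_\Phi\bigl(\rho^{X\mid U=u}\,\bP \dvert \pi\bP\bigr).
\]
I would then apply the $\Phi$-divergence contraction bound~\eqref{eq:SDPI_PhiDivergence} with $\mu = \rho^{X\mid U=u}$ and reference distribution $\pi$ (legitimate since $\rho^X = \pi$), giving
\[
\sfD_\Phi\bigl(\rho^{X\mid U=u}\,\bP \dvert \pi\bP\bigr) \leq \varepsilon_{\sfD_\Phi}(\bP,\pi)\, \sfD_\Phi\bigl(\rho^{X\mid U=u}\dvert \pi\bigr).
\]
Taking the expectation over $u \sim \rho^U$ and identifying the right-hand side with $\varepsilon_{\sfD_\Phi}(\bP,\pi)\,\MI_\Phi(U;X)$ gives $\MI_\Phi(U;Y) \le \varepsilon_{\sfD_\Phi}(\bP,\pi)\,\MI_\Phi(U;X)$; dividing and taking the supremum over all conditional laws $\rho^{U\mid X}$ then yields the claim.

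The main obstacle is less an inequality than careful bookkeeping of the conditioning. The crucial non-trivial step is justifying $\rho^{Y\mid U=u} = \rho^{X\mid U=u}\,\bP$ from the Markov property, which is exactly what allows the single-$u$ divergence contraction to be applied uniformly inside the expectation. I would also address the degenerate values of $\sfD_\Phi(\rho^{X\mid U=u}\dvert \pi)$: when it equals $0$ the pointwise bound holds trivially since both sides vanish by the (weak) data processing inequality, and when it equals $+\infty$ the bound is vacuous, so~\eqref{eq:SDPI_PhiDivergence} may be invoked for every $u$ without affecting the integrated inequality.
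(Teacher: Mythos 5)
Your proposal is correct and follows essentially the same route as the paper's proof: disintegrate the $\Phi$-mutual information over $U$, use the Markov property to write $\rho^{Y\mid U=u} = \rho^{X\mid U=u}\bP$, apply the $\Phi$-divergence contraction coefficient pointwise in $u$, and take expectations. Your extra remarks on the degenerate cases $\sfD_\Phi(\rho^{X\mid U=u}\dvert\pi) \in \{0,+\infty\}$ are a welcome bit of added care that the paper leaves implicit.
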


\begin{proof}
    Consider a Markov chain $U \to X \to Y$ where $U \sim \rho^U$, $ X \sim \rho^X = \pi$ and $Y \sim \rho^Y = \pi \bP$.
    It follows from Definitions \ref{def:PhiMutualInformation} and \ref{def:ContractionCoefficient_PhiDivergence} that
    \begin{align*}
        \MI_{\Phi}(\rho^{UY}) &\stackrel{\eqref{eq:MIPhi}}= \E_{u \sim \rho^U} \left[\sfD_{\Phi}(\rho^{Y \mid U = u} \dvert \rho^Y)\right]\,,\\
        &= \E_{u \sim \rho^U} \left[  \sfD_{\Phi}(\rho^{X \mid U = u} \bP \dvert \rho^X \bP) \right]\,,\\
        &\stackrel{\eqref{eq:PhiDivergenceContractionCoefficient}}\leq \E_{u \sim \rho^U} \left[  \varepsilon_{\sfD_{\Phi}} (\bP, \pi) \sfD_{\Phi}(\rho^{X \mid U = u} \dvert \rho^X) \right]\,,\\
        &\stackrel{\eqref{eq:MIPhi}}= \varepsilon_{\sfD_{\Phi}}(\bP, \pi) \MI_{\Phi}(\rho^{UX})\,.
    \end{align*}
    Hence, the claim immediately follows from Definition \ref{def:ContractionCoefficient_PhiMutualInformation}.
\end{proof}

To study the contraction of $\Phi$-mutual information along multiple steps of a Markov chain, we have the following lemma.

\begin{lemma}\label{lem:IndependenceContractionBySDPI}
    Let $X_i \sim \rho_i$ be iterates along a Markov chain $\bP$ starting from $X_0 \sim \rho_0$. Then for any $\ell \geq 1$ and $k \geq \ell$
    \begin{equation*}\label{eq:IndependenceViaSDPI}
    \MI_{\Phi}(X_0; X_k) \leq  \prod_{i=\ell}^{k-1} \varepsilon_{\sfD_{\Phi}}(\bP, \rho_i) \, \MI_{\Phi}(X_0; X_\ell)\,.
\end{equation*}
\end{lemma}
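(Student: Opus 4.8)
The plan is to reduce the multi-step bound to a telescoping product of single-step contractions of $\Phi$-mutual information. The key observation is that since $X_0, X_1, X_2, \dots$ are iterates of the Markov chain $\bP$, for each index $i$ the triple $X_0 \to X_i \to X_{i+1}$ itself forms a Markov chain: conditioned on $X_i$, the next iterate $X_{i+1}$ is independent of the initial state $X_0$, and its conditional law is exactly the kernel $\bP$ applied to $X_i$. Moreover $X_i \sim \rho_i$ and $X_{i+1} \sim \rho_i \bP = \rho_{i+1}$, so this is precisely a Markov chain of the form required in the definition of the contraction coefficient $\varepsilon_{\MI_{\Phi}}(\bP, \rho_i)$ (Definition~\ref{def:ContractionCoefficient_PhiMutualInformation}), with the roles $U = X_0$, $X = X_i$, $Y = X_{i+1}$, and base distribution $\pi = \rho_i$.

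Granting this structure, the single-step SDPI in $\Phi$-mutual information (Definition~\ref{def:SDPI_PhiMutualInformation}) gives $\MI_{\Phi}(X_0; X_{i+1}) \leq \varepsilon_{\MI_{\Phi}}(\bP, \rho_i)\,\MI_{\Phi}(X_0; X_i)$, and by Lemma~\ref{lem:RelatePhiDivergenceAndPhiMICoefficients} we may further upper bound $\varepsilon_{\MI_{\Phi}}(\bP, \rho_i) \leq \varepsilon_{\sfD_{\Phi}}(\bP, \rho_i)$. This yields the one-step estimate
\[
\MI_{\Phi}(X_0; X_{i+1}) \leq \varepsilon_{\sfD_{\Phi}}(\bP, \rho_i)\, \MI_{\Phi}(X_0; X_i)\,.
\]
I would then iterate this inequality over $i = \ell, \ell+1, \dots, k-1$ and multiply the resulting bounds, producing the telescoping product $\prod_{i=\ell}^{k-1}\varepsilon_{\sfD_{\Phi}}(\bP, \rho_i)$ multiplying $\MI_{\Phi}(X_0; X_\ell)$, which is the claimed inequality. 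Cleanest is a short induction on $k \geq \ell$, with base case $k=\ell$ the trivial identity.

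The argument is essentially a repeated application of the one-step contraction, so the only substantive point to verify carefully is the Markov-chain structure of $X_0 \to X_i \to X_{i+1}$, and in particular that the correct base distribution in each application of the contraction coefficient is $\rho_i$, the law of the \emph{current} iterate, rather than the stationary distribution $\nu$. This is exactly why the product runs over $\varepsilon_{\sfD_{\Phi}}(\bP, \rho_i)$ along the trajectory, matching the discussion in Section~\ref{sec:Intro_SDPI}; it is this trajectory-dependence that later forces us to control the $\Phi$-Sobolev constants of the $\rho_i$.

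Alternatively, one could avoid invoking Lemma~\ref{lem:RelatePhiDivergenceAndPhiMICoefficients} as a black box and instead inline its short argument at each step: expand $\MI_{\Phi}(X_0; X_{i+1}) = \E_{x \sim \rho_0}[\sfD_{\Phi}(\rho_{i+1 \mid 0 = x} \dvert \rho_{i+1})]$ via Definition~\ref{def:PhiMutualInformation}, observe by the Markov property that $\rho_{i+1 \mid 0 = x} = \rho_{i \mid 0 = x}\bP$ and $\rho_{i+1} = \rho_i \bP$, apply the $\Phi$-divergence SDPI~\eqref{eq:SDPI_PhiDivergence} pointwise in $x$ to get $\sfD_{\Phi}(\rho_{i \mid 0 = x}\bP \dvert \rho_i \bP) \leq \varepsilon_{\sfD_{\Phi}}(\bP, \rho_i)\,\sfD_{\Phi}(\rho_{i \mid 0 = x}\dvert \rho_i)$, and then take the expectation over $x$ to recover $\varepsilon_{\sfD_{\Phi}}(\bP, \rho_i)\,\MI_{\Phi}(X_0; X_i)$. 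Either route gives the same one-step bound, and the telescoping is immediate.
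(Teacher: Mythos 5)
Your proposal is correct and follows essentially the same route as the paper: the paper's proof applies the $\Phi$-mutual-information SDPI~\eqref{eq:SDPI_PhiMutualInformation} repeatedly over the steps $i = \ell, \dots, k-1$ and then invokes Lemma~\ref{lem:RelatePhiDivergenceAndPhiMICoefficients} to replace $\varepsilon_{\MI_{\Phi}}(\bP, \rho_i)$ by $\varepsilon_{\sfD_{\Phi}}(\bP, \rho_i)$, exactly as you do. Your extra care in verifying the Markov structure of $X_0 \to X_i \to X_{i+1}$ and your inlined alternative are just more explicit versions of the same argument.
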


\begin{proof}
    Applying~\eqref{eq:SDPI_PhiMutualInformation} $k-\ell$ times reveals that
    \begin{equation*}
    \MI_{\Phi}(X_0; X_k) \leq  \prod_{i=\ell}^{k-1} \varepsilon_{\MI_{\Phi}}(\bP, \rho_i) \, \MI_{\Phi}(X_0; X_\ell)\,.
\end{equation*}
From Lemma~\ref{lem:RelatePhiDivergenceAndPhiMICoefficients} we know that $\varepsilon_{\MI_{\Phi}}(\bP, \rho_i) \leq \varepsilon_{\sfD_{\Phi}}(\bP, \rho_i)$. Therefore, we get the desired claim.
\end{proof}

Therefore, we can see from Lemma~\ref{lem:IndependenceContractionBySDPI} that to compute the contraction of information, we need to compute the contraction coefficients along the trajectory of the Markov chain $\varepsilon_{\MI_{\Phi}}(\bP, \rho_i)$ as opposed to just the coefficient for the stationary distribution $\varepsilon_{\sfD_{\Phi}} (\bP, \nu)$ for mixing time~\eqref{eq:MixingViaSDPI}. This analysis along the trajectory makes studying the information functional more challenging.

\section{Regularity-Based Bounds for the Standard Mutual Information}

We now consider the special case of $\Phi(x) = x \log x$. We present an alternate set of results studying the convergence of mutual information by exploiting the regularity properties of the Markov chains. Regularity results for a Markov chain seek to bound the change in the initial data after the Markov chain is applied to it. For example for $x,y \in \R^d$, Markov kernel $\bP$, and $C > 0$, bounds of the form
\[
\KL(\delta_x \bP \dvert \delta_y \bP) \leq C \|x-y\|_2^2,
\]
arise frequently in the analysis of Hamiltonian Monte Carlo~\citep{bou2023mixing} and other Markov chains~\citep{bobkov2001hypercontractivity, altschuler2023shifted}.
Our goal is to study the convergence of the mutual information and we do so by relating it to the KL divergence via $\MI(\rho^{XY}) = \E_{\rho^X}[\KL(\rho^{Y \mid X} \dvert \rho^Y)]$ (Definition~\ref{def:PhiMutualInformation}) and using regularity bounds for these Markov chains in KL divergence.
Before proceeding, we define the Wasserstein-2 distance, which arises in the following analyses.

\begin{definition}\label{def:W2}
    The $\cW_2$ distance between probability distributions $\mu$ and $\nu$ is given by
    \[
    \cW_2(\mu, \nu) \coloneqq \inf_{\gamma \in \cC(\mu, \nu)} \Big( \E_{(x,y) \sim \gamma}\big[\|x-y\|_2^2\big]\Big)^{1/2},
    \]
    where $\cC(\mu, \nu)$ denotes the set of couplings of $\mu$ and $\nu$. 
\end{definition}

We study the Langevin dynamics, ULA, and Proximal Sampler in Appendices~\ref{app:RegularityLD},~\ref{app:RegularityULA}, and~\ref{app:RegularityPS} respectively. In Appendix~\ref{app:RegularityLD_AlternateTheorem} we present Theorem~\ref{thm:LangevinRegSLC} describing the regularity properties of the Langevin dynamics for strongly log-concave targets. We de not utilize Theorem~\ref{thm:LangevinRegSLC} when studying mutual information but include it as it might be of independent interest.

\subsection{Regularity-Based Bounds for Mutual Information along Langevin Dynamics}\label{app:RegularityLD}

\begin{theorem}
\label{thm:MI_LD_regularize}
Let $X_t \sim \rho_t$ evolve following the Langevin dynamics \eqref{eq:LangevinDynamics} from $X_0 \sim \rho_0$ and let the joint law of $(X_0, X_t)$ be $\rho_{0,t}$\,. The mutual information $\MI(\rho_{0,t})$ satisfies the following:

\begin{enumerate}
    \item[(a)] if $\nu$ is weakly log-concave, then for all $t>0$
        \[
        \MI(\rho_{0,t}) \le \frac{1}{2t} \Var(X_0).
        \]
    \item[(b)] if $\nu$ is $\alpha$-strongly log-concave for some $\alpha > 0$, then for all $t > 0$
    \begin{equation}\label{eq:RegularityLDBoundForSLC}
        \MI(\rho_{0,t}) \le \frac{\alpha}{e^{2\alpha t}-1} \Var(X_0).
    \end{equation}
\end{enumerate}
\end{theorem}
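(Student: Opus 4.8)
The plan is to reduce the mutual-information bound to a pointwise regularity estimate for the Langevin semigroup together with a convexity argument. Writing $\rho_{t\mid 0=z}$ for the law of $X_t$ given $X_0=z$ and $\rho_t$ for the time-$t$ marginal, Definition~\ref{def:PhiMutualInformation} (with $\Phi(x)=x\log x$) gives
\[
\MI(\rho_{0,t}) = \E_{x\sim\rho_0}\big[\KL(\rho_{t\mid 0=x}\dvert\rho_t)\big].
\]
Since $\rho_t = \E_{y\sim\rho_0}[\rho_{t\mid 0=y}]$ is the mixture over the starting point of the point-mass-initialized marginals, and since $\nu\mapsto\KL(\mu\dvert\nu)$ is convex, Jensen's inequality yields $\KL(\rho_{t\mid 0=x}\dvert\rho_t)\le\E_{y\sim\rho_0}[\KL(\rho_{t\mid 0=x}\dvert\rho_{t\mid 0=y})]$. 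Hence
\[
\MI(\rho_{0,t})\le\E_{x,y\sim\rho_0\otimes\rho_0}\big[\KL(\rho_{t\mid 0=x}\dvert\rho_{t\mid 0=y})\big].
\]
For every $t>0$ the law $\rho_{t\mid 0=z}$ is absolutely continuous because of the Brownian smoothing, so each KL term is finite and the reduction is legitimate.

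The key input is the regularity estimate
\[
\KL(\rho_{t\mid 0=x}\dvert\rho_{t\mid 0=y})\le C_t\,\|x-y\|^2, \qquad C_t = \frac{\alpha}{2(e^{2\alpha t}-1)},
\]
valid under $\nabla^2 f\succeq\alpha I$ (i.e.\ $\alpha$-SLC), with $C_t=\tfrac{1}{4t}$ in the weakly log-concave case $\alpha=0$, which is exactly the $\alpha\to0$ limit of the SLC constant. Granting this, I would finish by averaging: for $x,y$ drawn independently from $\rho_0$ one has $\E[\|x-y\|^2]=2\Var(X_0)$, so $\MI(\rho_{0,t})\le 2C_t\Var(X_0)$, which is precisely $\tfrac{1}{2t}\Var(X_0)$ for part (a) and $\tfrac{\alpha}{e^{2\alpha t}-1}\Var(X_0)$ for part (b). Note that no $\Phi$-Sobolev inequality for $\rho_t$ is needed anywhere in this route, which is what makes the bound a genuine regularization statement.

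To prove the regularity estimate I would couple the two processes $X_s,Y_s$ started from $x,y$ through the same Brownian motion. Under $\nabla^2 f\succeq\alpha I$ the drift is contractive, $\tfrac{d}{ds}\|X_s-Y_s\|^2\le-2\alpha\|X_s-Y_s\|^2$, so the separation decays like $e^{-\alpha s}$ while the Brownian part supplies smoothing of order $(e^{2\alpha t}-1)/\alpha$; converting this coupling/curvature control into a relative-entropy bound between the two point-mass-initialized marginals — via a Gaussian-type comparison, equivalently the local (reverse) logarithmic Sobolev inequality of the semigroup under the curvature condition — produces exactly the constant $C_t$. The Ornstein--Uhlenbeck case $f=\tfrac{\alpha}{2}\|\cdot\|^2$, where $\rho_{t\mid 0=z}=\N(e^{-\alpha t}z,\tfrac{1-e^{-2\alpha t}}{\alpha}I)$ and the KL is computed in closed form, shows $C_t$ is sharp and hence that both bounds are tight. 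The main obstacle is this last conversion: the synchronous coupling controls the transport/$\cW_2$ distance directly, whereas the theorem requires a relative-entropy bound, so the crux is to obtain the KL regularity estimate with the sharp constant $C_t$ rather than through a lossy transport-to-entropy passage.
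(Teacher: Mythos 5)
Your proposal is correct and takes essentially the same route as the paper: both reduce $\MI(\rho_{0,t})=\E_{x\sim\rho_0}[\KL(\delta_x\bP_t\dvert\rho_0\bP_t)]$ to a curvature-based KL regularity estimate and then average using $\E_{x,y\sim\rho_0\otimes\rho_0}[\|x-y\|^2]=2\Var(X_0)$, and the estimate you flag as the crux is precisely what the paper imports by citation~\citep[Corollary~1]{altschuler2023shifted} (equivalently the dual log-Harnack inequality of~\citet{bobkov2001hypercontractivity}), so it does not need to be reproved. Your Jensen/convexity step is simply the standard way the Dirac-to-measure form $\KL(\delta_x\bP_t\dvert\rho_0\bP_t)\le C_t\,\cW_2^2(\delta_x,\rho_0)$ used in the paper follows from the Dirac-to-Dirac form, so the two arguments coincide up to where that one inequality is unpacked.
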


\begin{proof}
    We first consider case when $\nu$ is $\alpha$-strongly log-concave as the weakly log-concave case follows by taking the limit $\alpha \to 0$.
    Let $\bP_t$ be the Markov kernel associated with the Langevin dynamics. 
    Note that $\rho_{t \mid 0}(\cdot \mid x_0) = \delta_{x_0} \bP_t$ is the law at time $t$ if we start the Langevin dynamics from initial distribution $\delta_{x_0}$, for any fixed $x_0 \in \R^d$.
    It then follows from~\cite[Corollary~1]{altschuler2023shifted}\footnote{Also note that~\cite[Eq. (4.5)]{bobkov2001hypercontractivity} is equivalent to~\cite[Corollary~1]{altschuler2023shifted} via the duality between log-Harnack and reverse transport inequalities as explained in~\cite[Section~VI.B]{altschuler2023shifted}.} that
    \[
    \KL(\delta_{x_0} \bP_t \dvert \rho_0 \bP_t) \le \frac{\alpha}{2(e^{2\alpha t}-1)} \cW_2^2(\delta_{x_0}, \rho_0)
= \frac{\alpha}{2(e^{2\alpha t}-1)} \E_{X\sim\rho_0}\left[\|X-x_0\|^2\right].
\]
Using Definition~\ref{def:PhiMutualInformation} and the above inequality, we have
\[
    \MI(\rho_{0,t})
    = \E_{x_0 \sim \rho_0}[\KL(\delta_{x_0} \bP_t \dvert \rho_0 \bP_t)] 
    \le \frac{\alpha}{2(e^{2\alpha t}-1)} \E_{X,x_0\sim\rho_0}\left[\|X-x_0\|^2 \right]
    = \frac{\alpha}{e^{2\alpha t}-1} \Var(X_0),
\]
where in the above, $X, x_0 \sim \rho_0$ are independent, and we have used the variance formula $\Var (X_0) = \frac{1}{2} \E_{X,x_0\sim\rho_0}\left[\|X-x_0\|^2 \right]$.
Taking $\alpha \to 0$ yields the weakly log-concave result.

\end{proof}

Observe that Theorem~\ref{thm:MI_LD_regularize} does not require a $\Phi$-Sobolev inequality assumption on $\rho_s$ and also works for the weakly log-concave case. 
Theorem~\ref{thm:MI_LD_regularize}(b) with $t = s$ can be used to bound $\MI(\rho_{0,s})$ from Theorem~\ref{thm:LangevinDynamicsMain} (for the case of $\Phi(x) = x \log x$).

\subsection{Regularity-Based Bounds for Mutual Information along ULA}\label{app:RegularityULA}

\begin{theorem}\label{thm:MI_ULA_regularize}
Suppose $\nu$ is $\alpha$-strongly log-concave and $L$-smooth for some $0 < \alpha \leq L < \infty$. 
    Let $X_k \sim \rho_k$ evolve following ULA~\eqref{eq:ULA} with step-size $\eta \leq 1/L$ from $X_0 \sim \rho_0$.
Define $\gamma = 1-\alpha \eta \in (0,1)$ and let the joint law of $(X_0, X_k)$ be $\rho_{0,k}$\,.
Then for all $k \ge 1$, we have 
\begin{equation*}
    \MI(\rho_{0,k}) \le
    \frac{\alpha \, \gamma^{2k}}{1-\gamma^{2k}} \Var(X_0).
\end{equation*}
\end{theorem}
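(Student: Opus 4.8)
The plan is to mirror the proof of Theorem~\ref{thm:MI_LD_regularize} for the Langevin dynamics, replacing the continuous-time reverse transport inequality by its discrete-time (ULA) counterpart. Writing $\bP$ for the ULA kernel, Definition~\ref{def:PhiMutualInformation} gives
\[
\MI(\rho_{0,k}) = \E_{x_0\sim\rho_0}\bigl[\KL(\delta_{x_0}\bP^k \dvert \rho_0\bP^k)\bigr],
\]
so everything reduces to a regularity bound controlling $\KL(\delta_{x_0}\bP^k \dvert \rho_0\bP^k)$ by the squared transport cost between the two initializations. Concretely, I would aim to prove the reverse transport inequality
\begin{equation}\label{eq:ULAregplan}
\KL(\delta_x\bP^k \dvert \delta_y\bP^k) \le \frac{\alpha}{2}\,\frac{\gamma^{2k}}{1-\gamma^{2k}}\,\|x-y\|^2
\end{equation}
for all $x,y\in\R^d$, after which the theorem follows mechanically. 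By convexity of KL divergence in its second argument, $\KL(\delta_{x_0}\bP^k \dvert \rho_0\bP^k) \le \E_{y\sim\rho_0}[\KL(\delta_{x_0}\bP^k \dvert \delta_y\bP^k)]$, and then taking the outer expectation over $x_0\sim\rho_0$ and using the identity $\E_{x_0,y\sim\rho_0}[\|x_0-y\|^2]=2\Var(X_0)$ for independent $x_0,y$ turns the right-hand side of~\eqref{eq:ULAregplan} into exactly $\frac{\alpha\gamma^{2k}}{1-\gamma^{2k}}\Var(X_0)$.

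To establish~\eqref{eq:ULAregplan}, I would exploit the structure $\rho\bP = F_\#\rho * \N(0,2\eta I)$ with $F(x)=x-\eta\nabla f(x)$. Since $\alpha I\preceq\nabla^2 f\preceq LI$ and $\eta\le 1/L$, we have $0\preceq\nabla F = I-\eta\nabla^2 f\preceq(1-\alpha\eta)I$, so $F$ is a $\gamma$-Lipschitz contraction with $\gamma=1-\alpha\eta\in(0,1)$; each ULA step is thus a $\gamma$-contraction followed by convolution with a Gaussian of covariance $2\eta I$. The key tool is the shifted-divergence (shifted composition) technique of \citet{altschuler2023shifted}: along the two chains started from $\delta_x$ and $\delta_y$, I would track a pair consisting of an accumulated divergence and a ``shift budget'' $z_j$ measuring a residual deterministic translation. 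Starting from $z_0=\|x-y\|$ with zero divergence (the two point masses are exact translates), each step updates the budget as $z_j=\gamma z_{j-1}-a_j$, where the contraction shrinks the budget by $\gamma$ and the Gaussian convolution may be used to ``spend'' an amount $a_j\ge 0$ of shift at divergence cost $a_j^2/(4\eta)$ (the cost of translating one $\N(\cdot,2\eta I)$ onto another). Requiring $z_k=0$ then yields $\KL(\delta_x\bP^k \dvert \delta_y\bP^k)\le \tfrac{1}{4\eta}\sum_{j=1}^k a_j^2$ subject to these linear constraints.

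It remains to minimize $\sum_{j=1}^k a_j^2$ over schedules $(a_j)$ with $z_0=\|x-y\|$ and $z_k=0$. This is a one-dimensional discrete quadratic control problem whose Euler--Lagrange condition is $a_j=\gamma a_{j+1}$, giving a geometric schedule $a_j\propto\gamma^{-j}$; solving the recursion and summing the geometric series produces the prefactor $\frac{1-\gamma^2}{4\eta}\cdot\frac{\gamma^{2k}}{1-\gamma^{2k}}$, and since $1-\gamma^2=2\alpha\eta-\alpha^2\eta^2\le 2\alpha\eta$ we have $\frac{1-\gamma^2}{4\eta}\le\frac{\alpha}{2}$, which is exactly~\eqref{eq:ULAregplan}. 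I expect the main obstacle to be the regularity bound~\eqref{eq:ULAregplan} itself — specifically, setting up the shifted-divergence recursion with correct bookkeeping of the contraction versus the shift-spending steps and verifying the per-step Gaussian cost $a_j^2/(4\eta)$. Once that is in place, the schedule optimization and the final reduction via convexity and the variance identity are routine.
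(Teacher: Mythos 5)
Your proposal is correct and follows essentially the same route as the paper: both reduce $\MI(\rho_{0,k}) = \E_{x_0\sim\rho_0}[\KL(\delta_{x_0}\bP^k \dvert \rho_0\bP^k)]$ to the shifted-divergence reverse transport inequality for the ULA kernel from \citet{altschuler2023shifted} and then apply the identity $\E_{X,x_0\sim\rho_0}[\|X-x_0\|^2]=2\Var(X_0)$, with matching constants since $1-\gamma^2\le 2\alpha\eta$. The only cosmetic difference is that the paper directly invokes \citet[Theorem~6]{altschuler2023shifted} in the form $\KL(\delta_{x_0}\bP^k\dvert\rho_0\bP^k)\le \frac{(1-\gamma^2)\gamma^{2k}}{4\eta(1-\gamma^{2k})}\cW_2^2(\delta_{x_0},\rho_0)$, whereas you re-derive the point-to-point version via the shift-budget recursion and then pass to general $\rho_0$ by convexity of KL in its second argument; both steps are valid and yield the identical bound.
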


\begin{proof}
The proof is similar to that of Theorem \ref{thm:MI_LD_regularize}.
Let $\bP^k$ denote the kernel for $k$-step of ULA.
It follows from \cite[Theorem~6]{altschuler2023shifted} with $L=1-\eta\alpha$ that
\[
    \KL(\delta_{x_0} \bP^k \dvert \rho_0 \bP^k) \le \frac{1-L^2}{4\eta (L^{-2k} -1)}\cW_2^2(\delta_{x_0}, \rho_0)  \le \frac{\alpha}{2 [(1-\eta\alpha)^{-2k} -1]} \E_{X\sim\rho_0}\left[\|X-x_0\|^2\right]. 
\]
Using Definition~\ref{def:PhiMutualInformation} and the above inequality, we have
\begin{align*}
    \MI(\rho_{0,k}) &= 
     \E_{x_0\sim \rho_0}[\KL(\delta_{x_0} \bP^k \dvert \rho_0 \bP^k)]\\
    &\le \frac{\alpha}{2 [(1-\eta\alpha)^{-2k} -1]} \E_{X,x_0\sim\rho_0}\left[\|X-x_0\|^2 \right] \\
    &= \frac{\alpha}{ [(1-\eta\alpha)^{-2k} -1]} \Var(X_0)\,,
\end{align*}
where in the above, $X, x_0 \sim \rho_0$ are independent, and we use the variance formula $\Var (X_0) = \frac{1}{2} \E_{X,x_0\sim\rho_0}\left[\|X-x_0\|^2 \right]$.
\end{proof}

Theorem~\ref{thm:MI_ULA_regularize} with $k=\ell$ provides a bound on $\MI(\rho_{0,\ell})$ from Theorem~\ref{thm:ULA_Main} and also does not require $\rho_\ell$ to satisfy a $\Phi$-Sobolev inequality assumption.

\subsection{Regularity-Based One-step Bounds for the Proximal Sampler}\label{app:RegularityPS}

We wish to bound $\MI_{}(\rho^X_{0,1})$ appearing in Theorem~\ref{thm:ProximalMain} when $\ell = 1$.

\begin{lemma}\label{lem:1StepBoundProximal}
    Consider the Proximal Sampler starting from $X_0 \sim \rho^X_0$ with step-size $\eta >0$. Then we have that
    \begin{equation*}
        \MI(\rho^X_{0,1}) \leq \frac{1}{2\eta} \Var(X_0).
    \end{equation*}
\end{lemma}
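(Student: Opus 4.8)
The plan is to reduce the two-step Proximal Sampler to its forward (Gaussian) step via the data processing inequality, and then control the mutual information across the resulting additive Gaussian channel with the sharp capacity-type bound coming from the maximum-entropy property of the Gaussian. Recall that in this appendix we are in the special case $\Phi(x) = x\log x$, so $\MI$ is the standard mutual information.

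First I would exploit the structure of the Proximal Sampler~\eqref{eqs:ProximalSampler}. Starting from $X_0 \sim \rho_0^X$, the forward step $\bP_{\prox}^+$ gives $Y_0 \mid X_0 \sim \N(X_0, \eta I)$ and the backward step $\bP_{\prox}^-$ gives $X_1 \mid Y_0 \sim \nu^{X \mid Y = Y_0}$; in particular $X_0 \to Y_0 \to X_1$ is a Markov chain, since $X_1$ depends on $X_0$ only through $Y_0$. The data processing inequality for mutual information then yields $\MI(\rho_{0,1}^X) = \MI(X_0; X_1) \le \MI(X_0; Y_0)$, so it suffices to bound the mutual information across the Gaussian channel $Y_0 = X_0 + \sqrt{\eta}\, Z$ with $Z \sim \N(0,I)$ independent of $X_0$.

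Next I would bound $\MI(X_0; Y_0)$ by writing $\MI(X_0; Y_0) = h(Y_0) - h(Y_0 \mid X_0)$, where $h$ denotes differential entropy. Since $Y_0 \mid X_0 = x_0 \sim \N(x_0, \eta I)$, the conditional entropy equals the noise entropy $h(Y_0 \mid X_0) = \tfrac{d}{2}\log(2\pi e \eta)$, independent of $x_0$. Bounding $h(Y_0)$ by the entropy of a Gaussian with the same covariance $\Cov(Y_0) = \Cov(X_0) + \eta I$ gives $\MI(X_0; Y_0) \le \tfrac12 \log\det\!\big(I + \tfrac{1}{\eta}\Cov(X_0)\big)$. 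Finally, the elementary inequality $\log\det(I + M) \le \Tr(M)$ for positive semidefinite $M$ (i.e.\ $\log(1+\lambda)\le\lambda$ applied to the eigenvalues) yields $\tfrac12\log\det\!\big(I + \tfrac{1}{\eta}\Cov(X_0)\big) \le \tfrac{1}{2\eta}\Tr\Cov(X_0) = \tfrac{1}{2\eta}\Var(X_0)$. Chaining the three bounds proves the claim.

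The hard part is obtaining the \emph{sharp} constant $\tfrac{1}{2\eta}$. The crude route mirroring the regularity/reverse-transport arguments used for the Langevin dynamics (Theorem~\ref{thm:MI_LD_regularize}) and ULA (Theorem~\ref{thm:MI_ULA_regularize}) in this appendix — for instance bounding $\KL(\delta_{x_0}\bP_{\prox}^+ \dvert \rho_0^X \bP_{\prox}^+)$ by the convexity of $\KL$ in its second argument, giving $\E_{X\sim\rho_0^X}[\|x_0 - X\|^2/(2\eta)]$ and hence $\MI(X_0;Y_0) \le \tfrac{1}{\eta}\Var(X_0)$ — loses a factor of two. The maximum-entropy argument is precisely what recovers the tight constant (it is exact when $X_0$ is Gaussian, matching the tightness computation of Appendix~\ref{app:OU_PS}). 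A minor technical point is to justify that all differential entropies are finite: this holds because $\rho_0^X$ has finite second moment, so $\Cov(Y_0)$ is finite, while $\rho_{Y_0} = \rho_0^X * \N(0,\eta I)$ has a density bounded above by $(2\pi\eta)^{-d/2}$, giving $h(Y_0) \ge \tfrac{d}{2}\log(2\pi\eta) > -\infty$; thus $\MI(X_0; Y_0) = h(Y_0) - \tfrac{d}{2}\log(2\pi e \eta)$ is well defined.
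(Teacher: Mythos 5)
Your proposal is correct, and its first step coincides with the paper's: both reduce to the forward step via the Markov chain $X_0 \to Y_0 \to X_1$ and the data processing inequality, $\MI(\rho^X_{0,1}) \le \MI(X_0;Y_0)$. Where you diverge is in how $\MI(X_0;Y_0)$ is bounded. The paper finishes in one line by observing that $Y_0\mid X_0$ is a heat-flow evolution of $\rho_0^X$ and citing Theorem~\ref{thm:MI_LD_regularize}(a) with $t=\eta$, i.e.\ it leans on the regularity (reverse transport / log-Harnack) machinery already developed for the Langevin dynamics. You instead give a self-contained information-theoretic argument: write $\MI(X_0;Y_0)=h(Y_0)-h(Y_0\mid X_0)$, bound $h(Y_0)$ by the entropy of the Gaussian with covariance $\Cov(X_0)+\eta I$, and use $\log\det(I+M)\le \Tr(M)$ to land on $\tfrac{1}{2\eta}\Var(X_0)$. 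Each step checks out, including the finiteness of the differential entropies. Your route has two advantages worth noting. First, it avoids any dependence on the external regularity results. Second, it sidesteps a time-scaling subtlety in the paper's citation: Theorem~\ref{thm:MI_LD_regularize} is stated for the Langevin SDE with diffusion coefficient $\sqrt{2}$, under which time $t$ produces a $\N(0,2tI)$ convolution, so matching the forward step's $\N(0,\eta I)$ noise literally requires $t=\eta/2$ and would yield only $\tfrac{1}{\eta}\Var(X_0)$ --- the factor-of-two loss you anticipate in your last paragraph. Your max-entropy argument recovers the stated sharp constant $\tfrac{1}{2\eta}$ directly (and is tight for Gaussian $X_0$, consistent with Appendix~\ref{app:OU_PS}), so if anything it places the lemma on firmer footing than the paper's own one-liner.
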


\begin{proof}
    Note that $\MI_{}(\rho^X_{0,1}) = \MI(X_0, X_1) \leq \MI (X_0; Y_0)$ due to the data processing inequality. By the Proximal Sampler~\eqref{eqs:ProximalSampler}, we know that the law of $Y_0 \mid X_0$ is obtained via evolving $\rho^X_0$ along the heat flow for time $\eta$. Given this, it follows from Theorem~\ref{thm:MI_LD_regularize}(a) with $t=\eta$ that $\MI(X_0; Y_0) \leq \frac{1}{2\eta} \Var(X_0)$.
\end{proof}

This yields the following corollary.

\begin{corollary}\label{cor:1StepAndInitialBoundProximal}
    Consider the same conditions as Theorem~\ref{thm:ProximalMain} with $\ell = 1$, i.e. suppose $\rho_1^X$ satisfies a $\Phi$-Sobolev inequality with optimal constant $\alpha_{\PSI}(\rho_1)$. Then
    \begin{equation*}
        \MI_{}(\rho^X_{0,k}) \leq \frac{\Var(X_0)}{2\eta(1 + \eta \min \{ \alpha, \alpha_{\PSI}(\rho^X_1) \})^{2(k-1)}}\,.
    \end{equation*}
\end{corollary}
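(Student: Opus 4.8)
The plan is to simply chain together Theorem~\ref{thm:ProximalMain} (specialized to $\ell = 1$, and to the standard mutual information corresponding to $\Phi(x) = x\log x$) with the one-step regularity bound of Lemma~\ref{lem:1StepBoundProximal}. First I would instantiate Theorem~\ref{thm:ProximalMain} at $\ell = 1$, which yields
\[
\MI(\rho^X_{0,k}) \leq \frac{\MI(\rho^X_{0,1})}{\left(1 + \eta \min\{\alpha, \alpha_{\PSI}(\rho^X_1)\}\right)^{2(k-1)}}
\]
for all $k \geq 1$. This reduces the problem to controlling the single-step mutual information $\MI(\rho^X_{0,1})$ in the numerator.

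Next I would invoke Lemma~\ref{lem:1StepBoundProximal}, which gives $\MI(\rho^X_{0,1}) \leq \frac{1}{2\eta}\Var(X_0)$. Substituting this bound into the displayed inequality immediately produces the claimed estimate. There is no genuine obstacle here, since both required ingredients are already established: the exponential contraction of $\Phi$-mutual information along the Proximal Sampler from Theorem~\ref{thm:ProximalMain}, and the regularity-based control of the initial mutual information, which itself follows in Lemma~\ref{lem:1StepBoundProximal} from the data processing inequality ($\MI(X_0;X_1) \leq \MI(X_0;Y_0)$) together with the heat-flow regularity bound of Theorem~\ref{thm:MI_LD_regularize}(a) applied at time $t = \eta$. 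The only point worth flagging is that the statement is specific to the standard mutual information, since Lemma~\ref{lem:1StepBoundProximal} relies on the regularity bounds derived in the $\Phi(x) = x\log x$ case and does not extend directly to general $\Phi$-mutual information.
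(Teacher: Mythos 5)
Your proposal is correct and is exactly the argument the paper uses: it states Corollary~\ref{cor:1StepAndInitialBoundProximal} as an immediate consequence of Theorem~\ref{thm:ProximalMain} with $\ell = 1$ combined with the one-step bound $\MI(\rho^X_{0,1}) \leq \frac{1}{2\eta}\Var(X_0)$ from Lemma~\ref{lem:1StepBoundProximal}. Your remark that the result is restricted to $\Phi(x) = x\log x$ because the regularity bound underlying Lemma~\ref{lem:1StepBoundProximal} is specific to the standard mutual information is also consistent with the paper's presentation.
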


\subsection{Regularity of Langevin Dynamics}\label{app:RegularityLD_AlternateTheorem}

So far in this appendix we have studied bounds on the mutual information via the regularity properties of the Langevin dynamics (Appendix~\ref{app:RegularityLD}), the Unadjusted Langevin Algorithm (Appendix~\ref{app:RegularityULA}), and the Proximal Sampler (Appendix~\ref{app:RegularityPS}).
We now focus on the continuous-time Langevin dynamics.

Recall from the proof of Theorem~\ref{thm:MI_LD_regularize} (in Appendix~\ref{app:RegularityLD}) how the regularity properties of the Langevin dynamics (as studied in~\citep[Corollary~1]{altschuler2023shifted} for strongly log-concave targets and simultaneous Dirac initializations) result in guarantees for the contraction of mutual information. 
In this appendix we provide an alternate regularity theorem for the Langevin dynamics (Theorem~\ref{thm:LangevinRegSLC}) for strongly log-concave targets and smooth initializations. Although we do not utilize Theorem~\ref{thm:LangevinRegSLC} to study mutual information contraction, we include it here as it might be of independent interest.

Before describing Theorem~\ref{thm:LangevinRegSLC} we state results from~\citep{OV01} which we build upon and which are for the log-concave case.

\begin{theorem}[{\cite[Theorem \& Corollary~2]{OV01}}]\label{thm:LangevinRegLC}
    Let $\nu$ be a log-concave distribution and let $X_t \sim \rho_t$ evolve along the Langevin dynamics~\eqref{eq:LangevinDynamics} from any $X_0 \sim \rho_0$ such that $\rho_0$ is smooth and $\cW_2(\rho_0, \nu) < \infty$.
    Then, for any $t > 0$, we have
    \begin{align*}
        t^2 \, \FI (\rho_t \dvert \nu) + 2t\, \KL(\rho_t \dvert \nu) + \cW_2^2(\rho_t,\nu) \,\le\, \cW_2^2(\rho_0,\nu)\,.
    \end{align*}
    In particular, for any $t > 0$\,,
    \begin{align}\label{Eq:LangevinReg1}
        \KL (\rho_t \dvert \nu) \le \frac{\cW_2^2(\rho_0, \nu)}{4t}\,.
    \end{align}
\end{theorem}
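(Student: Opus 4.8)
The plan is to exploit the interpretation of the Langevin dynamics~\eqref{eq:LangevinDynamics} as the Wasserstein gradient flow of the relative entropy functional $F(\rho) \coloneqq \KL(\rho \dvert \nu)$, and to prove the three-term inequality as an energy-dissipation (Lyapunov) estimate for gradient flows of geodesically convex functionals. Under the identification furnished by Otto calculus, the relative Fisher information $\FI(\rho_t \dvert \nu)$ plays the role of the squared Wasserstein-gradient norm $\|\grad F(\rho_t)\|^2$, the value $\KL(\rho_t \dvert \nu)$ is the functional value above its minimum (attained at $\nu$, where $F(\nu)=0$), and $\cW_2^2(\rho_t,\nu)$ is the squared distance to the minimizer. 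Log-concavity of $\nu$, i.e.\ convexity of $f$, is exactly the condition that $F$ is geodesically convex ($0$-convex) along $\cW_2$-geodesics.

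First I would assemble the three evolution facts that the proof rests on. (i) The de Bruijn-type identity $\frac{\D}{\D t}\KL(\rho_t \dvert \nu) = -\FI(\rho_t \dvert \nu)$, which is the $\Phi(x)=x\log x$ instance of Lemma~\ref{lem:SimultaneousSDE} (taking $\mu_t=\rho_t$, $\nu_t=\nu$, $c=1$, $b_t=-\nabla f$). (ii) Monotonicity of the Fisher information, $\frac{\D}{\D t}\FI(\rho_t \dvert \nu) \le 0$; heuristically this is $-2\langle \Hess F\,\grad F, \grad F\rangle \le 0$, which holds precisely because geodesic convexity makes the Wasserstein Hessian of $F$ positive semidefinite. (iii) The evolution variational inequality (EVI) for a $0$-convex functional, which, evaluated at the minimizer $\nu$, reads $\tfrac12 \frac{\D}{\D t}\cW_2^2(\rho_t,\nu) \le -\KL(\rho_t \dvert \nu)$.

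With these in hand, the core step is to define the energy
\[
E(t) \coloneqq t^2\,\FI(\rho_t \dvert \nu) + 2t\,\KL(\rho_t \dvert \nu) + \cW_2^2(\rho_t,\nu),
\]
and to show $E'\le 0$. Differentiating and using (i) to rewrite $2t\frac{\D}{\D t}\KL = -2t\,\FI$, the two $2t\,\FI$ terms cancel; the leftover $t^2 \frac{\D}{\D t}\FI$ is nonpositive by (ii); and the remaining $2\,\KL(\rho_t \dvert \nu) + \frac{\D}{\D t}\cW_2^2(\rho_t,\nu)$ is nonpositive by (iii). Hence $E(t)\le E(0)=\cW_2^2(\rho_0,\nu)$, which is the claimed three-term inequality. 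The ``in particular'' bound then follows by discarding the nonnegative terms $t^2\FI(\rho_t \dvert \nu)$ and $\cW_2^2(\rho_t,\nu)$ on the left-hand side.

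The main obstacle is rigor: Otto calculus, the EVI, and especially the Fisher-information monotonicity (ii) are formal in infinite-dimensional Wasserstein space, and the differentiability of $t\mapsto \cW_2^2(\rho_t,\nu)$ needs justification. This is exactly where the hypotheses that $\rho_0$ is smooth with $\cW_2(\rho_0,\nu)<\infty$ enter, guaranteeing a well-posed, sufficiently regular flow. I would make the argument rigorous either by invoking the general theory of gradient flows of geodesically convex functionals (Ambrosio--Gigli--Savar\'e), under which the energy identity (i), the EVI (iii), and the monotonicity underlying (ii) are established, or by first proving the estimate for smooth strongly log-concave approximations of $\nu$ (where the $\lambda$-EVI and all derivatives are classical) and then passing to the weakly log-concave limit $\lambda\to 0$, since the right-hand side $\cW_2^2(\rho_0,\nu)$ is stable under this limit.
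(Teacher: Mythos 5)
Your proposal is correct and follows essentially the same route as the source: the paper imports this statement from \citet{OV01} without reproving it, but its own proof of the strongly log-concave extension (Theorem~\ref{thm:LangevinRegSLC}) uses exactly your Lyapunov functional $A_t\,\FI + B_t\,\KL + \cW_2^2$ with the same three ingredients --- de Bruijn's identity, dissipation of Fisher information under $\nabla^2 f \succeq 0$ (cf.~\eqref{Eq:dtCalc2} with $\alpha=0$), and the EVI-type bound $\frac{\D}{\D t}|^+\cW_2^2 \le -2\KL$ (cf.~\eqref{Eq:dtCalc3}) --- and your choice $A_t = t^2$, $B_t = 2t$ is precisely the $\alpha \to 0$ limit of the paper's $A_t, B_t$. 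Your regularity caveats match the hypotheses ($\rho_0$ smooth, $\cW_2(\rho_0,\nu)<\infty$) under which \citet{OV01} justify the formal computation.
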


Although the smoothness assumption on $\rho_0$ may be restrictive, this bound is useful in many settings where $\KL (\rho_0 \dvert \nu) = \infty$ but $\cW_2 (\rho_0, \nu)$ is finite, such as the case where $\rho_0$ is Cauchy, and $\nu$ is a Gaussian.
We extend the analysis of~\citep{OV01} to SLC target in Theorem \ref{thm:LangevinRegSLC}. 
Note that this provides an alternate proof to~\citep[Lemma 4.2]{bobkov2001hypercontractivity}.

\begin{theorem}\label{thm:LangevinRegSLC}
    Let $\nu$ be $\alpha$-SLC for some $\alpha > 0$ and let $X_t \sim \rho_t$ evolve along the Langevin dynamics~\eqref{eq:LangevinDynamics} from any $X_0 \sim \rho_0$ such that $\rho_0$ is smooth and $\cW_2(\rho_0, \nu) < \infty$.
    Then, for any $t > 0$, we have
    \begin{align} \label{Eq:LangevinReg2Lyapunov}
    \frac{(e^{\alpha t}-1)^2}{\alpha^2} \FI(\rho_t \dvert \nu) + \frac{2(e^{\alpha t}-1)}{\alpha} \KL (\rho_t \dvert \nu) + e^{\alpha t} \cW_2^2(\rho_t,\nu) \,\le\,  \cW_2^2(\rho_0,\nu) \,.
    \end{align}
    In particular, for any $t > 0$\,,
    \begin{align}\label{Eq:LangevinReg2}
        \KL (\rho_t \dvert \nu) \le \frac{\alpha}{2(e^{2\alpha t}-1)} \cW_2^2(\rho_0, \nu) \,.
    \end{align}
\end{theorem}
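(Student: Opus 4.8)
The plan is to prove the Lyapunov-type inequality \eqref{Eq:LangevinReg2Lyapunov} by the Otto--Villani method used for the log-concave case (Theorem~\ref{thm:LangevinRegLC}), now taking advantage of the positive curvature $\alpha>0$, and then to extract the sharp divergence bound \eqref{Eq:LangevinReg2} from it. First I would assemble the three differential relations that govern the evolution of $\KL(\rho_t\dvert\nu)$, $\FI(\rho_t\dvert\nu)$, and $\cW_2^2(\rho_t,\nu)$ along the dynamics \eqref{eq:LangevinDynamics} for an $\alpha$-SLC target: de Bruijn's identity $\frac{\D}{\D t}\KL(\rho_t\dvert\nu)=-\FI(\rho_t\dvert\nu)$ (the $\Phi(x)=x\log x$ case of Lemma~\ref{lem:SimultaneousSDE}); the exponential contraction of Fisher information under the Bakry--\'Emery curvature condition, $\frac{\D}{\D t}\FI(\rho_t\dvert\nu)\le -2\alpha\,\FI(\rho_t\dvert\nu)$; and the Evolution Variational Inequality for the Wasserstein gradient flow of the $\alpha$-geodesically convex functional $\KL(\cdot\dvert\nu)$, namely $\frac12\frac{\D}{\D t}\cW_2^2(\rho_t,\nu)\le -\KL(\rho_t\dvert\nu)-\tfrac{\alpha}{2}\cW_2^2(\rho_t,\nu)$.

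For the Lyapunov inequality itself, I would define $\Phi(t)=a(t)\,\FI(\rho_t\dvert\nu)+b(t)\,\KL(\rho_t\dvert\nu)+c(t)\,\cW_2^2(\rho_t,\nu)$ with $a(t)=\frac{(e^{\alpha t}-1)^2}{\alpha^2}$, $b(t)=\frac{2(e^{\alpha t}-1)}{\alpha}$, $c(t)=e^{\alpha t}$, so that $\Phi(0)=\cW_2^2(\rho_0,\nu)$. Writing $I=\FI(\rho_t\dvert\nu)$, $D=\KL(\rho_t\dvert\nu)$, $W=\cW_2^2(\rho_t,\nu)$ and differentiating, the three relations above (used with the signs $a,b,c\ge 0$) give $\dot\Phi\le(\dot a-2\alpha a-b)\,I+(\dot b-2c)\,D+(\dot c-\alpha c)\,W$. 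A direct check yields $\dot c-\alpha c=0$, $\dot b-2c=0$, and $\dot a-2\alpha a-b=\frac{2(e^{\alpha t}-1)}{\alpha}\big[e^{\alpha t}-(e^{\alpha t}-1)-1\big]=0$, so every coefficient vanishes, $\dot\Phi(t)\le 0$, and integrating from $0$ gives $\Phi(t)\le\Phi(0)=\cW_2^2(\rho_0,\nu)$, which is \eqref{Eq:LangevinReg2Lyapunov}.

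For the sharp corollary \eqref{Eq:LangevinReg2}, I would emphasize that merely discarding the $\FI$ and $\cW_2^2$ terms (and invoking the log-Sobolev inequality $\FI\ge 2\alpha\,\KL$) only yields the weaker constant $\frac{\alpha}{2(e^{2\alpha t}-e^{\alpha t})}$; the tight factor requires the HWI inequality $\KL(\rho_t\dvert\nu)\le \cW_2(\rho_t,\nu)\sqrt{\FI(\rho_t\dvert\nu)}-\tfrac{\alpha}{2}\cW_2^2(\rho_t,\nu)$, which rearranges to $I\ge\frac{(D+\frac{\alpha}{2}W)^2}{W}$. Substituting this lower bound for $I$ into \eqref{Eq:LangevinReg2Lyapunov} and expanding $\frac{(D+\frac{\alpha}{2}W)^2}{W}=\frac{D^2}{W}+\alpha D+\frac{\alpha^2}{4}W$, the left-hand side becomes $\frac{aD^2}{W}+\big(\frac{a\alpha^2}{4}+c\big)W+(a\alpha+b)D$. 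Applying AM-GM to the first two terms together with the identities $a\big(\frac{a\alpha^2}{4}+c\big)=\frac{(e^{2\alpha t}-1)^2}{4\alpha^2}$ and $a\alpha+b=\frac{e^{2\alpha t}-1}{\alpha}$ collapses the whole expression to $\frac{2(e^{2\alpha t}-1)}{\alpha}\,\KL(\rho_t\dvert\nu)\le\cW_2^2(\rho_0,\nu)$, which is exactly \eqref{Eq:LangevinReg2}; note that the $\alpha\to 0$ limit recovers the constant $\frac{1}{4t}$ from Theorem~\ref{thm:LangevinRegLC}.

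The hard part will be the two analytic inputs beyond de Bruijn's identity: the exponential contraction of the Fisher information and the EVI for $\cW_2^2$, both of which rest on the Bakry--\'Emery $\Gamma_2$-calculus and Otto-calculus under the $\alpha$-SLC (equivalently $\mathrm{CD}(\alpha,\infty)$) assumption, and both of which require enough regularity of $\rho_t$ — guaranteed by the smoothness of $\rho_0$ and the instantaneous smoothing of the dynamics — to justify differentiating under the integral sign and to validate the HWI inequality at each $t>0$. The algebraic verifications in the two steps above are routine once these ingredients are in place.
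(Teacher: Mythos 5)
Your proposal is correct and follows essentially the same route as the paper: the same three differential inequalities (de Bruijn, exponential decay of Fisher information under $\alpha$-SLC, and the EVI for $\cW_2^2$), the same Lyapunov functional (yours is the paper's $\psi(t)$ multiplied by $e^{\alpha t}$, so you show $\dot\Phi\le 0$ where the paper shows $\dot\psi\le-\alpha\psi$), and the same HWI-plus-AM-GM step to extract the sharp constant in \eqref{Eq:LangevinReg2}. The only differences are cosmetic reorderings of the algebra.
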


\begin{proof}
    Similar to the proof of Theorem \ref{thm:LangevinRegLC} given in~\citep{OV01}, we seek to find a Lyapunov functional of the form
    \begin{equation}\label{Eq:CalcLyapunov}
        \psi(t) = A_t \, \FI(\rho_t \dvert \nu) + B_t \, \KL (\rho_t \dvert \nu) + \cW_2^2(\rho_t, \nu).
    \end{equation}
    for some $A_t, B_t \ge 0$ that will be determined later to ensure that $\psi(t)$ is decreasing exponentially fast.
    From Lemma~\ref{lem:SimultaneousSDE} for the Langevin dynamics (i.e. with $\mu_t = \rho_t$, $\nu_t = \nu$, $b_t(x) = -\nabla f(x)$ and $c=1$), we get that
    \begin{align}\label{Eq:dtCalc1}
        \frac{\D}{\D t} \KL(\rho_t \dvert \nu) &= -\FI(\rho_t \dvert \nu)\,. 
    \end{align}
    Recalling \cite[Formula 15.7]{villani2009optimal} \cite[eq.(10)-(12)]{WJ18b}, we can characterize the rate of change of relative Fisher information,
    \begin{align}
        \frac{\D}{\D t} \FI(\rho_t \dvert \nu) 
        &= -2\mathcal{K}_\nu(\rho_t) - 2 \E_{\rho_t}\left[ \left\langle \nabla \log \frac{\rho_t}{\nu}, (\nabla^2 f)  \nabla \log \frac{\rho_t}{\nu} \right\rangle\right] \nn \\
        &\le -2\alpha \E_{\rho_t}\left[ \left\| \nabla \log \frac{\rho_t}{\nu} \right\|^2 \right]  = -2\alpha \FI(\rho_t \dvert \nu). \label{Eq:dtCalc2}
    \end{align}
    where $\cK_\nu(\rho) := \E_\rho\left[\left\|\nabla^2 \log \frac{\rho}{\nu}\right\|^2_{\HS}\right]$ is the second-order relative Fisher information
    and the inequality follows from the facts that $\cK_\nu(\rho)\ge 0$ and $\nu$ is $\alpha$-SLC, i.e., $\nabla^2 f \succeq \alpha I$.
    Also, recall the lemma from \citep{OV01}, which shows that
    \begin{align}
        \frac{\D}{\D t} \Bigg|^+ \cW_2^2(\rho_t, \nu) 
        &\le -2\E_{\rho_t}\left[\left\langle x-\nabla \varphi_t(x), \, \nabla \log \frac{\rho_t(x)}{\nu(x)} \right\rangle\right] \notag \\
        &\le -2\KL(\rho_t \dvert \nu) - \alpha \cW_2^2(\rho_t,\nu) \label{Eq:dtCalc3} 
    \end{align}
    where $(\D/\D t)^+$ is the upper derivative, and $\nabla \varphi_t$ is the (unique) gradient of the convex function that pushes forward $\rho_t$ to $\nu$, i.e., $\nabla \varphi_t \# \rho_t = \nu$.
    The second inequality above follows since $\nu$ is $\alpha$-SLC, which implies that relative entropy $\rho \mapsto \KL(\rho \dvert \nu)$ is $\alpha$-strongly convex, which further means that
    \begin{align}\label{Eq:dtCalc4}
        \KL (\nu \dvert \nu) - \KL(\rho_t \dvert \nu) - \E_{\rho_t}\left[\left\langle \nabla \varphi_t(x) - x, \, \nabla \log \frac{\rho_t(x)}{\nu(x)} \right\rangle\right] \ge \frac{\alpha}{2} \cW_2^2(\rho_t,\nu) \,.
    \end{align}
    \noindent
    Combining bounds \eqref{Eq:dtCalc1}, \eqref{Eq:dtCalc2}, and \eqref{Eq:dtCalc3}, along with the Lyapunov functional \eqref{Eq:CalcLyapunov}, we have
    \begin{align*}
        \frac{\D}{\D t} \Bigg|^+ \psi(t)
        &= (\dot A_t - B_t) \FI (\rho_t \dvert \nu) + A_t \frac{\D}{\D t} \FI (\rho_t \dvert \nu) + \dot B_t \KL(\rho_t \dvert \nu) + \frac{\D}{\D t} \Bigg|^+ \cW_2^2(\rho_t,\nu) \\
        &\le -\big(2\alpha A_t + B_t - \dot A_t\big) \, \FI (\rho_t \dvert \nu) - (2-\dot B_t) \, \KL(\rho_t \dvert \nu) - \alpha \cW_2^2(\rho_t, \nu)\,.
    \end{align*}
    We want to choose $A_t$ and $B_t$ so that the Lyapunov functional decays exponentially fast with rate $\alpha$ along the Langevin dynamics.
    To this end, we set $2 - \dot B_t = \alpha B_t$ with $B_0 = 0$, which implies that
    \begin{align*}
        B_t = \frac{2(1-e^{-\alpha t})}{\alpha} \,.
    \end{align*}
    We also set $2\alpha A_t + B_t - \dot A_t = \alpha A_t$ with $A_0 = 0$, which can be solved to yield
    \begin{align*}
        A_t = \frac{1}{\alpha^2} (e^{\alpha t} + e^{-\alpha t} - 2) = \frac{e^{\alpha t}(1-e^{-\alpha t})^2}{\alpha^2} \,.
    \end{align*}
    With these choices, we have $\frac{\D}{\D t}|^+ \psi(t) \le -\alpha \psi(t)$, so indeed,
    \begin{equation}\label{ineq:psi1} 
        \psi(t) \le e^{-\alpha t} \psi(0) = e^{-\alpha t} \cW_2^2(\rho_0,\nu) \,,
    \end{equation}
    which, after proper normalization, is the claim in \eqref{Eq:LangevinReg2Lyapunov}.

    Next, we prove \eqref{Eq:LangevinReg2}.
    Inequality \eqref{Eq:dtCalc4} together with the Cauchy-Schwarz inequality actually implies the HWI inequality \cite[Section 9.4]{villani2021topics},
    \begin{align*}
        \sqrt{\FI (\rho_t \dvert \nu)} \, \cW_2(\rho_t,\nu) \,\ge \KL(\rho_t \dvert \nu) + \frac{\alpha}{2} \cW_2^2(\rho_t,\nu)\,.
    \end{align*}
    Using the fact that $a^2+b^2\ge 2ab$ and the above relation, for any $C_t \ge 0$, we have
    \begin{align*}
        A_t \, \FI (\rho_t \dvert \nu) + C_t \, \cW_2(\rho_t,\nu)^2 &\ge 2\sqrt{A_t C_t} \, \sqrt{\FI(\rho_t \dvert \nu)} \, \cW_2(\rho_t,\nu) \\
        &\ge 2\sqrt{A_t C_t} \, \left(\KL(\rho_t \dvert \nu) + \frac{\alpha}{2} \cW_2^2(\rho_t,\nu)\right)\,.
    \end{align*}
    We can then decompose and bound the Lyapunov functional $\psi(t)$ as follows
    \begin{align}
        \psi(t) &= A_t \, \FI(\rho_t \dvert \nu) + C_t \, \cW_2^2(\rho_t,\nu) + B_t \, \KL(\rho_t \dvert \nu) + (1-C_t) \, \cW_2^2(\rho_t,\nu) \nn \\
        &\ge \left(B_t + 2\sqrt{A_t C_t} \right) \, \KL(\rho_t \dvert \nu) + \left(1-C_t + \alpha \sqrt{A_tC_t}\right) \, \cW_2^2(\rho_t,\nu) \,. \label{ineq:psi}
    \end{align}
    We now choose $C_t$ such that $1-C_t + \alpha \sqrt{A_tC_t} = 0$.
    Choosing the positive solution, we have
    \begin{equation}\label{eq:Ct}
        \sqrt{C_t} = \frac{\alpha \sqrt{A_t} + \sqrt{\alpha^2 A_t + 4}}{2} \,.
    \end{equation}
    This choice of $C_t$, the previous choices $B_t + \alpha A_t = \dot A_t = \frac{1}{\alpha} (e^{\alpha t} - e^{-\alpha t})$ and the subsequent calculation
    \[
        A_t(\alpha^2 A_t + 4) = \frac{e^{\alpha t} (1-e^{-\alpha t})^2}{\alpha^2} \left( e^{\alpha t} + e^{-\alpha t} + 2 \right) 
        = \frac{(e^{\alpha t} - e^{-\alpha t})^2}{\alpha^2} \,,
    \]
    yield that
    \[
        B_t + 2\sqrt{A_tC_t} \stackrel{\eqref{eq:Ct}}= B_t + \alpha A_t + \sqrt{A_t(\alpha^2 A_t + 4)}
        = \frac{2}{\alpha} (e^{\alpha t} - e^{-\alpha t}) \,.
    \]
    Thus, with this choice of $C_t$, we have
    \[
        \psi_t
        \stackrel{\eqref{ineq:psi}}\ge (B_t + 2\sqrt{A_t C_t}) \, \KL(\rho_t \dvert \nu)
        = \frac{2}{\alpha} (e^{\alpha t} - e^{-\alpha t}) \, \KL(\rho_t \dvert \nu) \,.
    \]
    Therefore, we conclude that
    \[
        \KL(\rho_t \dvert \nu) \stackrel{\eqref{ineq:psi1}}\le \frac{\alpha}{2(e^{\alpha t} - e^{-\alpha t})} \, e^{-\alpha t} \, \cW_2^2(\rho_0,\nu) 
        = \frac{\alpha}{2(e^{2\alpha t} - 1)} \, \cW_2^2(\rho_0,\nu) \,,
    \]
    as claimed in \eqref{Eq:LangevinReg2}.
\end{proof}

We note that~\cite[Corollary~1]{altschuler2023shifted} follows a different proof to obtain a stronger version of Theorem~\ref{thm:LangevinRegSLC}, which applies for simultaneous Langevin dynamics and under no assumption on $\rho_0$\,. Also note that the dual form of the log-Harnack inequality~\cite[Equation (4.5)]{bobkov2001hypercontractivity} yields a version of Theorem~\ref{thm:LangevinRegSLC} which is applicable for simultaneous Langevin dynamics and from Dirac initializations. The duality between Harnack inequalities and reverse transport inequalities (such as~\eqref{Eq:LangevinReg2}) is explained in~\cite[Section~VI.B]{altschuler2023shifted}.

\section{Proofs for Langevin Dynamics}

Here we present the proofs of and related to Theorem~\ref{thm:LangevinDynamicsMain}. In Appendix~\ref{app:PSIEvolutionAlongLangevinDynamics} we study the evolution of the $\Phi$-Sobolev constant along the Langevin dynamics. This approach is crucial to both of our proof strategies for Theorem~\ref{thm:LangevinDynamicsMain}. In Appendix~\ref{app:LangevinMainDirect} we present the direct time derivative-based approach for Theorem~\ref{thm:LangevinDynamicsMain} and in Appendix~\ref{app:LangevinMainSDPI} we discuss the SDPI approach.

\subsection{Evolution of $\Phi$-Sobolev Constant along Langevin Dynamics}\label{app:PSIEvolutionAlongLangevinDynamics}

\begin{lemma}\label{lem:PSIEvolutionAlongLangevinDynamics}
    Suppose $X_t \sim \rho_t$ evolves according to \eqref{eq:LangevinDynamics} where $\nabla^2 f \succeq \alpha I$ with $\alpha > 0$\,. If $\rho_s$ satisfies $\Phi$-Sobolev inequality with optimal constant $\alpha_{\PSI}(\rho_s)$ where $s \geq 0$\,. Then for $t \geq s$ we have that 
    \begin{equation}\label{eq:PSIEvolutionAlongLangevinDynamics}
    \frac{1}{\alpha_{\PSI}(\rho_t)} \leq \frac{e^{-2\alpha(t-s)}}{\alpha_{\PSI}(\rho_s)} + \frac{1-e^{-2\alpha(t-s)}}{\alpha}\,.
    \end{equation}
\end{lemma}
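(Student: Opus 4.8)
The plan is to show that the reciprocal $c(t) := 1/\alpha_{\PSI}(\rho_t)$ obeys the differential inequality
\begin{equation*}
\frac{\D}{\D t} \frac{1}{\alpha_{\PSI}(\rho_t)} \;\leq\; -\frac{2\alpha}{\alpha_{\PSI}(\rho_t)} + 2,
\end{equation*}
and then to integrate it. Indeed, the right-hand side of~\eqref{eq:PSIEvolutionAlongLangevinDynamics} is precisely the value at time $t$ of the solution to the linear ODE $\dot c = -2\alpha c + 2$ started from $c(s) = 1/\alpha_{\PSI}(\rho_s)$, since that solution is $c(t) = e^{-2\alpha(t-s)} c(s) + \tfrac{1-e^{-2\alpha(t-s)}}{\alpha}$. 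So once the differential inequality is in hand, Gr\"onwall's lemma (comparison with this ODE) closes the argument for all $t \geq s$.

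To establish the differential inequality, I would analyze the infinitesimal action of the Langevin dynamics over a short time $h > 0$, using the Euler/ULA decomposition: up to $o(h)$, evolving $\rho_t$ by time $h$ is a pushforward by the drift map $F_h(x) = x - h\nabla f(x)$ followed by convolution with $\N(0, 2hI)$. Because $\nabla^2 f \succeq \alpha I$, the Jacobian satisfies $I - h\nabla^2 f \preceq (1-\alpha h)I$, so $F_h$ is $(1-\alpha h)$-Lipschitz; by the pushforward rule (Lemma~\ref{lem:PhiSIchangeAlongLipschitzPushforward}) this degrades the reciprocal constant by at most the factor $(1-\alpha h)^2$. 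The Gaussian $\N(0,2hI)$ is $\tfrac{1}{2h}$-strongly log-concave, hence (Lemma~\ref{lem:PhiSISLC}) satisfies a $\Phi$-Sobolev inequality with constant $\tfrac{1}{2h}$; combining with the convolution rule (Lemma~\ref{lem:PhiSIchangeAlongConvolution}) gives
\begin{equation*}
\frac{1}{\alpha_{\PSI}(\rho_{t+h})} \;\leq\; \frac{(1-\alpha h)^2}{\alpha_{\PSI}(\rho_t)} + 2h \;=\; \frac{1}{\alpha_{\PSI}(\rho_t)} + h\left(-\frac{2\alpha}{\alpha_{\PSI}(\rho_t)} + 2\right) + O(h^2).
\end{equation*}
Dividing by $h$ and letting $h \to 0$ yields the claimed differential inequality. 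Equivalently, this is exactly the $\eta \to 0$ limit of the discrete ULA evolution bound $\tfrac{1}{\alpha_{\PSI}(\rho\bP)} \leq \tfrac{\gamma^2}{\alpha_{\PSI}(\rho)} + 2\eta$ implied by Lemma~\ref{lem:ULAContractionCoefficientAndPSIEvolution}(b), taken with $\gamma = 1 - \alpha\eta$.

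The main obstacle is making this infinitesimal decomposition rigorous: the split ``drift pushforward then Gaussian convolution'' is the Euler discretization rather than the exact flow, and the statement assumes only strong convexity $\nabla^2 f \succeq \alpha I$ without an upper bound on $\nabla^2 f$, under which the Lipschitz claim for $F_h$ requires care. I would handle this either by (i) passing carefully to the $\eta\to 0$ limit of Lemma~\ref{lem:ULAContractionCoefficientAndPSIEvolution}(b) together with the convergence of ULA to the Langevin dynamics, using a smoothness approximation of $f$ if needed, or (ii) justifying the one-sided upper-derivative bound directly from the Fokker--Planck semigroup while controlling the $o(h)$ remainder uniformly on the relevant time interval. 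Once the differential inequality holds in the (upper) derivative sense, the Gr\"onwall comparison step is routine and produces exactly~\eqref{eq:PSIEvolutionAlongLangevinDynamics}.
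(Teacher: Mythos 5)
Your proposal is correct and follows essentially the same route as the paper: the paper likewise decomposes the Langevin step as the ULA update (drift pushforward composed with Gaussian convolution), applies Lemmas~\ref{lem:PhiSIchangeAlongLipschitzPushforward} and~\ref{lem:PhiSIchangeAlongConvolution} to obtain the recursion $c_{i+1} \le (1-\eta\alpha)^2 c_i + 2\eta$, and passes to the limit $\eta \to 0$ --- the only difference being that the paper iterates the discrete recursion in closed form before taking the limit, whereas you take the limit first to get the differential inequality $\dot c \le -2\alpha c + 2$ and then integrate, which yields the identical bound. The rigor concerns you flag (the Euler split versus the exact flow, and the Lipschitz constant of $I - \eta\nabla f$ absent an upper bound on $\nabla^2 f$) are present in the paper's argument as well and are left at the same informal level there.
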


\begin{proof}
    Consider the forward discretization of \eqref{eq:LangevinDynamics} (i.e. the ULA~\eqref{eq:ULA}) with step-size $\eta > 0$\,:
    \[
    X_{k+1} = X_k - \eta \nabla f (X_k) + \sqrt{2\eta}Z_k\,.
    \]
    We will consider this discrete time update and then take the appropriate limit so that we recover the desired results for the continuous time dynamics. If $X_k \sim \rho_k$\,, then we have that along the ULA,
    \[
    \rho_{k+1} = (I - \eta \nabla f)_{\#}\rho_k * \N(0, 2\eta I)\,.
    \]
    Under $\nabla^2 f \succeq \alpha I$, we can see that the map $F(x) = x - \eta \nabla f(x)$ is $(1-\eta \alpha)$-Lipschitz. Using Lemmas~\ref{lem:PhiSIchangeAlongLipschitzPushforward} and~\ref{lem:PhiSIchangeAlongConvolution}, along with the shorthand $\alpha_i$ for $\alpha_{\PSI}(\rho_i)$ and $c_i$ for $1/\alpha_i$\,, we have that 
    \[
    c_{i+1} \stackrel{\eqref{ineq:convolution}}\le \frac{1}{\alpha_{\PSI}((I - \eta \nabla f)_{\#}\rho_i)} + 2\eta \stackrel{\eqref{ineq:pushforward}}\leq (1-\eta \alpha)^2 c_i + 2 \eta\,.
    \]
    Recursing this from $i=j$ to $i=k$\,, we get that 
    \begin{align*}
    c_{k+1} &\leq (1-\eta\alpha)^{2(k+1-j)}c_j + 2\eta [1 + (1-\eta \alpha)^2 + \dots + (1-\eta \alpha)^{2(k-j)}]\\
    &= (1-\eta\alpha)^{2(k+1-j)}c_j + 2\eta \left[ \frac{1-(1-\eta \alpha)^{2(k+1-j)}}{\eta \alpha(2-\eta \alpha)}  \right]\,.
    \end{align*}
    Taking $\eta k \to t$, $\eta j \to s$, and $\eta \to 0$\,, we get that 
    \[
    c_t \leq e^{-2\alpha(t-s)}c_s + \frac{1-e^{-2\alpha(t-s)}}{\alpha}\,,
    \]
    which yields the desired claim.    
\end{proof}

\subsection{Proofs for Direct Time Derivative Analysis}\label{app:LangevinMainDirect}

\subsubsection{Proof of Lemma~\ref{lem:PhiMutualDeBruijn}}\label{app:PfOfPhiMutualDeBruijn}

\begin{proof}
    Consider Lemma~\ref{lem:SimultaneousSDE} with $\mu_t = \tilde \rho_t$, $\nu_t = \rho_t$, $b_t = -\nabla f$, and $c=1$. Let $\tilde \rho_0 = \delta_{x_0}$ for some $x_0 \in \R^d$, so that $\tilde \rho_t = \rho_{t \mid 0}(\cdot \mid x_0)$\,. It follows from Lemma~\ref{lem:SimultaneousSDE} that
    \[
    \frac{\D}{\D t} \sfD_{\Phi}(\tilde \rho_t \dvert \rho_t) = - \FI_{\Phi}(\tilde \rho_t \dvert \rho_t)\,.
    \]
    It thus follows from Definition~\ref{def:PhiMutualInformation}~and~\eqref{eq:MutualPhiFisherInfo} that
    \begin{align*}
        \frac{\D}{\D t} \MI_\Phi(\rho_{0,t}) &= \frac{\D}{\D t} \E_{\rho_0} \left[ \sfD_{\Phi} (\rho_{t \mid 0} \dvert \rho_t) \right] = \E_{\rho_0} \left[ \frac{\D}{\D t} \sfD_{\Phi} (\rho_{t \mid 0} \dvert \rho_t) \right]\\
        &= - \E_{\rho_0} [\FI_{\Phi}( \rho_{t \mid 0} \dvert \rho_t)] \stackrel{\eqref{eq:MutualPhiFisherInfo}}= -\FI_\Phi^\M(\rho_{0,t}).
    \end{align*}
    We have thus completed the proof.
\end{proof}

\subsubsection{Proof of Lemma~\ref{lem:mutual_PhiSI}}\label{app:PfOfmutual_PhiSI}

\begin{proof}
    As $\rho^Y$ satisfies $\Phi$-Sobolev inequality, it follows from Definition \ref{def:PhiSobolevInequality} that the following holds for each $x \sim \rho^X$,
    \[
    2\alpha_{\PSI}(\rho^Y) \sfD_{\Phi}(\rho^{Y \mid X = x} \dvert \rho^Y) \stackrel{\eqref{eq:PhiSI_DistributionBased}}\leq \FI_{\Phi}(\rho^{Y \mid X = x} \dvert \rho^Y)\,.
    \]
    Taking expectation over $\rho^X$ and using Definition~\ref{def:PhiMutualInformation} and~\eqref{eq:MutualPhiFisherInfo} completes the proof.
\end{proof}

\subsubsection{Proof of Theorem~\ref{thm:LangevinDynamicsMain} (direct time derivative-based proof)}\label{app:DirectPfOfLangevinDynamicsMain}

\begin{proof}
    Fix an $s > 0$ such that $\rho_s$ satisfies a $\Phi$-Sobolev inequality with optimal constant $\alpha_{\PSI}(\rho_s)$. 
    Lemma~\ref{lem:PSIEvolutionAlongLangevinDynamics} tells us that for any $t \geq s$, we have
    \begin{equation}\label{eq:DirectPf_LD_1}
        \alpha_{\PSI}(\rho_t) \geq \frac{\alpha_{\PSI}(\rho_s) \alpha}{e^{-2\alpha (t-s)} (\alpha - \alpha_{\PSI}(\rho_s)) + \alpha_{\PSI}(\rho_s)}
        = \frac{1}{e^{-2\alpha (t-s)} (\alpha_{\PSI}(\rho_s)^{-1} - \alpha^{-1}) + \alpha^{-1}}.
    \end{equation}
    Now from Lemmas \ref{lem:PhiMutualDeBruijn} and \ref{lem:mutual_PhiSI}, we have that,
    \[
        \frac{\D}{\D t} \MI_{\Phi}(\rho_{0,t}) 
        = -\FI_{\Phi}^{\M}(\rho_{0,t}) 
        \le -2\alpha_{\PSI}(\rho_t) \, \MI_{\Phi}(\rho_{0,t})\,. 
    \]
    Integrating the above from $s$ to $t$, and using~\eqref{eq:DirectPf_LD_1} gives us
    \[
        \MI_{\Phi}(\rho_{0,t}) \le \exp\left(-2A_t\right) \MI_{\Phi}(\rho_{0,s})\,,
    \]
    where
    \[
    A_t \coloneqq  \int_s^t \frac{e^{2\alpha(r-s)}}{\alpha_{\PSI}(\rho_s)^{-1} - \alpha^{-1} + \alpha^{-1}e^{2\alpha(r-s)}} \D r\,.
    \]
    Upon simplifying, we get that
    \begin{align*}
        A_t = \frac{1}{2} \log \left(\frac{e^{-2\alpha (t-s)}(\alpha_{\PSI}(\rho_s)^{-1} - \alpha^{-1}) + \alpha^{-1}}{\alpha_{\PSI}(\rho_s)^{-1}}\right) + \alpha (t-s)\,.
    \end{align*}
    Therefore,
    \[
        \MI_{\Phi}(\rho_{0,t}) \le e^{-2A_t} \,\MI_{\Phi}(\rho_{0,s}) 
        = \frac{\alpha e^{-2\alpha (t-s)}}{\alpha_{\PSI}(\rho_s) (1-e^{-2\alpha (t-s)}) + \alpha e^{-2\alpha (t-s)}} \, \MI_{\Phi}(\rho_{0,s})\,.
    \]
    Observe that the denominator is a convex combination of $\alpha$ and $\alpha_{\PSI}(\rho_s)$\,, and so we get that
    \[
    \frac{\alpha }{\alpha_{\PSI}(\rho_s) (1-e^{-2\alpha (t-s)}) + \alpha e^{-2\alpha (t-s)}} \leq \max \left\{ 1, \frac{\alpha}{\alpha_{\PSI}(\rho_s)} \right\}\,,
    \]
    which proves the desired result.
\end{proof}

\subsection{Proofs for SDPI Analysis for Langevin Dynamics}\label{app:LangevinMainSDPI}

The key idea to apply the SDPI-based approach is to bound the contraction coefficient for the dynamics.
We do so in Lemma~\ref{lem:ContractionCoefficientForLangevinDynamics}.

We let $\bP_t$ denote the map which takes as input a distribution $\mu$ and outputs $\rho_t$ where $\rho_t$ evolves following~\eqref{eq:LangevinDynamics} from $\rho_0 = \mu$.

\subsubsection{Proof of Contraction Coefficient}

\begin{lemma}\label{lem:ContractionCoefficientForLangevinDynamics}
    Let $\bP_t$ denote the Markov kernel corresponding to the Langevin dynamics~\eqref{eq:LangevinDynamics} where $\nabla^2 f \succeq \alpha I$ with $\alpha > 0$\,. Then, if $\rho$ satisfies a $\Phi$-Sobolev inequality with optimal constant $\alpha_{\PSI}(\rho)$,
    \begin{equation*}
        \varepsilon_{\sfD_{\Phi}}(\bP_t, \rho) \leq \frac{\alpha e^{-2\alpha t}}{\alpha_{\PSI}(\rho) (1-e^{-2\alpha t}) + \alpha e^{-2\alpha t}}\,.
    \end{equation*}
\end{lemma}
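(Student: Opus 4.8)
The plan is to track the $\Phi$-divergence between the two evolutions $\mu_t \coloneqq \mu \bP_t$ and $\rho_t \coloneqq \rho \bP_t$ as $t$ increases, and to convert the $\Phi$-Sobolev inequality satisfied by $\rho_t$ into a Gr\"onwall-type decay estimate. Fix any $\mu$ with $0 < \sfD_\Phi(\mu \dvert \rho) < \infty$. Since $\mu_t$ and $\rho_t$ solve the same SDE~\eqref{eq:LangevinDynamics} (drift $b_t = -\nabla f$, diffusion constant $c=1$), Lemma~\ref{lem:SimultaneousSDE} gives the exact identity
\[
\frac{\D}{\D t}\, \sfD_\Phi(\mu_t \dvert \rho_t) = -\FI_\Phi(\mu_t \dvert \rho_t)\,.
\]

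Next I would lower bound the right-hand side. Because $\rho_0 = \rho$ satisfies a $\Phi$-Sobolev inequality with constant $\alpha_{\PSI}(\rho)$, Lemma~\ref{lem:PSIEvolutionAlongLangevinDynamics} (applied with $s = 0$) guarantees that $\rho_t$ also satisfies a $\Phi$-Sobolev inequality, with the quantitative bound
\[
\alpha_{\PSI}(\rho_t) \;\geq\; \frac{\alpha\,\alpha_{\PSI}(\rho)}{\alpha\, e^{-2\alpha t} + \alpha_{\PSI}(\rho)\,(1 - e^{-2\alpha t})}\,.
\]
Applying Definition~\ref{def:PhiSobolevInequality} to the pair $(\mu_t, \rho_t)$ then yields $\FI_\Phi(\mu_t \dvert \rho_t) \geq 2\,\alpha_{\PSI}(\rho_t)\,\sfD_\Phi(\mu_t \dvert \rho_t)$, so that, writing $D(t) \coloneqq \sfD_\Phi(\mu_t \dvert \rho_t) \geq 0$, the identity above becomes the differential inequality $D'(t) \leq -2\,\alpha_{\PSI}(\rho_t)\, D(t)$.

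Gr\"onwall's inequality then gives $D(t) \leq D(0)\,\exp\!\big(-2\int_0^t \alpha_{\PSI}(\rho_s)\,\D s\big)$, so it remains only to integrate the lower bound on $\alpha_{\PSI}(\rho_s)$. Clearing the exponential by writing the integrand (with $a \coloneqq \alpha_{\PSI}(\rho)$) as $\alpha a\, e^{2\alpha s}/(a\, e^{2\alpha s} + \alpha - a)$, the numerator is exactly $\tfrac12$ of the derivative of the denominator, so the integral evaluates in closed form to $\tfrac12 \log\!\big((a\, e^{2\alpha t} + \alpha - a)/\alpha\big)$. Exponentiating and dividing by $D(0) = \sfD_\Phi(\mu \dvert \rho)$ produces
\[
\frac{\sfD_\Phi(\mu \bP_t \dvert \rho \bP_t)}{\sfD_\Phi(\mu \dvert \rho)} \;\leq\; \frac{\alpha}{\alpha_{\PSI}(\rho)\, e^{2\alpha t} + \alpha - \alpha_{\PSI}(\rho)}\,,
\]
which equals the claimed bound after multiplying numerator and denominator by $e^{-2\alpha t}$. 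Taking the supremum over $\mu$ in~\eqref{eq:ContractionCoefficientMainText} finishes the proof.

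The substantive ingredient is Lemma~\ref{lem:PSIEvolutionAlongLangevinDynamics}, controlling how the $\Phi$-Sobolev constant degrades along the flow; granting it, the remaining work is the elementary closed-form integration of the time-varying rate and the verification that the two forms of the final expression agree. The only point requiring care beyond this is ensuring $\mu_t \ll \rho_t$, so that the $\Phi$-Sobolev inequality may legitimately be applied along the trajectory, which follows from the smoothing of the Langevin semigroup. I expect no delicate obstacle, and indeed this argument is precisely the continuous-time limit of the discrete ULA analysis in Lemma~\ref{lem:ULAContractionCoefficientAndPSIEvolution}.
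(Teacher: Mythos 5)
Your proposal is correct and follows essentially the same route as the paper's proof: both combine the simultaneous-evolution identity of Lemma~\ref{lem:SimultaneousSDE} with the $\Phi$-Sobolev constant evolution from Lemma~\ref{lem:PSIEvolutionAlongLangevinDynamics}, apply Gr\"onwall's inequality to the resulting differential inequality, and integrate the time-varying rate in closed form to obtain the stated contraction coefficient.
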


\begin{proof}
    Let $\pi$ be an arbitrary distribution such that $\sfD_{\Phi}(\pi \dvert \rho) < \infty$\,. And let $\pi_t = \pi \bP_t$ and $\rho_t = \rho \bP_t$\,. Therefore, $\pi_0 = \pi$ and $\rho_0 = \rho$\,. 
    Recall from Lemma \ref{lem:PSIEvolutionAlongLangevinDynamics} that $\rho_t$ satisfies a $\Phi$-Sobolev inequality with optimal constant $\alpha_{\PSI}(\rho_t)$ as in \eqref{eq:PSIEvolutionAlongLangevinDynamics}.
    From Lemma~\ref{lem:SimultaneousSDE} with $\mu_t = \pi_t$, $\nu_t = \rho_t$, $b_t = -\nabla f$, and $c=1$, along with Definition~\ref{def:PhiSobolevInequality}, we have that
    \[
    \frac{\D}{\D t} \sfD_\Phi (\pi_t \dvert \rho_t) = - \FI_\Phi(\pi_t \dvert \rho_t)    \stackrel{\eqref{eq:PhiSI_DistributionBased}}\leq -2 \, \alpha_{\PSI}(\rho_t) \sfD_{\Phi}(\pi_t \dvert \rho_t)\,.
    \]
    Plugging \eqref{eq:PSIEvolutionAlongLangevinDynamics} into the above inequality, we have
    \begin{align*}
        \frac{\D}{\D t} \sfD_\Phi (\pi_t \dvert \rho_t) &\leq  \frac{-2\alpha_{\PSI}(\rho_s) \alpha}{e^{-2\alpha(t-s)} \alpha + [1-e^{-2\alpha(t-s)}] \alpha_{\PSI}(\rho_s)} \sfD_{\Phi}(\pi_t \dvert \rho_t) \\ 
        &= \frac{-2}{e^{-2\alpha s} (\alpha_{\PSI}(\rho_0)^{-1} - \alpha^{-1}) + \alpha^{-1}}  \sfD_{\Phi}(\pi_t \dvert \rho_t) 
    \end{align*}
    Applying Grönwall's inequality gives
    \[
    \sfD_{\Phi}(\pi_t \dvert \rho_t) \leq \exp \left(  -2  \int_0^t \frac{1}{e^{-2\alpha s} (\alpha_{\PSI}(\rho_0)^{-1} - \alpha^{-1}) + \alpha^{-1}} \D s \right) \sfD_{\Phi}(\pi_0 \dvert \rho_0)\,,
    \]
    which simplifies to,
    \[
    \sfD_\Phi (\pi_t \dvert \rho_t) \leq  \frac{\alpha e^{-2\alpha t}}{\alpha_{\PSI}(\rho_0) (1-e^{-2\alpha t}) + \alpha e^{-2\alpha t}}  \sfD_{\Phi}(\pi_0 \dvert \rho_0).
    \]
    Therefore, the desired bound immediately follows from the above inequality and Definition \ref{def:ContractionCoefficient_PhiDivergence}.
\end{proof}

\subsubsection{Proof of Theorem~\ref{thm:LangevinDynamicsMain} (SDPI-based proof)}\label{app:SDPIPfOfLangevinDynamicsMain}

\begin{proof}
    Let $\bP_t$ denote the Markov kernel for the Langevin dynamics~\eqref{eq:LangevinDynamics}. From Definition~\ref{def:SDPI_PhiMutualInformation} and Lemma~\ref{lem:RelatePhiDivergenceAndPhiMICoefficients}, we have that,
    \[
    \frac{\MI_{\Phi}(\rho_{0,t})}{\MI_{\Phi}(\rho_{0,s})} \stackrel{\eqref{eq:SDPI_PhiMutualInformation}}\leq \varepsilon_{\MI_{\Phi}}(\bP_{t-s}, \rho_s) \leq \varepsilon_{\sfD_{\Phi}}(\bP_{t-s}, \rho_s)\,.
    \]
    Now using Lemma~\ref{lem:ContractionCoefficientForLangevinDynamics} we can upper bound this by
    \[
    \frac{\alpha e^{-2\alpha (t-s)}}{\alpha_{\PSI}(\rho_s) (1-e^{-2\alpha (t-s)}) + \alpha e^{-2\alpha (t-s)}}\,.
    \]
    Observe that the denominator is a convex combination of $\alpha$ and $\alpha_{\PSI}(\rho_s)$\,, and so we get that
    \[
    \frac{\alpha }{\alpha_{\PSI}(\rho_s) (1-e^{-2\alpha (t-s)}) + \alpha e^{-2\alpha (t-s)}} \leq \max \left\{ 1, \frac{\alpha}{\alpha_{\PSI}(\rho_s)} \right\}\,,
    \]
    which proves the desired result.
\end{proof}

\section{Proofs for ULA}

\subsection{Proof of Lemma~\ref{lem:ULAContractionCoefficientAndPSIEvolution}}\label{app:PfOfULAContractionCoefficientAndPSIEvolution}

\begin{proof}
\textbf{Part (a):~}
Fix any $\mu$ such that $\sfD_{\Phi}(\mu \dvert \rho) < \infty$\,.
Denote 
\[
F_{\#}\rho * \N(0, tI) = \rho_t, \quad F_{\#}\mu * \N(0, tI) = \mu_t,
\]
where $F(x) = x - \eta \nabla f (x)$.
It follows from Lemma~\ref{lem:PhiSIchangeAlongLipschitzPushforward} that
\begin{equation}\label{ineq:push}
    \alpha_{\PSI}(\rho_0) \geq \frac{\alpha_{\PSI}(\rho)}{\gamma^2}
\end{equation}
where $\rho_0 = F_{\#}\rho$.
Furthermore, it follows from Lemma~\ref{lem:PhiSIchangeAlongConvolution} that
\begin{equation}\label{ineq:conv}
    \alpha_{\PSI}(\rho_t) \geq \frac{\alpha_{\PSI}(\rho_0)}{1 + \alpha_{\PSI}(\rho_0)t}\,.
\end{equation}
Now using Lemma~\ref{lem:SimultaneousSDE} with $\mu_t = \mu_t$, $\nu_t = \rho_t$, $b_t \equiv 0$, and $c = \frac{1}{2}$, along with Definition~\ref{def:PhiSobolevInequality}, we have the following
\[
\frac{\D}{\D t}\sfD_{\Phi}(\mu_t \dvert \rho_t) = -\frac{1}{2} \FI_{\Phi}(\mu_t \dvert \rho_t) \leq -\alpha_{\PSI}(\rho_t) \sfD_{\Phi}(\mu_t \dvert \rho_t).
\]
Integrating this from $t = 0$ to $t = 2\eta$\,, and using \eqref{ineq:conv} and \eqref{ineq:push}, we get that
\begin{equation}\label{ineq:contraction}
    \frac{\sfD_{\Phi}(\mu_{2\eta} \dvert \rho_{2\eta})}{\sfD_{\Phi}(\mu_0 \dvert \rho_0)} \leq \exp \left( -\int_0^{2\eta} \alpha_{\PSI}(\rho_t) \D t \right) \stackrel{\eqref{ineq:conv}}\le \frac{1}{1+2\eta\,\alpha_{\PSI}(\rho_0)} \stackrel{\eqref{ineq:push}}\leq \frac{\gamma^2}{\gamma^2 + 2\eta\,\alpha_{\PSI}(\rho)}\,.
\end{equation}
Note that $\sfD_{\Phi}(\mu \dvert \rho) = \sfD_{\Phi}(\mu_0 \dvert \rho_0)$ as $\Phi$-divergence is invariant to applying a bijective map to both arguments.
It thus follows that 
\[
\frac{\sfD_{\Phi}(\mu_{2\eta} \dvert \rho_{2\eta})}{\sfD_{\Phi}(\mu_0 \dvert \rho_0)} = \frac{\sfD_{\Phi}(\mu_0 * \N(0, 2\eta I) \dvert \rho_0 * \N(0, 2\eta I))}{\sfD_{\Phi}(\mu \dvert \rho)}
= \frac{\sfD_{\Phi}(\mu \bP \dvert \rho \bP)}{\sfD_{\Phi}(\mu \dvert \rho)}.
\]
Finally, the statement follows from the above observation, Definition \ref{def:ContractionCoefficient_PhiDivergence}, and \eqref{ineq:contraction}.

\medskip
\noindent
\textbf{Part (b):~}
Recall the ULA update $\rho \bP = F_{\#}\rho * \N(0, 2\eta I)$ mentioned in~\eqref{eq:ULA_TwoStepUpdate} and note that $\N(0, 2\eta I)$ is $\frac{1}{2\eta}$-strongly log-concave, and therefore satisfies $\Phi$-Sobolev inequality with the same constant (Lemma~\ref{lem:PhiSISLC}). The result then follows from Lemmas~\ref{lem:PhiSIchangeAlongLipschitzPushforward} and \ref{lem:PhiSIchangeAlongConvolution}.

\end{proof}

\subsection{Proof of Theorem~\ref{thm:ULA_Main}}\label{app:PfOfThmULAMain}

\begin{proof}
    Recall the ULA update $\rho \bP = F_{\#}\rho * \N(0, 2\eta I)$ mentioned in~\eqref{eq:ULA_TwoStepUpdate}. Denote the Lipschitz constant of $F$ by $\gamma >0$.
    We begin by ensuring that $F$ satisfies the conditions required by Lemma~\ref{lem:ULAContractionCoefficientAndPSIEvolution}.
    The $\alpha$-strong log-concavity and $L$-smoothness of $\nu$ implies $(1-\eta L)I \preceq \nabla F \preceq (1-\eta \alpha) I$ where note that $\nabla F (x) = I - \eta \nabla^2 f(x)$\,. 
    Therefore, $F$ is $\gamma = (1-\eta\alpha)$-Lipschitz.
    For Lemma~\ref{lem:ULAContractionCoefficientAndPSIEvolution}(a), it remains to check that $F$ is bijective. The continuity of $F$ ensures that it is surjective. For injectivity, note that,
    $
        F(x) = F (y) \implies x-y = \eta [\nabla f (x) - \nabla f (y)]\,.
    $
    Taking norm on both sides and using that $\nabla f$ is $L$-Lipschitz, along with $\eta \leq 1/L$, implies that $F$ is injective. Therefore $F$ meets all of the requirements set forth by Lemma~\ref{lem:ULAContractionCoefficientAndPSIEvolution}.

    Fix an $\ell \geq 1$ such that $\rho_\ell$ satisfies a $\Phi$-Sobolev inequality. This is guaranteed by assumption.
    Then Lemma~\ref{lem:IndependenceContractionBySDPI} states that
    \begin{equation}\label{eq:ProductOfContractionCoefficients}
        \MI_{\Phi}(\rho_{0,k}) \leq  \prod_{i=\ell}^{k-1} \varepsilon_i \, \MI_{\Phi}(\rho_{0,\ell}), \quad \varepsilon_i = \varepsilon_{\sfD_{\Phi}}(\bP, \rho_i).
    \end{equation}
    As $\rho_\ell$ satisfies a $\Phi$-Sobolev inequality, Lemma~\ref{lem:ULAContractionCoefficientAndPSIEvolution}(b) guarantees that $\rho_j$ satisfies a $\Phi$-Sobolev inequality too for all $j \geq \ell +1$\,.
    Denote $\alpha_{\PSI}(\rho_i)$ by $\alpha_i$, then the claim of Lemma~\ref{lem:ULAContractionCoefficientAndPSIEvolution}(b) can be rewritten as 
    \begin{equation}\label{ineq:DiscreteTimeIdentityGaussianPSIEvolutionRewritten}
        1 + \frac{ 2 \alpha_i \eta}{\gamma^2} \stackrel{\eqref{eq:ULAPSIEvolution}}\geq \frac{\alpha_i}{\gamma^2 \alpha_{i+1}}\,.
    \end{equation}
    It thus follows from Lemma~\ref{lem:ULAContractionCoefficientAndPSIEvolution}(a) that
    \[
        \varepsilon_i \leq \left(  1 + \frac{2 \alpha_i \eta}{\gamma^2} \right)^{-1} \stackrel{\eqref{ineq:DiscreteTimeIdentityGaussianPSIEvolutionRewritten}}\leq \frac{\gamma^2 \alpha_{i+1}}{\alpha_i}\,.
    \]
    Plugging this bound into \eqref{eq:ProductOfContractionCoefficients}, we obtain
    \begin{equation}\label{eq:Ek_BoundInDiscreteTimeIdentityGaussianMainTheoremPf}
    \MI_{\Phi}(\rho_{0,k}) \leq \frac{\gamma^{2(k-\ell)}\alpha_k}{\alpha_\ell} \MI_{\Phi}(\rho_{0,\ell})\,.
    \end{equation}
    
    Our goal now is to derive a simple upper bound on $\alpha_k/\alpha_\ell$\,. To that end, further denote $c_i = 1/\alpha_i$\,. Then, \eqref{ineq:DiscreteTimeIdentityGaussianPSIEvolutionRewritten} is equivalent to $c_{i+1} \leq \gamma^2 c_i + 2\eta$\,.
    Subtracting $2\eta/(1-\gamma^2)$ from both sides and applying the resulting inequality repeatedly, we have
    \[
    c_k - \frac{2\eta}{1-\gamma^2} \leq \gamma^{2(k-\ell)} \left(c_\ell - \frac{2\eta}{1-\gamma^2}\right)\,.
    \]
    Recalling $\alpha_k = 1/c_k$ yields
    \begin{equation}\label{eq:ULA_LSI_recursion}
    \frac{1}{\alpha_k} = \frac{1-\gamma^{2(k-\ell)}}{\alpha^*} + \frac{\gamma^{2(k-\ell)}}{\alpha_\ell} \ge \min\left\{\frac{1}{\alpha^*}, \frac{1}{\alpha_\ell}\right\} \ge \min\left\{\frac{1}{\alpha}, \frac{1}{\alpha_\ell}\right\},
    \end{equation}
    where $\alpha^* = \alpha ( 1- \eta \alpha/2)\leq \alpha$.
    Therefore, we obtain
    \[
    \frac{\alpha_k}{\alpha_\ell} \le \max\left\{1, \frac{\alpha}{\alpha_\ell}\right\},
    \]
    and hence the claim of the theorem immediately follows from \eqref{eq:Ek_BoundInDiscreteTimeIdentityGaussianMainTheoremPf}.
\end{proof}

\subsection{Proof of Corollary~\ref{cor:ULA}}\label{app:ULAIterationComplexity}

\begin{proof}
    Note that $\tau \le e^{\tau-1}$ for every $\tau\in \R$. Using Theorem \ref{thm:ULA_Main} and this observation with $\tau=1-\alpha \eta$,
    we have
    \[
    \frac{\MI_{\Phi}(\rho_{0,k})}{\max \left\{1, \frac{\alpha}{\alpha_{\PSI}(\rho_\ell)} \right\} \MI_{\Phi}(\rho_{0,\ell})} \stackrel{\eqref{eq:ULA_MainTheorem}}\leq (1-\alpha\eta)^{2(k-\ell)} \le e^{-2\alpha \eta(k-\ell)}.
    \]
    In view of the above inequality, to bound $\MI_{\Phi}(X_0; X_k) \leq \epsilon$, it suffices to bound
    \[
    e^{-2\alpha \eta(k-\ell)} \le \frac{\epsilon}{\max \left\{1, \frac{\alpha}{\alpha_{\PSI}(\rho_\ell)} \right\} \MI_{\Phi}(\rho_{0,\ell})},
    \]
    which gives the desired complexity bound on $k$.
\end{proof}

\section{Proofs for Proximal Sampler}\label{app:ProximalSampler}

In this section, our goal is to prove Theorem~\ref{thm:ProximalMain}.
As mentioned in Section~\ref{sec:MainSectionProximal_SDPIProofSketch}, the SDPI-based proof for the Proximal Sampler proceeds by studying the contraction coefficient and evolution of the $\Phi$-Sobolev constant along the forward step and backward step separately, and then combining them to obtain the contraction coefficient and $\Phi$-Sobolev constant evolution for the entire proximal sampler (Lemma~\ref{lem:proximal_ContractionCoefficientBoundAndPSIEvolution}).

We denote the Proximal Sampler~\eqref{eqs:ProximalSampler} as $\bP_{\prox} = \bP_{\prox}^+ \bP_{\prox}^-$ where $\bP_{\prox}^+$ and $\bP_{\prox}^-$ correspond to the forward and backward steps, respectively.
As each step is an update of probability distributions on $\R^d$, this composition is justified.
Regarding notation, we denote $\rho_k^X \coloneqq \law (X_k)$, $\rho_k^Y \coloneqq \law (Y_k)$, and therefore $\rho^X_k \, \bP_{\prox} = \rho^X_{k+1}$\,, $\rho^X_k\,  \bP_{\prox}^+ = \rho^Y_{k}$\,, and $\rho_k^Y \bP_\prox^- = \rho_{k+1}^X$.

\medskip
\noindent
We repeat the SDE interpretations of both the forward and backward steps mentioned in Section~\ref{sec:ProximalMain}.

\paragraph{Forward Step:}
Suppose we start from $X_0 \sim \rho_0^X$. 
Along the forward step of the Proximal Sampler~\eqref{eqs:ProximalSampler}, $Y_0 \mid X_0 \sim \N(X_0, \eta I)$, so in particular, $\rho_0^Y = \rho_0^X * \N(0, \eta I)$.
Therefore, the action of the forward step $\bP^+_{\prox}$ is via a Gaussian convolution: 
$\rho \bP^+_{\prox} = \rho *\N(0, \eta I)$, which can be interpreted as the solution to the heat flow (generated by the Brownian motion SDE $\D X_t = \D W_t$) at time $\eta > 0$.

\paragraph{Backward Step:}
For the backward step $\bP^-_{\prox}$, it will be helpful to think of the corresponding SDE as the time reversal of the forward step SDE, i.e. the time reversal of the heat flow, which is known as the \emph{backward heat flow}; see~\citep{chen2022improved} and~\citep[Chapter~8.3]{chewi2023log}.

Fixing a step-size $\eta >0$, we have along the forward step ($\D X_t = \D W_t$) that if $X_0 \sim \nu^X$, then $X_{\eta} \sim \nu^Y$. The backward heat flow SDE~\eqref{eq:BackwardHeatFlow} is defined by
\begin{equation*}
    \D Y_t = \nabla \log{(\nu^X * \N_{\eta -t})}(Y_t) \D t + \D W_t\,.
\end{equation*}
By construction, if we start the SDE~\eqref{eq:BackwardHeatFlow} from $Y_0 \sim \mu_0 = \nu^Y$, then for any $t \in [0,\eta]$, the distribution of $Y_t \sim \mu_t$ along~\eqref{eq:BackwardHeatFlow} is given by $\mu_t = \nu^X * \N_{\eta-t}$, and in particular, at time $t = \eta$, $Y_\eta \sim \mu_\eta = \nu^X$.
For the Proximal Sampler, we start the backward SDE~\eqref{eq:BackwardHeatFlow} from $Y_0 \sim \rho_0^Y$, to obtain $X_1 := Y_\eta \sim \rho_1^X$.

\medskip
We present the proofs corresponding to the forward step in Appendix~\ref{app:ProximalForwardStep}, the backward step in Appendix~\ref{app:ProximalBackwardStep}, and the complete Proximal Sampler in Appendix~\ref{app:ProximalCompleteSampler}.

\subsection{Proofs for the Forward Step of the Proximal Sampler}\label{app:ProximalForwardStep}

\subsubsection{Contraction Coefficient for Forward Step}

\begin{lemma}\label{lem:ContractionCoefficientBound_FwdHeatFlow}
    Let $\bP_t$ denote the Markov kernel or heat semigroup corresponding to evolution along $\D X_t = \D W_t$ for time $t$\,. If $\rho$ satisfies a $\Phi$-Sobolev inequality with optimal constant~$\alpha_{\PSI}(\rho)$\,, then
    \[
    \varepsilon_{\sfD_{\Phi}}(\bP_\eta, \rho) \leq \frac{1}{1 + \eta \alpha_{\PSI}(\rho)}\,.
    \]
\end{lemma}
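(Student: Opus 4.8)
The plan is to mirror the argument used for part (a) of Lemma~\ref{lem:ULAContractionCoefficientAndPSIEvolution}, but now applied to the pure heat flow rather than the pushforward-then-convolve map, since the forward step $\bP_\eta$ acts purely by Gaussian convolution $\rho \bP_\eta = \rho * \N(0, \eta I)$. First I would fix an arbitrary $\mu$ with $0 < \sfD_\Phi(\mu \dvert \rho) < \infty$ and track the two simultaneous heat flows $\mu_t \coloneqq \mu * \N(0, tI)$ and $\rho_t \coloneqq \rho * \N(0, tI)$, both of which solve the Brownian-motion SDE $\D X_t = \D W_t$, i.e.~\eqref{eq:GeneralSDE} with $b_t \equiv 0$ and $c = \tfrac12$. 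Applying Lemma~\ref{lem:SimultaneousSDE} with these parameters immediately gives
\[
\frac{\D}{\D t}\sfD_\Phi(\mu_t \dvert \rho_t) = -\tfrac12 \, \FI_\Phi(\mu_t \dvert \rho_t)\,.
\]

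The next step is to control the $\Phi$-Sobolev constant of the second argument $\rho_t$ along the flow, which is exactly where the convolution structure enters. Since $\N(0, tI)$ is $\tfrac{1}{t}$-strongly log-concave, Lemma~\ref{lem:PhiSISLC} gives $\alpha_{\PSI}(\N(0,tI)) \ge 1/t$, and then Lemma~\ref{lem:PhiSIchangeAlongConvolution} (the sub-additivity of inverse $\Phi$-Sobolev constants under convolution) yields
\[
\frac{1}{\alpha_{\PSI}(\rho_t)} \le \frac{1}{\alpha_{\PSI}(\rho)} + t\,, \qquad \text{equivalently} \qquad \alpha_{\PSI}(\rho_t) \ge \frac{\alpha_{\PSI}(\rho)}{1 + t\,\alpha_{\PSI}(\rho)}\,.
\]
Invoking the $\Phi$-Sobolev inequality~\eqref{eq:PhiSI_DistributionBased} for $\rho_t$ to lower bound $\FI_\Phi(\mu_t \dvert \rho_t) \ge 2\alpha_{\PSI}(\rho_t)\,\sfD_\Phi(\mu_t \dvert \rho_t)$, I combine this with the displayed time derivative to obtain the differential inequality
\[
\frac{\D}{\D t}\sfD_\Phi(\mu_t \dvert \rho_t) \le -\alpha_{\PSI}(\rho_t)\,\sfD_\Phi(\mu_t \dvert \rho_t) \le -\frac{\alpha_{\PSI}(\rho)}{1 + t\,\alpha_{\PSI}(\rho)}\,\sfD_\Phi(\mu_t \dvert \rho_t)\,.
\]

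Finally, I would integrate this via Grönwall's inequality from $t=0$ to $t=\eta$. The integrating factor is the exponential of $-\int_0^\eta \frac{\alpha_{\PSI}(\rho)}{1 + t\,\alpha_{\PSI}(\rho)}\,\D t = -\log\!\big(1 + \eta\,\alpha_{\PSI}(\rho)\big)$, so that
\[
\frac{\sfD_\Phi(\mu_\eta \dvert \rho_\eta)}{\sfD_\Phi(\mu \dvert \rho)} \le \frac{1}{1 + \eta\,\alpha_{\PSI}(\rho)}\,.
\]
Taking the supremum over all admissible $\mu$ and recalling Definition~\ref{def:ContractionCoefficient_PhiDivergence} gives the claimed bound on $\varepsilon_{\sfD_{\Phi}}(\bP_\eta, \rho)$. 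The only genuinely delicate point is the handling of the \emph{time-varying} $\Phi$-Sobolev constant $\alpha_{\PSI}(\rho_t)$: unlike a fixed isoperimetric constant, here it degrades as $t$ grows, and I must feed the exact bound $\alpha_{\PSI}(\rho_t) \ge \alpha_{\PSI}(\rho)/(1 + t\,\alpha_{\PSI}(\rho))$ into Grönwall rather than a crude uniform lower bound; keeping this dependence gives precisely the logarithmic integral that produces the clean factor $1/(1 + \eta\,\alpha_{\PSI}(\rho))$. Everything else is a routine application of the cited lemmas, and no smoothness or bijectivity hypotheses are needed since the forward step involves no pushforward.
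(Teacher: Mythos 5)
Your proposal is correct and follows essentially the same route as the paper's proof: apply Lemma~\ref{lem:SimultaneousSDE} with $b_t \equiv 0$ and $c = \tfrac12$, bound the evolving $\Phi$-Sobolev constant via Lemma~\ref{lem:PhiSIchangeAlongConvolution}, and integrate the resulting differential inequality by Grönwall to obtain the factor $1/(1+\eta\,\alpha_{\PSI}(\rho))$. The only (welcome) addition is your explicit invocation of Lemma~\ref{lem:PhiSISLC} to justify that $\N(0,tI)$ satisfies a $\Phi$-Sobolev inequality with constant $1/t$, a step the paper leaves implicit.
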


\begin{proof}
    Let $\pi$ be an arbitrary distribution such that $\sfD_{\Phi}(\pi \dvert \rho) < \infty$\,. And let $\pi_t = \pi \bP_t$ and $\rho_t = \rho \bP_t$\,. Therefore, $\pi_0 = \pi$ and $\rho_0 = \rho$\,. It follows from Definition~\ref{def:PhiSobolevInequality} and Lemma~\ref{lem:SimultaneousSDE} with $\mu_t = \pi_t$, $\nu_t = \rho_t$, $b_t \equiv 0$, and $c = \frac{1}{2}$ that
    \begin{equation}\label{ineq:LSI}
        \frac{\D}{\D t} \sfD_\Phi (\pi_t \dvert \rho_t) \stackrel{\eqref{eq:GeneralSDE}}= -\frac{1}{2} \, \FI_\Phi(\pi_t \dvert \rho_t)
    \leq - \, \alpha_{\PSI}(\rho_t) \sfD_{\Phi}(\pi_t \dvert \rho_t)\,.
    \end{equation}
    Since the forward step is the heat flow, i.e., $\rho_\eta = \rho \bP^+_{\prox} = \rho *\N(0, \eta I)$, it follows from Lemma \ref{lem:PhiSIchangeAlongConvolution} that
    \[
    \frac{1}{\alpha_{\PSI}(\rho_t)} \leq \frac{1}{\alpha_{\PSI}(\rho_0)} + t\,.
    \]
    Plugging the above inequality into \eqref{ineq:LSI} yields that
    \[
    \frac{\D}{\D t} \sfD_\Phi (\pi_t \dvert \rho_t) 
    \leq \frac{- \alpha_{\PSI}(\rho_0)}{1+t \alpha_{\PSI}(\rho_0)} \sfD_{\Phi}(\pi_t \dvert \rho_t).
    \]
    Applying Grönwall's inequality, we have
    \[
    \sfD_{\Phi}(\pi_\eta \dvert \rho_\eta) \leq \exp \left(  - \alpha_{\PSI}(\rho_0) \int_0^{\eta} \frac{1}{1 + t \alpha_{\PSI}(\rho_0)} \D t \right) \sfD_{\Phi}(\pi_0 \dvert \rho_0),
    \]
    which simplifies to
    \[
    \sfD_\Phi (\pi_\eta \dvert \rho_\eta) \leq \frac{\sfD_{\Phi}(\pi_0 \dvert \rho_0)}{1 +  \eta \alpha_{\PSI}(\rho_0)}.
    \]
    The conclusion of the lemma immediately follows from the above inequality and Definition \ref{def:ContractionCoefficient_PhiDivergence}.
\end{proof}

\subsection{Proofs for the Backward Step of the Proximal Sampler}\label{app:ProximalBackwardStep}

\subsubsection{Contraction Coefficient for Backward Step}

\begin{lemma}\label{lem:ContractionCoefficientBound_BackHeatFlow}
    Let $\nu_0$ be $\alpha$-SLC for some $\alpha > 0$ and define $\nu_t = \nu_0 * \N(0, tI)$. Fix some positive constant $T > 0$. For $t \in [0, T]$\,, let $\bP_t^T$ denote evolution along the following SDE for time~$t$
    \[
    \D X_t = \nabla \log \nu_{T-t}(X_t) \D t + \D W_t\,.
    \]
    If $\rho$ satisfies a $\Phi$-Sobolev inequality with optimal constant $\alpha_{\PSI}(\rho)$\,,
    then,
    \begin{equation}\label{ineq:minus}
        \varepsilon_{\sfD_{\Phi}}(\bP_t^T, \rho) \leq \frac{1 + \alpha T - \alpha t}{(1+\alpha T)[1 + \alpha_{\PSI}(\rho) t] - \alpha t}\,.
    \end{equation}
\end{lemma}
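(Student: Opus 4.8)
The plan is to follow the same template used for the forward step (Lemma~\ref{lem:ContractionCoefficientBound_FwdHeatFlow}) and the continuous-time Langevin dynamics (Lemma~\ref{lem:ContractionCoefficientForLangevinDynamics}): run two copies of the backward SDE from different initializations, differentiate the $\Phi$-divergence between them, convert the resulting $\Phi$-Fisher information into $\Phi$-divergence using a $\Phi$-Sobolev inequality \emph{along the trajectory}, and close with Grönwall.

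Concretely, I would fix an arbitrary $\pi$ with $\sfD_{\Phi}(\pi \dvert \rho) < \infty$ and set $\pi_t = \pi \bP_t^T$ and $\rho_t = \rho \bP_t^T$, the two solutions of $\D X_t = \nabla \log \nu_{T-t}(X_t)\,\D t + \D W_t$ started from $\pi_0 = \pi$ and $\rho_0 = \rho$. Since both solve the same time-inhomogeneous SDE, with drift $b_t = \nabla \log \nu_{T-t}$ and diffusion constant $c = \tfrac12$, Lemma~\ref{lem:SimultaneousSDE} gives $\frac{\D}{\D t}\sfD_{\Phi}(\pi_t \dvert \rho_t) = -\tfrac12 \FI_{\Phi}(\pi_t \dvert \rho_t)$. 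Feeding in the $\Phi$-Sobolev inequality satisfied by $\rho_t$ (Definition~\ref{def:PhiSobolevInequality}) turns this into $\frac{\D}{\D t}\sfD_{\Phi}(\pi_t \dvert \rho_t) \le -\alpha_{\PSI}(\rho_t)\,\sfD_{\Phi}(\pi_t \dvert \rho_t)$, so Grönwall's inequality together with Definition~\ref{def:ContractionCoefficient_PhiDivergence} yields $\varepsilon_{\sfD_{\Phi}}(\bP_t^T, \rho) \le \exp\big(-\int_0^t \alpha_{\PSI}(\rho_s)\,\D s\big)$.

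The genuinely hard input, and the main obstacle, is a quantitative lower bound on $\alpha_{\PSI}(\rho_s)$ as $\rho$ is transported along the backward heat flow; this is the role of the companion evolution lemma (Lemma~\ref{lem:PhiSI_Evolution_BackwardHeatFlow}). In contrast to the forward step, which is a pure Gaussian convolution governed by Lemma~\ref{lem:PhiSIchangeAlongConvolution}, or a Lipschitz pushforward governed by Lemma~\ref{lem:PhiSIchangeAlongLipschitzPushforward}, the backward step carries the time-varying score drift $\nabla \log \nu_{T-t}$, whose sharpening as $t \to T$ (that is, as $\nu_{T-t}$ concentrates towards the $\alpha$-SLC measure $\nu_0$) is precisely what improves the functional-inequality constant. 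I expect the evolution lemma to deliver a bound of the form
\[
\alpha_{\PSI}(\rho_s) \ \ge\ \frac{\alpha_{\PSI}(\rho)\,(1+\alpha T)^2}{\big(1+\alpha(T-s)\big)\big(1+\alpha(T-s)+\alpha_{\PSI}(\rho)\,s\,(1+\alpha T)\big)},
\]
in which $\alpha$-strong log-concavity of $\nu_0$ enters through the $\alpha(T-s)$ terms; as a sanity check, taking $\rho = \nu_T$ (so $\rho_s = \nu_{T-s}$ and $\alpha_{\PSI}(\rho) = \alpha/(1+\alpha T)$ from Lemmas~\ref{lem:PhiSIchangeAlongConvolution} and~\ref{lem:PhiSISLC}) collapses the right-hand side to $\alpha/(1+\alpha(T-s))$, the expected $\Phi$-Sobolev constant of $\nu_{T-s}$.

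The final step is a routine computation. Writing $N(s) = 1+\alpha(T-s)$ and $D(s) = N(s) + \alpha_{\PSI}(\rho)\,s\,(1+\alpha T)$, one checks that the displayed lower bound equals $\frac{\D}{\D s}\log\frac{D(s)}{N(s)}$, so, using $D(0) = N(0)$,
\[
\int_0^t \alpha_{\PSI}(\rho_s)\,\D s \ \ge\ \log\frac{D(t)}{N(t)}.
\]
Exponentiating the Grönwall bound gives $\varepsilon_{\sfD_{\Phi}}(\bP_t^T, \rho) \le N(t)/D(t)$, and expanding $D(t) = (1+\alpha T)(1+\alpha_{\PSI}(\rho)\,t) - \alpha t$ recovers exactly the claimed expression~\eqref{ineq:minus}. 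Thus all the difficulty is concentrated in the backward-flow $\Phi$-Sobolev evolution bound; once it is available, the contraction estimate is immediate.
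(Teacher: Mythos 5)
Your proposal is correct and follows essentially the same route as the paper: simultaneous evolution via Lemma~\ref{lem:SimultaneousSDE}, the $\Phi$-Sobolev inequality along the trajectory from Lemma~\ref{lem:PhiSI_Evolution_BackwardHeatFlow} (your conjectured lower bound on $\alpha_{\PSI}(\rho_s)$ is exactly the paper's, rearranged), and Grönwall. The only cosmetic difference is that you evaluate the integral by recognizing the integrand as $\frac{\D}{\D s}\log\frac{D(s)}{N(s)}$, whereas the paper uses partial fractions; both yield the same $A_t$ and hence the same bound~\eqref{ineq:minus}.
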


\begin{proof}
    Let $\pi$ be an arbitrary distribution such that $\sfD_{\Phi}(\pi \dvert \rho) < \infty$\,. And let $\pi_t = \pi \bP_t^T$ and $\rho_t = \rho \bP_t^T$\,. Therefore, $\pi_0 = \pi$ and $\rho_0 = \rho$\,. From Lemma~\ref{lem:SimultaneousSDE} with $\mu_t = \pi_t$, $\nu_t = \rho_t$, $b_t = \nabla \log{\nu_{T-t}}$, and $c = \frac{1}{2}$, and Definition~\ref{def:PhiSobolevInequality}, we have that
    \begin{equation}\label{ineq:ddt}
        \frac{\D}{\D t} \sfD_\Phi (\pi_t \dvert \rho_t) = -\frac{1}{2} \, \FI_\Phi(\pi_t \dvert \rho_t)
    \stackrel{\eqref{eq:PhiSI_DistributionBased}}\leq - \, \alpha_{\PSI}(\rho_t) \sfD_{\Phi}(\pi_t \dvert \rho_t)\,.    
    \end{equation}
    Denote $\alpha_{\PSI}(\rho_t)$ as $\alpha_t$ for $t \in [0,T]$. Then Lemma~\ref{lem:PhiSI_Evolution_BackwardHeatFlow} tells us,
    \[
    \alpha_t \geq \frac{\alpha_0 (1 + \alpha T)^2}{(1 + \alpha (T-t))^2 + \alpha_0 t [1 + \alpha (T-t)](1+\alpha T)}\,.
    \]
    To apply Grönwall's inequality, we need to evaluate the following
    \begin{align*}
    A_t &= \int_0^t \frac{\alpha_0 (1 + \alpha T)^2}{(1 + \alpha (T-s))^2 + \alpha_0 s (1 + \alpha (T-s))(1+\alpha T)} \D s\\
    &= \bigintsss_0^t \frac{\alpha_0 (1 + \alpha T)^2}{\alpha (\alpha - \alpha_0 (1+\alpha T)) \left( s - \frac{1 + \alpha T}{\alpha - \alpha_0(1+\alpha T)}  \right) \left( s - \frac{1 + \alpha T}{\alpha}   \right)} \D s\\
    &= \bigintsss_0^t \frac{1}{ s - \frac{1 + \alpha T}{\alpha - \alpha_0(1+\alpha T)} }  \D s - \bigintsss_0^t \frac{1}{ s - \frac{1 + \alpha T}{\alpha} } \D s \\
    &= \log \frac{1 + \alpha T - t (\alpha - \alpha_0(1 + \alpha T))}{1 + \alpha T} - \log \frac{1 + \alpha T - \alpha t}{1 + \alpha T} \\
    &= \log  \frac{(1+\alpha T)(1 + \alpha_0 t) - \alpha t}{1 + \alpha T - \alpha t}.
    \end{align*}
    Applying Grönwall's inequality to \eqref{ineq:ddt} gives $\sfD_{\Phi}(\pi_t \dvert \rho_t) \leq e^{-A_t} \sfD_{\Phi}(\pi_0 \dvert \rho_0)$\,, i.e.,
    \[
    \sfD_{\Phi}(\pi_t \dvert \rho_t) \leq \frac{1 + \alpha T - \alpha t}{(1+\alpha T)(1 + \alpha_0 t) - \alpha t} \sfD_{\Phi}(\pi_0 \dvert \rho_0)\,.
    \]
    The result follows by using Definition \ref{def:ContractionCoefficient_PhiDivergence} and noting that the choice of $\pi$ is arbitrary.
\end{proof}

\subsubsection{Evolution of $\Phi$-Sobolev constant along Backward Step}

\begin{lemma}\label{lem:PhiSI_Evolution_BackwardHeatFlow}
    Let $\nu_0$ be $\alpha$-SLC for some $\alpha > 0$,
    and define $\nu_t = \nu_0 * \N(0, tI)$. Fix some $T > 0$. For $t \in [0, T]$\,, consider $X_t \sim \rho_t$ which evolves according to
    \[
    \D X_t = \nabla \log \nu_{T-t}(X_t) \D t + \D W_t\,,
    \]
     from $X_0 \sim \rho_0$ where $\rho_0$ satisfies a $\Phi$-Sobolev inequality with optimal constant $\alpha_{\PSI}(\rho_0)$\,.
    Then we have that,
    \begin{equation}\label{ineq:back}
        \frac{1}{\alpha_{\PSI}(\rho_t)} \leq \frac{1}{\alpha_{\PSI}(\rho_0)} \left(  1- \frac{\alpha t}{1+\alpha T}  \right)^2 + \frac{t(1 + \alpha (T-t))}{1 + \alpha T}\,.
    \end{equation}
\end{lemma}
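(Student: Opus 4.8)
The plan is to mirror the discretization argument used for the forward dynamics in Lemma~\ref{lem:PSIEvolutionAlongLangevinDynamics}, now applied to the time-inhomogeneous backward heat flow. First I would discretize the SDE $\D X_t = \nabla\log\nu_{T-t}(X_t)\,\D t + \D W_t$ by an Euler--Maruyama step of size $\delta$, freezing the drift at the left endpoint; the one-step update in law is
\[
\rho_{t+\delta} = G_{\#}\rho_t * \N(0,\delta I), \qquad G(x) = x + \delta\,\nabla\log\nu_{T-t}(x).
\]
Thus each step is a Lipschitz pushforward followed by a Gaussian convolution, which are exactly the two operations controlled by Lemmas~\ref{lem:PhiSIchangeAlongLipschitzPushforward} and~\ref{lem:PhiSIchangeAlongConvolution}.

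The crucial input is a two-sided Hessian bound on the mollified density $\nu_s = \nu_0 * \N(0,sI)$ for $s = T-t > 0$. Writing the score via Tweedie's identity as $\nabla\log\nu_s(x) = \tfrac1s(\E[Y\mid X=x]-x)$, with $Y\sim\nu_0$ and $X = Y + \sqrt s\,Z$, and differentiating once more gives $\nabla^2\log\nu_s(x) = \tfrac1s\big(\tfrac1s\Cov(Y\mid X=x) - I\big)$. Since $\nu_0$ is $\alpha$-SLC, the posterior $Y\mid X=x$ is $(\alpha+1/s)$-strongly log-concave, so Brascamp--Lieb yields $\Cov(Y\mid X=x)\preceq \tfrac{s}{1+\alpha s}I$, while positive semidefiniteness gives $\Cov(Y\mid X=x)\succeq 0$. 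Hence
\[
-\tfrac1s\,I \ \preceq\ \nabla^2\log\nu_s \ \preceq\ -\tfrac{\alpha}{1+\alpha s}\,I.
\]
Consequently $\nabla G = I + \delta\nabla^2\log\nu_{T-t}$ has all eigenvalues in the interval $[\,1-\tfrac{\delta}{T-t},\,1-\tfrac{\alpha\delta}{1+\alpha(T-t)}\,]$, which are positive for $\delta < T-t$, so $G$ is $\gamma$-Lipschitz with $\gamma = 1-\tfrac{\alpha\delta}{1+\alpha(T-t)}$.

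With $c_t := 1/\alpha_{\PSI}(\rho_t)$, Lemma~\ref{lem:PhiSIchangeAlongLipschitzPushforward} gives $c(G_\#\rho_t)\le \gamma^2 c_t$, and Lemma~\ref{lem:PhiSIchangeAlongConvolution} together with $\alpha_{\PSI}(\N(0,\delta I))\ge 1/\delta$ (Lemma~\ref{lem:PhiSISLC}) adds at most $\delta$, producing the recursion $c_{t+\delta} \le \big(1-\tfrac{\alpha\delta}{1+\alpha(T-t)}\big)^2 c_t + \delta$. Expanding to first order and letting $\delta\to 0$ yields the linear differential inequality
\[
\dot c_t \ \le\ -\,\tfrac{2\alpha}{1+\alpha(T-t)}\,c_t + 1.
\]
I would then solve this with the integrating factor $M(t) = \big(\tfrac{1+\alpha T}{1+\alpha(T-t)}\big)^2$, obtaining $M(t)c_t \le c_0 + \int_0^t M(r)\,\D r = c_0 + \tfrac{(1+\alpha T)\,t}{1+\alpha(T-t)}$. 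Dividing by $M(t)$ and using $\tfrac{1+\alpha(T-t)}{1+\alpha T} = 1-\tfrac{\alpha t}{1+\alpha T}$ gives exactly the claimed bound~\eqref{ineq:back}.

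The main obstacle is establishing the two-sided Hessian estimate on the Gaussian-mollified target. The upper bound $\nabla^2\log\nu_s\preceq -\tfrac{\alpha}{1+\alpha s}I$ is what makes the drift map contractive (Lipschitz constant strictly below $1$) and thereby drives the decay of $c_t$, while the universal lower bound $\nabla^2\log\nu_s\succeq -\tfrac1s I$ is needed to certify that $\nabla G$ is positive definite, so that $\gamma$ is genuinely the operator norm of $\nabla G$. This is precisely the time-varying-drift difficulty flagged in the proof sketch of Lemma~\ref{lem:proximal_ContractionCoefficientBoundAndPSIEvolution}. The remaining pieces---the passage from the discrete recursion to the differential inequality, and the integrating-factor computation---are routine, with the limit justified exactly as in Lemma~\ref{lem:PSIEvolutionAlongLangevinDynamics}; the endpoint $t=T$ follows by continuity.
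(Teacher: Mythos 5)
Your proposal is correct and follows essentially the same route as the paper: Euler discretization of the backward SDE into a Lipschitz pushforward composed with a Gaussian convolution, the two $\Phi$-Sobolev evolution lemmas to get the recursion $c_{t+\delta}\le\gamma^2 c_t+\delta$ with $\gamma=1-\tfrac{\alpha\delta}{1+\alpha(T-t)}$, and a passage to the continuous limit. The only cosmetic differences are that the paper cites the known fact that $\nu_s$ is $\tfrac{\alpha}{1+\alpha s}$-SLC rather than rederiving it via Tweedie and Brascamp--Lieb, and it sums the discrete recursion explicitly before taking the limit instead of solving the limiting differential inequality with an integrating factor; both yield the identical bound.
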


\begin{proof}
    As $\nu_0$ is $\alpha$-SLC and $\nu_t = \nu_0 * \N(0, tI)$, 
    it follows from \citep[Theorem 3.7(b)]{saumard2014log} that $\nu_t$ is $\alpha_t$-SLC where $\alpha_t = \alpha/(1 + t\alpha)$\,.
    Writing $\nu_t \propto \exp{(-f_t)}$, the backward heat flow SDE can be rewritten as
\[
\D X_t = -\nabla f_{T-t}(X_t) \D t + \D W_t\,.
\] 
Now consider a discretization of this SDE with a sufficiently small step-size $\eta  > 0$ to get
\[
X_{k+1} = X_k - \eta \nabla f_{T - \eta k}(X_k) + \sqrt{\eta} Z_k\,,
\]
where $Z_k \sim \N(0, I)$\,.
We therefore know that $f_{T-\eta k}$ is $\beta_k$-strongly convex where 
\begin{equation}\label{eq:beta}
    \beta_k = \frac{\alpha}{1 + \alpha(T - \eta k)}.
\end{equation}
Letting $X_k \sim \rho_k$, we therefore have that 
\begin{equation}\label{eq:evolution}
    \rho_{k+1} = (I - \eta \nabla f_{T - \eta k})_{\#} \rho_k * \N(0, \eta I),
\end{equation}
and the mapping $I - \eta \nabla f_{T - \eta k}$ is $(1-\eta \beta_k)$-Lipschitz continuous.
Denote $c_k = 1/\alpha_{\PSI}(\rho_k)$\,. 
In view of \eqref{eq:evolution},
using Lemmas~\ref{lem:PhiSIchangeAlongLipschitzPushforward} and~\ref{lem:PhiSIchangeAlongConvolution}, we have the following recursion
    \[
    c_{k+1} \stackrel{\eqref{ineq:convolution}}\le \frac{1}{\alpha_{\PSI}((I - \eta \nabla f_{T-\eta k})_{\#}\rho_k)} + \eta \stackrel{\eqref{ineq:pushforward}}\leq (1-\eta \beta_k)^2 c_k + \eta\,.
    \]
Upon further iteration this yields 
\[
c_k \leq \prod_{i=0}^k (1-\eta \beta_i)^2 c_0 + \eta \left[ 1 + \sum_{j=1}^{k-1} \prod_{i=j}^{k-1} (1-\eta \beta_i)^2 \right]\,.
\]
Plugging in the expression for $\beta_i$ in \eqref{eq:beta} and simplifying, we get
\[
c_k \leq \left( 1 - \frac{\alpha \eta k}{1 + \alpha T} \right)^2 c_0 + \eta \left[  1 + \sum_{j=1}^{k-1} \left(  \frac{1 + \alpha (T - \eta k)}{1 + \alpha (T-\eta j)}  \right)^2   \right]\,.
\]
Now taking the limit $\eta \to 0$\,, $\eta k \to t$\,, and $\eta j \to s$\,, we get 
\[
c_t \leq \left( 1 - \frac{\alpha t}{1 + \alpha T} \right)^2 c_0 + (1 + \alpha (T-t))^2 \int_0^t \frac{\D s}{(1 + \alpha(T-s))^2}\,,
\]
which simplifies to
\[
c_t \leq \left( 1 - \frac{\alpha t}{1 + \alpha T} \right)^2 c_0 + \frac{t(1 + \alpha (T-t))}{1 + \alpha T}\,.
\]
The desired claim \eqref{ineq:back} now follows from the fact that $c_t = 1/\alpha_{\PSI}(\rho_t)$.
\end{proof}

\subsection{Proofs for the Complete Proximal Sampler}\label{app:ProximalCompleteSampler}

\subsubsection{Proof of Lemma~\ref{lem:proximal_ContractionCoefficientBoundAndPSIEvolution}}\label{app:PfOfproximal_ContractionCoefficientBoundAndPSIEvolution}

\begin{proof}
\textbf{Part (a):~}
It follows from Definition \ref{def:ContractionCoefficient_PhiDivergence} and the fact that $\bP_{\prox} = \bP^+_{\prox}\bP^-_{\prox}$ that
    \begin{align}
        \varepsilon_{\sfD_{\Phi}}(\bP_{\prox}\,, \rho) &\stackrel{\eqref{eq:PhiDivergenceContractionCoefficient}}\coloneqq \sup_{\pi\,:\, 0<\sfD_{\Phi}(\pi\dvert \rho) < \infty} \frac{\sfD_{\Phi}(\pi\, \bP_\prox \dvert \rho \, \bP_\prox)}{\sfD_{\Phi} (\pi \dvert \rho)} \nn \\
        &= \sup_{\pi\,:\, 0<\sfD_{\Phi}(\pi\dvert \rho) < \infty} \frac{\sfD_{\Phi}(\pi\, \bP_{\prox}^+  \bP_{\prox}^- \dvert \rho \, \bP_{\prox}^+  \bP_{\prox}^-)}{\sfD_{\Phi}(\pi\, \bP_\prox^+ \dvert \rho \, \bP_\prox^+)} \times  \frac{\sfD_{\Phi}(\pi\, \bP_\prox^+ \dvert \rho \, \bP_\prox^+)}{\sfD_{\Phi} (\pi \dvert \rho)} \nn \\
        &\stackrel{\eqref{eq:PhiDivergenceContractionCoefficient}}\leq \varepsilon_{\sfD_{\Phi}}(\bP_{\prox}^+\,, \rho) \,\varepsilon_{\sfD_{\Phi}}(\bP_{\prox}^-\,, \rho\,\bP_{\prox}^+), \label{ineq:decompose}
    \end{align}
    where the inequality is also due to Definition \ref{def:ContractionCoefficient_PhiDivergence}.
    Observe that $\bP_{\prox}^+$ is the forward heat flow operation for time $\eta > 0$, and hence $\rho \bP_{\prox}^+ = \rho * \N(0, \eta I)$. It thus follows from Lemma \ref{lem:PhiSIchangeAlongConvolution} that
    \begin{equation}\label{ineq:forward1}
        \alpha_{\PSI} (\rho \,\bP_{\prox}^+) \stackrel{\eqref{ineq:convolution}}\geq \frac{\alpha_{\PSI} (\rho)}{1 + \eta \alpha_{\PSI} (\rho)},
    \end{equation}
    and from Lemma \ref{lem:ContractionCoefficientBound_FwdHeatFlow} that
    \begin{equation}\label{ineq:plus}
        \varepsilon_{\sfD_{\Phi}}(\bP_{\prox}^+, \rho) \leq \frac{1}{1 + \eta \alpha_{\PSI}(\rho)}.
    \end{equation}
    In view of Lemma \ref{lem:ContractionCoefficientBound_BackHeatFlow}, $\bP_{\prox}^-$ is the backward heat flow operation with $\nu_0 = \nu^X$\,, $T = \eta$, and for time $\eta$\,. 
    Hence, using \eqref{ineq:decompose}, \eqref{ineq:forward1}, \eqref{ineq:plus}, and Lemma~\ref{lem:ContractionCoefficientBound_BackHeatFlow} with $T=t=\eta$, $\bP_t^T=\bP_{\prox}^-$, and $\rho=\rho \bP_{\prox}^+$, we obtain
    \begin{align*}
        \varepsilon_{\sfD_{\Phi}}(\bP_{\prox}\,, \rho) 
    &\stackrel{\eqref{ineq:decompose},\eqref{ineq:plus}}\leq \frac{1}{1 + \eta \alpha_{\PSI}(\rho)} \,\varepsilon_{\sfD_{\Phi}}(\bP_{\prox}^-\,, \rho\,\bP_{\prox}^+) \\
    &\stackrel{\eqref{ineq:minus}}\leq \frac{1}{1 + \eta \alpha_{\PSI}(\rho)} \times \frac{1}{(1 + \alpha \eta)(1 + \alpha_{\PSI}(\rho \bP_{\prox}^+)\eta)- \alpha \eta} \\
    &\stackrel{\eqref{ineq:forward1}}\leq \frac{1}{1 + 2\eta \alpha_{\PSI} (\rho) + \eta^2 \alpha \alpha_{\PSI} (\rho) }.
    \end{align*}
    This completes part (a).

\medskip
\noindent
\textbf{Part (b):~}
Observe that $\bP_{\prox}^+$ is the forward heat flow operation for time $\eta > 0$, and hence $\rho \bP_{\prox}^+ = \rho * \N(0, \eta I)$. It thus follows from Lemma \ref{lem:PhiSIchangeAlongConvolution} that
    \begin{equation}\label{ineq:forward}
        \frac{1}{\alpha_{\PSI}(\rho \bP_{\prox}^+)} \stackrel{\eqref{ineq:convolution}}\leq \frac{1}{\alpha_{\PSI}(\rho)} + \eta\,.
    \end{equation}
    In view of Lemma \ref{lem:PhiSI_Evolution_BackwardHeatFlow}, $\bP_{\prox}^-$ is the backward heat flow operation with $\nu_0 = \nu^X$, $T = \eta$, and for time $\eta$\,. 
    Hence, using \eqref{ineq:forward} and Lemma~\ref{lem:PhiSI_Evolution_BackwardHeatFlow} with $T=t=\eta$ and $\rho_0=\rho \bP_{\prox}^+$, we obtain
    \begin{align*}
        \frac{1}{\alpha_{\PSI}(\rho\, \bP_{\prox})} &\stackrel{\eqref{ineq:back}}\leq \frac{1}{\alpha_{\PSI}(\rho \bP_{\prox}^+)} \frac{1}{(1 + \alpha \eta)^2} + \frac{\eta}{1 + \alpha \eta} \\
        &\stackrel{\eqref{ineq:forward}}\leq \frac{1+\alpha_{\PSI}(\rho) \eta}{\alpha_{\PSI}(\rho)} \frac{1}{(1 + \alpha \eta)^2} + \frac{\eta}{1 + \alpha \eta}.
    \end{align*}
    Therefore, the desired claim is proved.
\end{proof}

\subsubsection{Proof of Theorem~\ref{thm:ProximalMain}}\label{app:PfOfProximalMain}

\begin{proof}
    First, it follows from Lemma \ref{lem:IndependenceContractionBySDPI} for the Proximal Sampler that for any $\ell \geq 1$ and $k \geq \ell$
    \begin{equation}\label{ineq:MI}
        \MI_{\Phi}(\rho^X_{0,k}) \leq  \prod_{i=\ell}^{k-1} \varepsilon_i \, \MI_{\Phi}(\rho^X_{0,\ell}), \quad \varepsilon_i = \varepsilon_{\sfD_{\Phi}}(\bP_{\prox}, \rho_i^X).
    \end{equation}
    Throughout the proof, let $\alpha_i$ denote $\alpha_{\PSI}(\rho^X_i)$.
    Recall from Lemma~\ref{lem:proximal_ContractionCoefficientBoundAndPSIEvolution}(b) that
\begin{equation}\label{ineq:alpha}
    \frac{1}{\alpha_{i+1}} \leq \frac{1+\alpha_i \eta}{\alpha_i(1 + \alpha \eta)^2} + \frac{\eta}{1 + \alpha \eta}\,.
\end{equation}

Fix $\ell$ such that $\rho_\ell$ satisfies a $\Phi$-Sobolev inequality. This holds by assumption.
    
    \paragraph{\underline{Case 1: $\alpha \leq \alpha_\ell$}}\mbox{}\newline
Using $\alpha_\ell \geq \alpha$ and \eqref{ineq:alpha} with $i=\ell$, we have
\[
\frac{1}{\alpha_{\ell +1}} \stackrel{\eqref{ineq:alpha}}\leq \frac{1+\alpha_\ell \eta}{\alpha_\ell(1 + \alpha \eta)^2} + \frac{\eta}{1 + \alpha \eta} \leq \frac{1}{\alpha(1 + \alpha \eta)} + \frac{\eta}{1 + \alpha \eta} = \frac{1}{\alpha},
\]
which implies that $\alpha_{\ell +1 } \geq \alpha$\,. Repeating this argument on \eqref{ineq:alpha}, we have that $\alpha_i \geq \alpha$ for all $i \geq \ell$\,. The contraction coefficient bound in Lemma~\ref{lem:proximal_ContractionCoefficientBoundAndPSIEvolution}(a) therefore simplifies to 
\[
\varepsilon_i \leq \frac{1}{1 + 2\eta \alpha_i + \eta^2 \alpha \alpha_i } \leq \frac{1}{ (1 + \eta \alpha)^2 }\,,
\]
for all $i \geq \ell$\,. 
Plugging this inequality into \eqref{ineq:MI}, we obtain
\begin{equation}\label{ineq:case1}
    \MI_{\Phi}(\rho^X_{0,k}) \leq \frac{\MI_{\Phi}(\rho^X_{0,\ell})}{(1 + \eta \alpha)^{2(k-\ell)}}\,.
\end{equation}

\paragraph{\underline{Case 2: $\alpha > \alpha_\ell$}}\mbox{}\newline
Using $\alpha > \alpha_\ell$ and \eqref{ineq:alpha}, we have
\begin{equation}\label{ineq:recur}
    \frac{1}{\alpha_{i+1}} \leq \frac{1+\alpha_i \eta}{\alpha_i(1 + \alpha_\ell \eta)^2} + \frac{\eta}{1 + \alpha_\ell \eta}\,,
\end{equation}
for all $i \geq \ell$\,.
We next prove by induction that $\alpha_i \geq \alpha_\ell$ for all $i \geq \ell$. This is of course true when $i=\ell$. Assume for some $i\ge \ell$ that $\alpha_i \geq \alpha_\ell$. Then, it follows from \eqref{ineq:recur} that
\[
\frac{1}{\alpha_{i+1}} \leq \frac{1+\alpha_\ell \eta}{\alpha_\ell(1 + \alpha_\ell \eta)^2} + \frac{\eta}{1 + \alpha_\ell \eta} =\frac{1}{\alpha_\ell},
\]
and hence that the claim is true for the case $i+1$.
Together with this conclusion, Lemma~\ref{lem:proximal_ContractionCoefficientBoundAndPSIEvolution}(a) simplifies to 
\[
\varepsilon_i \leq \frac{1}{1 + 2\eta \alpha_i + \eta^2 \alpha \alpha_i } \leq \frac{1}{ (1 + \eta \alpha_\ell)^2 }\,,
\] 
for all $i \geq \ell$\,. 
Plugging this inequality into \eqref{ineq:MI}, we obtain 
\begin{equation}\label{ineq:case2}
    \MI_{\Phi}(\rho^X_{0,k}) \leq \frac{\MI_{\Phi}(\rho^X_{0,\ell})}{(1 + \eta \alpha_\ell)^{2(k-\ell)}}\,.
\end{equation}
Finally, the theorem follows by combining \eqref{ineq:case1} and \eqref{ineq:case2}.
\end{proof}

\subsubsection{Proof of Corollary~\ref{cor:PS}}\label{app:PSIterationComplexity}

\begin{proof}
Note that $\tau \le e^{\tau-1}$ for every $\tau\in \R$. Using Theorem \ref{thm:ProximalMain} and this observation with
\[
\tau = \frac{1}{1+\eta \tilde \alpha}, \quad \tilde \alpha = \min \{ \alpha, \alpha_{\PSI}(\rho^X_\ell) \},
\]
we have
    \[
    \frac{\MI_{\Phi}(\rho^X_{0,k})}{\MI_{\Phi}(\rho^X_{0,\ell})} \stackrel{\eqref{ineq:PS}}\leq \frac{1}{\left(1 + \eta \tilde \alpha\right)^{2(k-\ell)} }\le \exp\left(-\frac{2 \eta \tilde \alpha (k-\ell)}{1 + \eta \tilde \alpha }\right).
    \]
    In view of the above inequality, to bound $\MI_{\Phi}(X_0; X_k) \leq \epsilon$, it suffices to bound
    \[
    \exp\left(-\frac{2 \eta \tilde \alpha (k-\ell)}{1 + \eta \tilde \alpha }\right) \le \frac{\epsilon}{\MI_{\Phi}(\rho^X_{0,\ell})},
    \]
    which gives the desired complexity bound on $k$.
\end{proof}

\section{Examples for the Ornstein-Uhlenbeck Process and the Heat Flow}\label{app:OU_HeatFlow}

In this section, we fix $\Phi(x) = x \log x$ and discuss the convergence of mutual information for the Ornstein-Uhlenbeck (OU) process and the heat flow.
The entropy functional of a distribution $\rho$ is defined to be $\sfH(\rho) = -\E_{\rho}[\log{\rho}]$. Recall that the standard mutual information is
\begin{equation}\label{eq:MI}
    \MI(\rho^{XY}) = \KL\left(\rho^{XY} \dvert \rho^X \otimes \rho^Y \right) = \sfH(\rho^Y) - \E_{x \sim \rho^X}[\sfH(\rho_{Y \mid X}(\,\cdot \mid x))]\,.
\end{equation}

\subsection{Convergence Rates in Continuous Time along Langevin Dynamics}\label{app:Tightness_LD}

We provide two propositions showing that the convergence rates of mutual information along OU process and heat flow are $\Theta\left(e^{-2\alpha t}\right)$ and $\Theta\left(1/t\right)$, respectively. This shows the tightness of Theorems \ref{thm:LangevinDynamicsMain} and \ref{thm:MI_LD_regularize}. Before proceeding, we mention the following fact which will be useful in the analysis.

\begin{fact}\label{fact:epi}
    \citep[Theorem~17.7.3]{cover1999elements}
    The {\em entropy power} of a probability distribution $\rho$ on $\R^d$ is
\[
\Lambda(\rho) = \frac{1}{2\pi e} e^{\frac{2}{d} \sfH(\rho)}.
\]
The {\em entropy power inequality} states for independent random variables with distributions $\rho$ and $\nu$,
\[
\Lambda(\rho \ast \nu) \ge \Lambda(\rho) + \Lambda(\nu).
\]
\end{fact}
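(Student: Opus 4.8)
The plan is to prove the entropy power inequality by Stam's classical argument \citep{stam1959some}, which rests on exactly the two ingredients central to this paper: de Bruijn's identity and the behaviour of Fisher information under convolution. For a random vector $U$ on $\R^d$ with smooth density $p_U$, write $\mathsf{J}(U) := \E[\|\nabla \log p_U(U)\|^2]$ for its (scalar) Fisher information. Since $\Lambda(cU) = c^2\Lambda(U)$ and $\Lambda(\N(0,\sigma^2 I)) = \sigma^2$, the normalizing constant $\frac{1}{2\pi e}$ plays no role, and it is equivalent to prove $\Lambda(\rho\ast\nu)\ge\Lambda(\rho)+\Lambda(\nu)$ with $\Lambda(U):=e^{\frac{2}{d}\sfH(U)}$. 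Write $X\sim\rho$ and $Y\sim\nu$ independent, so that $\rho\ast\nu$ is the law of $X+Y$ and the goal is $\Lambda(X)+\Lambda(Y)\le\Lambda(X+Y)$.

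\textbf{Fisher information inequality.} First I would show that for independent $X,Y$,
\[
\frac{1}{\mathsf{J}(X+Y)} \;\ge\; \frac{1}{\mathsf{J}(X)} + \frac{1}{\mathsf{J}(Y)}.
\]
Differentiating $p_{X+Y}(s)=\int p_X(x)\,p_Y(s-x)\,\D x$ shows that the score of the sum is a conditional expectation of the individual scores, giving, for every $\beta\in[0,1]$,
\[
\nabla\log p_{X+Y}(s) \;=\; \E\big[\beta\,\nabla\log p_X(X) + (1-\beta)\,\nabla\log p_Y(Y)\,\big|\,X+Y=s\big].
\]
Applying the conditional Jensen inequality for $\|\cdot\|^2$, taking expectations, and using independence together with $\E[\nabla\log p_X(X)]=0$ to kill the cross term yields $\mathsf{J}(X+Y)\le\beta^2\mathsf{J}(X)+(1-\beta)^2\mathsf{J}(Y)$; optimizing over $\beta$ gives the stated inequality.

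\textbf{de Bruijn's identity and integration along coupled flows.} Along the heat flow $U_t=U+\sqrt{t}\,Z$ with $Z\sim\N(0,I)$ one has $\frac{\D}{\D t}\sfH(U_t)=\tfrac12\,\mathsf{J}(U_t)$; this is the entropy specialization of the Fokker--Planck computation underlying Lemma~\ref{lem:SimultaneousSDE} (the same mechanism as in the de Bruijn identity of Lemma~\ref{lem:PhiMutualDeBruijn}), and is classical \citep{stam1959some}. I would then evolve $X$ and $Y$ through independent heat flows run at time-varying variance rates inversely proportional to their current Fisher information: letting $X_t,Y_t$ denote the flows with instantaneous rates $\lambda(t)=1/\mathsf{J}(X_t)$ and $\mu(t)=1/\mathsf{J}(Y_t)$, the sum $X_t+Y_t$ is again a heat flow of $X+Y$ with added variance $\int_0^t(\lambda+\mu)$. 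Writing $\Lambda_X,\Lambda_Y,\Lambda_S$ for the three entropy powers, de Bruijn and the chain rule give $\Lambda_X'=\tfrac1d\Lambda_X$ and $\Lambda_Y'=\tfrac1d\Lambda_Y$ (since $\lambda\mathsf{J}(X_t)=\mu\mathsf{J}(Y_t)=1$) and $\Lambda_S'=\tfrac{\lambda+\mu}{d}\,\mathsf{J}(X_t+Y_t)\,\Lambda_S$. Considering the ratio $s(t):=\big(\Lambda_X(t)+\Lambda_Y(t)\big)/\Lambda_S(t)$, a short computation reduces $s'(t)\ge0$ to $1\ge(\lambda+\mu)\,\mathsf{J}(X_t+Y_t)$, which is precisely the Fisher information inequality above. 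Hence $s$ is nondecreasing; since the flows become Gaussian with diverging variance as $t\to\infty$ (so $s(\infty)=1$), we get $s(0)\le1$, i.e. $\Lambda(X)+\Lambda(Y)\le\Lambda(X+Y)$.

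The hard part will be the last step: recognizing that the \emph{inverse--Fisher--information} rate is the coupling that makes the Fisher information inequality exactly cancel in $s'(t)$, and then rigorously justifying the boundary value $s(\infty)=1$ along with the regularity needed throughout — finiteness of $\mathsf{J}$, smoothness and differentiability of $\sfH$, and decay conditions validating the integration by parts in the score identity. The Fisher information inequality and de Bruijn's identity themselves are routine given the convolution and Fokker--Planck machinery already developed in the paper.
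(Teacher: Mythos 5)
The paper does not prove this statement: it is quoted as a known result, with the proof deferred to \citet[Theorem~17.7.3]{cover1999elements}. So there is no internal proof to compare against, and the question is only whether your sketch is a sound route to the entropy power inequality. It is: what you outline is the classical Stam--Blachman argument, and the three ingredients are all correct as stated. The score identity $\nabla\log p_{X+Y}(s)=\E[\nabla\log p_X(X)\mid X+Y=s]=\E[\nabla\log p_Y(Y)\mid X+Y=s]$ follows by differentiating the convolution in the two possible ways, conditional Jensen plus the vanishing cross term gives $\mathsf{J}(X+Y)\le\beta^2\mathsf{J}(X)+(1-\beta)^2\mathsf{J}(Y)$, and optimizing $\beta$ yields the Blachman--Stam inequality; the de Bruijn identity is indeed the entropy specialization of the Fokker--Planck computation in Lemma~\ref{lem:SimultaneousSDE}; and your verification that $s'(t)\ge 0$ reduces exactly to $(\lambda+\mu)\,\mathsf{J}(X_t+Y_t)\le 1$, which is the Fisher information inequality, is right. (The paper itself gestures at these same ingredients: the Remark following Proposition~\ref{thm:OU} invokes Stam's inequality and the Blachman--Stam inequality as an alternative to the EPI.) You correctly flag where the real work lies: well-posedness of the self-referential rates $\lambda=1/\mathsf{J}(X_t)$ (better handled by parametrizing directly by the accumulated variances and solving the resulting ODE), positivity and finiteness of $\mathsf{J}$ for $t>0$ with a limiting argument as $t\downarrow 0$, and the boundary value $s(\infty)=1$, which under a finite-second-moment assumption follows from the sandwich $\tau\le\Lambda(U+\sqrt{\tau}Z)\le\det(\Cov(U)+\tau I)^{1/d}=\tau(1+O(1/\tau))$ together with the trivial identity $\bigl(\int_0^t\lambda+\int_0^t\mu\bigr)/\int_0^t(\lambda+\mu)=1$. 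With those details filled in, this is a complete and standard proof of the cited fact.
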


\subsubsection{Mutual Information along the OU Process} \label{app:OU}

The OU process is the Langevin dynamics for a Gaussian target distribution $\nu = \N(c, \Sigma)$ for some $c \in \R^d$ and $\Sigma \succ 0$.
Here, for simplicity, we consider the case where $c = 0$ and $\Sigma = \frac{1}{\alpha} I$ for some $\alpha > 0$.
In this case, \eqref{eq:LangevinDynamics} becomes
\begin{equation}\label{eq:OU}
    \D X_t = -\alpha X_t \, \D t + \sqrt{2} \D W_t.
\end{equation}
The solution to the OU process is 
\begin{equation}\label{sol:OU}
    X_t = e^{-\alpha t} X_0 + \sqrt{\tau_\alpha(t)} Z,
\end{equation}
where $Z \sim \N(0,I)$ and 
\begin{equation}\label{eq:tau}
    \tau_\alpha(t) = \frac{1-e^{-2\alpha t}}{\alpha}.
\end{equation}

We have the following theorem that describes the rate of convergence of mutual information along the OU process. Note when $\rho_0$ is a Gaussian, $\rho_t$ is a Gaussian for all $t$, and we can compute the mutual information explicitly; however, our statement holds for more general initial distributions $\rho_0$.

\begin{proposition}\label{thm:OU}
Let $X_t \sim \rho_t$ evolve along the OU process \eqref{eq:OU} from $X_0 \sim \rho_0$, where $\rho_0$ satisfies $\Cov(\rho_0) \preceq J \, I$, and let $\rho_{0,t}$ be the joint law of $(X_0,X_t)$.
Then, we have
\begin{equation}\label{ineq:upper}
    \MI(\rho_{0,t}) \le \frac{d}{2} \log\left(1 + \frac{\alpha J}{e^{2\alpha t}-1}\right) \le \frac{\alpha d J}{2(e^{2\alpha t}-1)}\,,
\end{equation}
and,
\begin{equation}\label{ineq:lower}
    \MI(\rho_{0,t}) \ge \frac{d}{2} \log \left( \frac{\alpha e^{\frac{2}{d} \sfH(\rho_0)}}{2\pi e (e^{2\alpha t}-1)} + 1 \right)\,.
\end{equation}
Therefore, along the OU process, as $t \to \infty$, we have
\begin{equation}\label{eq:mi-rate}
    \MI(\rho_{0,t}) = \Theta\left(e^{-2\alpha t}\right).
\end{equation}
\end{proposition}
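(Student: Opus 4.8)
The plan is to reduce the mutual information to a difference of entropies via the decomposition \eqref{eq:MI} and then control the marginal entropy $\sfH(\rho_t)$ from both sides. First I would use the explicit solution \eqref{sol:OU}, namely $X_t = e^{-\alpha t} X_0 + \sqrt{\tau_\alpha(t)}\, Z$ with $Z \sim \N(0,I)$ independent of $X_0$, to observe that conditionally on $X_0 = x_0$ the variable $X_t$ is Gaussian with covariance $\tau_\alpha(t) I$, and crucially this covariance does not depend on $x_0$. Hence the conditional entropy equals the constant $\frac{d}{2}\log(2\pi e\, \tau_\alpha(t))$, and \eqref{eq:MI} collapses to
\begin{equation*}
\MI(\rho_{0,t}) = \sfH(\rho_t) - \frac{d}{2}\log\bigl(2\pi e\, \tau_\alpha(t)\bigr).
\end{equation*}
Thus the entire problem becomes one of two-sided control of the single quantity $\sfH(\rho_t)$.

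For the upper bound in \eqref{ineq:upper}, I would invoke the maximum-entropy property of the Gaussian: among all distributions with a prescribed covariance, the Gaussian maximizes entropy. Since $\Cov(X_t) = e^{-2\alpha t}\Cov(X_0) + \tau_\alpha(t) I \preceq (e^{-2\alpha t} J + \tau_\alpha(t)) I$ by the hypothesis $\Cov(\rho_0) \preceq J I$, monotonicity of the determinant gives $\sfH(\rho_t) \le \frac{d}{2}\log(2\pi e (e^{-2\alpha t} J + \tau_\alpha(t)))$. Subtracting the conditional entropy and simplifying via the identity $\frac{e^{-2\alpha t}}{\tau_\alpha(t)} = \frac{\alpha}{e^{2\alpha t}-1}$ (from \eqref{eq:tau}) yields $\MI(\rho_{0,t}) \le \frac{d}{2}\log(1 + \frac{\alpha J}{e^{2\alpha t}-1})$, and the second inequality in \eqref{ineq:upper} follows from $\log(1+x) \le x$.

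For the lower bound in \eqref{ineq:lower}, I would apply the entropy power inequality (Fact~\ref{fact:epi}) to the independent sum \eqref{sol:OU}. Using the scaling relation $\Lambda(aX) = a^2 \Lambda(X)$ (a consequence of $\sfH(aX) = \sfH(X) + d\log|a|$) together with $\Lambda(Z) = 1$, the EPI gives $\Lambda(\rho_t) \ge e^{-2\alpha t}\Lambda(\rho_0) + \tau_\alpha(t)$. Converting back through $\sfH(\rho_t) = \frac{d}{2}\log(2\pi e\, \Lambda(\rho_t))$ and subtracting the conditional entropy produces $\MI(\rho_{0,t}) \ge \frac{d}{2}\log(1 + \frac{e^{-2\alpha t}\Lambda(\rho_0)}{\tau_\alpha(t)})$, which is exactly \eqref{ineq:lower} after substituting $\Lambda(\rho_0) = \frac{1}{2\pi e}e^{\frac{2}{d}\sfH(\rho_0)}$ and \eqref{eq:tau}. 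The rate \eqref{eq:mi-rate} then follows by sandwiching: as $t \to \infty$ the upper bound is $O(e^{-2\alpha t})$, while the lower bound, using $\log(1+x) \ge x/2$ for small $x$, is $\Omega(e^{-2\alpha t})$.

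The main obstacle is the lower bound. Whereas the upper bound rests on the soft maximum-entropy principle, the matching lower bound genuinely requires the sharp entropy power inequality and careful bookkeeping of how entropy power transforms under the contraction $X_0 \mapsto e^{-\alpha t}X_0$ and under convolution with the Gaussian noise; getting the constants to line up with \eqref{ineq:lower} is where the care is needed. The remaining ingredients — identifying the conditional law, the covariance recursion, and the asymptotic matching of the two bounds — are routine.
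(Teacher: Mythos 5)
Your proposal is correct and follows essentially the same route as the paper's proof: the same reduction of $\MI(\rho_{0,t})$ to $\sfH(\rho_t)$ minus the constant conditional entropy $\frac{d}{2}\log(2\pi e\,\tau_\alpha(t))$, the same Gaussian maximum-entropy argument for the upper bound, and the same entropy power inequality applied to the independent decomposition $X_t = e^{-\alpha t}X_0 + \sqrt{\tau_\alpha(t)}\,Z$ for the lower bound (you phrase the EPI step in entropy-power form with the scaling $\Lambda(aX)=a^2\Lambda(X)$, which is equivalent to the paper's computation $\sfH(e^{-\alpha t}X_0)=\sfH(\rho_0)-d\alpha t$). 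The asymptotic sandwich via $\log(1+x)\le x$ and $\log(1+x)\ge x/2$ also matches.
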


\begin{proof}
It follows from \eqref{sol:OU} that
\[
\rho_{t \mid 0} = \N(e^{-\alpha t} X_0, \tau_\alpha(t) I), \quad \sfH(\rho_{t \mid 0 }) = \frac{d}{2} \log \left(2\pi e \tau_\alpha(t) \right),
\]
and
\begin{equation}\label{eq:exp-ent}
    \E_{\rho_0}[\sfH(\rho_{t \mid 0})]= \frac{d}{2} \log \left( 2\pi e \tau_\alpha(t) \right).
\end{equation}
We next bound $\sfH(\rho_t)$ where,
$$\rho_t(y) = \int_{\R^d} \rho_0(x) \, \mathbb{P}_{\N(e^{-\alpha t} x, \tau_\alpha(t) I)}(y) \, \D x\,,$$
and $\mathbb{P}_{\N(c, \Sigma)}$ is the density of $\N(c, \Sigma)$\,.
We first derive an upper bound on $\sfH(\rho_t)$.
Using the fact that for a fixed covariance, the Gaussian distribution maximizes entropy, and the observation that 
\[
\Cov(\rho_t) = e^{-2\alpha t} \Cov(\rho_0) + \tau_\alpha(t) I,
\]
we have
\begin{align}
\sfH(\rho_t) &\le \sfH(\N(0,\Cov(\rho_t)))
= \frac{d}{2} \log (2\pi e) + \frac{1}{2} \log \det\left(e^{-2\alpha t} \Cov(\rho_0) + \tau_\alpha(t) I\right) \nn \\
&\le \frac{d}{2} \log (2\pi e) + \frac{1}{2} \log \det\left( 
e^{-2\alpha t} J I + \tau_\alpha(t) I\right)
= \frac{d}{2} \log \left(2\pi e (e^{-2\alpha t} J + \tau_\alpha(t)) \right), \label{ineq:ent-ub}
\end{align}
where the second inequality follows from the assumption that $\Cov(\rho_0) \preceq J \, I$.
Next, we derive a lower bound on $\sfH(\rho_t)$.
Considering the solution \eqref{sol:OU} and letting $\rho$ denote the distribution of $e^{-\alpha t} X_0$ and $\nu = \N(0, \tau_\alpha(t) I)$, then we have $\rho_t = \rho \ast \nu$.
By definition, we have
\[
\sfH(\rho) = \sfH(\law{(e^{-\alpha t} X_0)}) = \sfH(\rho_0) - d \alpha t.
\]
It thus follows from the entropy power inequality (i.e., Fact \ref{fact:epi}) that
\begin{equation}\label{ineq:ent-lb}
    \sfH(\rho_t) \ge \frac{d}{2} \log \left( e^{\frac{2}{d} \sfH(\law{(e^{-\alpha t} X_0)})} + e^{\frac{2}{d} \sfH(\N(0,\tau_\alpha(t) I))} \right) 
    = \frac{d}{2} \log \left( e^{\frac{2}{d} \sfH(\rho_0) - 2 \alpha t} + 2\pi e \tau_\alpha(t) \right).
\end{equation}
Now, \eqref{ineq:upper} immediately follows \eqref{eq:MI}, \eqref{eq:tau}, \eqref{eq:exp-ent}, and \eqref{ineq:ent-ub},
\begin{align*}
    \MI(\rho_{0,t}) 
    \stackrel{\eqref{eq:MI}}= \sfH(\rho_t) - \E_{\rho_0}[\sfH(\rho_{t \mid 0})] 
    \stackrel{\eqref{eq:exp-ent}, \eqref{ineq:ent-ub}}\le \frac{d}{2} \log \left( \frac{e^{-2\alpha t} J}{\tau_\alpha(t)}  + 1\right)  
    \stackrel{\eqref{eq:tau}}= \frac{d}{2} \log\left(\frac{\alpha J}{e^{2\alpha t}-1} + 1\right) \le \frac{\alpha d J}{2(e^{2\alpha t}-1)},
\end{align*}
where the last inequality is due to the fact that $\log(1+x) \le x$.
Using \eqref{eq:MI}, \eqref{eq:tau}, \eqref{eq:exp-ent}, and \eqref{ineq:ent-lb}, we have
\begin{align*}
    \MI(\rho_{0,t}) \stackrel{\eqref{eq:MI}}= \sfH(\rho_t) - \E_{\rho_0}[\sfH(\rho_{t \mid 0})] \stackrel{\eqref{eq:exp-ent}, \eqref{ineq:ent-lb}}\ge \frac{d}{2} \log \left( \frac{e^{\frac{2}{d} \sfH(\rho_0) - 2 \alpha t}}{2\pi e \tau_\alpha(t)} + 1 \right)
    \stackrel{\eqref{eq:tau}}= \frac{d}{2} \log \left( \frac{\alpha e^{\frac{2}{d} \sfH(\rho_0)}}{2\pi e (e^{2\alpha t}-1)} + 1 \right).
\end{align*}
Hence, \eqref{ineq:lower} is proved.
It follows from the fact that $\log(1+x)\ge x/2$ for $x\in [0,1]$ and \eqref{ineq:lower} that as $t \to \infty$,
\[
\MI(\rho_{0,t}) \ge  \frac{\alpha d e^{\frac{2}{d} \sfH(\rho_0)}}{8\pi e (e^{2\alpha t}-1)}.
\]
Finally, \eqref{eq:mi-rate} follows from the above conclusion and \eqref{ineq:upper}.
\end{proof}

\begin{remark}
    Using Stam's inequality \citep{stam1959some}\,,
    \[
    \Lambda(\rho) \FI(\rho) \ge d,
    \] 
    and the Blachman-Stam inequality \citep{blachman1965convolution}\,,
    \[
\frac{1}{\FI(\rho*\nu)} \ge \frac{1}{\FI(\rho)} + \frac{1}{\FI(\nu)},
\]
instead of using the entropy power inequality (i.e., Fact \ref{fact:epi}),
    we can derive an alternative lower bound,
    \[
    \MI(\rho_{0,t}) 
    \ge \frac{d}{2} \log \left( \frac{\alpha d}{(e^{2\alpha t}-1)\FI(\rho_0)} + 1 \right)\,.
    \]
\end{remark}

\subsubsection{Mutual Information along the Heat Flow}

Recall the target distribution for the OU process discussed in Appendix \ref{app:OU} is $\nu = \N (0, \frac{1}{\alpha}I)$. As $\alpha \to 0$, \eqref{eq:OU} simply reduces to
\begin{equation}\label{eq:heat}
    \D X_t = \sqrt{2} \D W_t\,,
\end{equation}
 which is the heat flow. It is the SDE corresponding to the heat equation: $\partial_t \rho_t = \Delta \rho_t$.

 We have the following proposition that describes the rate of convergence of mutual information along the heat flow.
Its proof follows similarly from that of Proposition \ref{thm:OU} and hence is omitted.

 \begin{proposition}\label{thm:HeatFlow}
Let $X_t \sim \rho_t$ evolve along the heat flow \eqref{eq:heat} from $X_0 \sim \rho_0$ from some $\rho_0$ and let $\rho_{0,t}$ be the joint law of $(X_0,X_t)$.
Then,
\[
\MI(\rho_{0,t}) \le \frac{1}{2} \sum_{i=1}^d \log \left(\frac{\lambda_i(\Cov(\rho_0))}{2t} + 1\right),
\]
where $\lambda_i(\cdot)$ is the $i$-th largest eigenvalue of a matrix, and
\[
\MI(\rho_{0,t}) \ge \frac{d}{2} \log \left( \frac{e^{\frac{2}{d} \sfH(\rho_0)} }{4\pi e t} + 1\right).
\]
Therefore, along the heat flow, as $t \to \infty$, we have
\[
\MI(\rho_{0,t}) = \Theta\left(\frac{1}{t}\right).
\]
\end{proposition}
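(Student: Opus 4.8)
The plan is to mirror the proof of Proposition~\ref{thm:OU} verbatim, replacing the Ornstein--Uhlenbeck solution by the heat-flow solution and the noise variance $\tau_\alpha(t)$ by $2t$ (indeed the heat flow is the $\alpha\to 0$ limit of \eqref{eq:OU}, under which $e^{-\alpha t}\to 1$ and $\tau_\alpha(t)\to 2t$). First I would record the explicit solution of \eqref{eq:heat}: starting from $X_0\sim\rho_0$ we have $X_t = X_0 + \sqrt{2t}\,Z$ with $Z\sim\N(0,I)$, so the conditional law is $\rho_{t\mid 0} = \N(X_0, 2t\,I)$. This gives the conditional entropy $\sfH(\rho_{t\mid 0}) = \tfrac{d}{2}\log(4\pi e t)$, which is independent of $X_0$, hence $\E_{\rho_0}[\sfH(\rho_{t\mid 0})] = \tfrac{d}{2}\log(4\pi e t)$. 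Combined with the decomposition $\MI(\rho_{0,t}) = \sfH(\rho_t) - \E_{\rho_0}[\sfH(\rho_{t\mid 0})]$ from \eqref{eq:MI}, the problem reduces to sandwiching the marginal entropy $\sfH(\rho_t)$.

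For the upper bound I would use that, among distributions with a fixed covariance, the Gaussian maximizes entropy. Since $\rho_t = \rho_0 * \N(0, 2t\,I)$ has covariance $\Cov(\rho_t) = \Cov(\rho_0) + 2t\,I$,
\[
\sfH(\rho_t) \le \tfrac{d}{2}\log(2\pi e) + \tfrac12\log\det(\Cov(\rho_0)+2t\,I) = \tfrac{d}{2}\log(2\pi e) + \tfrac12\sum_{i=1}^d \log\!\big(\lambda_i(\Cov(\rho_0))+2t\big).
\]
Subtracting $\E_{\rho_0}[\sfH(\rho_{t\mid 0})]$ and factoring $2t$ out of each logarithm, the $\tfrac{d}{2}\log(\cdot)$ prefactors cancel exactly and leave the claimed bound $\tfrac12\sum_i \log\big(\tfrac{\lambda_i(\Cov(\rho_0))}{2t}+1\big)$.

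For the lower bound I would apply the entropy power inequality (Fact~\ref{fact:epi}) to the convolution $\rho_t = \rho_0 * \N(0, 2t\,I)$. Using $\Lambda(\rho) = \tfrac{1}{2\pi e}e^{\frac{2}{d}\sfH(\rho)}$ and the computation $\Lambda(\N(0,2t\,I)) = 2t$, the inequality $\Lambda(\rho_t) \ge \Lambda(\rho_0) + 2t$ rearranges to $e^{\frac{2}{d}\sfH(\rho_t)} \ge e^{\frac{2}{d}\sfH(\rho_0)} + 4\pi e t$, i.e. $\sfH(\rho_t) \ge \tfrac{d}{2}\log\big(e^{\frac{2}{d}\sfH(\rho_0)}+4\pi e t\big)$; subtracting the conditional entropy yields the stated lower bound. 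Finally, the matching $\Theta(1/t)$ rate is read off by using $\log(1+x)\le x$ on the upper bound (giving $O(1/t)$) and $\log(1+x)\ge x/2$ for $x\in[0,1]$ on the lower bound, valid once $t$ is large enough that $\tfrac{e^{\frac{2}{d}\sfH(\rho_0)}}{4\pi e t}\le 1$ (giving $\Omega(1/t)$).

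I expect the only delicate bookkeeping to be in the lower bound: correctly evaluating the entropy power of the noise term $\N(0,2t\,I)$ and confirming that the two-sided bounds share the same $1/t$ order, which hinges on the order-sensitive elementary inequalities $x/2 \le \log(1+x) \le x$ over the appropriate range. Everything else is a direct transcription of the Ornstein--Uhlenbeck argument, which is precisely why the statement follows with $\tau_\alpha(t)$ replaced by $2t$.
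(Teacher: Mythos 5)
Your proposal is correct and is exactly the argument the paper intends: the paper explicitly omits this proof, stating it ``follows similarly from that of Proposition~\ref{thm:OU},'' and your transcription (conditional law $\N(X_0,2tI)$, Gaussian maximum-entropy bound for the upper estimate, entropy power inequality with $\Lambda(\N(0,2tI))=2t$ for the lower estimate, and the elementary inequalities $x/2\le\log(1+x)\le x$ for the $\Theta(1/t)$ rate) is the correct specialization with $e^{-\alpha t}\to 1$ and $\tau_\alpha(t)\to 2t$. All the bookkeeping checks out, including the exact cancellation of the $\tfrac{d}{2}\log(4\pi e t)$ terms in both bounds.
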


\subsection{Convergence Rates in Discrete Time along ULA}\label{app:OU_ULA}

Here we discuss ULA for the OU process, indicating the tightness of Theorems \ref{thm:ULA_Main} and \ref{thm:MI_ULA_regularize}.

For simplicity, as in Appendix~\ref{app:OU},  we again consider the target to be $\nu = \N (0, \frac{1}{\alpha}I)$\,. In this case, the ULA update~\eqref{eq:ULA} becomes:
\begin{equation}
    X_{k+1} = (1-\eta \alpha)X_k + \sqrt{2\eta} Z_k\,,\label{eq:OU_ULA}
\end{equation}
and the solution to \eqref{eq:OU_ULA} is:
\begin{equation}\label{eq:OU_ULA_solution}
    X_k = (1-\eta \alpha)^k X_0 + \sqrt{\frac{2(1-(1-\eta \alpha)^{2k})}{\alpha (2-\eta \alpha)}}Z\,,
\end{equation}
where $Z \sim \N(0, I)$\,. To show a tightness result, we simply take $\rho_0$ to be $\N(0, I)$ in the proposition below.

\begin{proposition}\label{thm:OU_ULA}
    Let $X_k$ evolve along \eqref{eq:OU_ULA} from $X_0 \sim \rho_0 = \N (0, I)$. Then 
    \begin{align*}
    \MI(X_0; X_k) = \MI(\rho_{0,k}) = \frac{d}{2} \log \left[ 1+  \frac{(1-\eta\alpha)^{2k}(2\alpha-\eta\alpha^2)}{2(1-(1-\eta \alpha)^{2k})}   \right]\,.
\end{align*}
Therefore, as $k \to \infty$, we have that $\MI(\rho_{0,k}) = \Theta(d\alpha(1-\eta \alpha)^{2k})$.
\end{proposition}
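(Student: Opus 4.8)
The plan is to exploit the fact that, for a Gaussian initialization and the affine-Gaussian update~\eqref{eq:OU_ULA}, the joint law $\rho_{0,k}$ of $(X_0, X_k)$ is itself Gaussian, so the mutual information admits a closed form in terms of the correlation coefficient. Since $\rho_0 = \N(0,I)$, the driving noise $Z$ is standard Gaussian, and both the update and the target are isotropic, the $d$ coordinates of $(X_0, X_k)$ are i.i.d.\ across dimensions; hence the mutual information factorizes as $d$ times the scalar mutual information, and I would reduce to the one-dimensional case and multiply by $d$ at the end.

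First I would read off from the explicit solution~\eqref{eq:OU_ULA_solution} the scalar representation $X_k = a\, X_0 + \sigma_k\, Z$ with $a = (1-\eta\alpha)^k$ and $\sigma_k^2 = \frac{2(1-(1-\eta\alpha)^{2k})}{\alpha(2-\eta\alpha)}$, where $X_0 \sim \N(0,1)$ and $Z \sim \N(0,1)$ are independent. Computing the $2\times 2$ covariance of $(X_0, X_k)$ gives $\Var(X_0) = 1$, $\Cov(X_0, X_k) = a$, and $\Var(X_k) = a^2 + \sigma_k^2$, so the squared correlation coefficient is $\rho^2 = a^2/(a^2 + \sigma_k^2)$.

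Next I would invoke the standard closed form for the mutual information of a centered bivariate Gaussian, $\MI = -\tfrac12 \log(1-\rho^2)$, which follows immediately from the Gaussian differential-entropy identity $\MI(U;V) = \sfH(U)+\sfH(V)-\sfH(U,V)$ together with the factorization $\det\Sigma = \Var(U)\Var(V)(1-\rho^2)$. Since $1 - \rho^2 = \sigma_k^2/(a^2+\sigma_k^2)$, this yields the per-coordinate value $\tfrac12 \log(1 + a^2/\sigma_k^2)$; substituting the expressions for $a$ and $\sigma_k^2$ produces exactly the claimed formula once I multiply by $d$. The only work here is the algebraic simplification $a^2/\sigma_k^2 = \frac{(1-\eta\alpha)^{2k}(2\alpha - \eta\alpha^2)}{2(1-(1-\eta\alpha)^{2k})}$.

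For the asymptotic rate, I would set $\gamma = 1-\eta\alpha \in (0,1)$ (using $\eta \le 1/L \le 1/\alpha$, so $\gamma \ge 0$ and $2-\eta\alpha \in [1,2]$, hence $\Theta(1)$) and observe that, as $k\to\infty$, the argument of the logarithm tends to $1$ while the additive term is $\Theta(\gamma^{2k})$. Applying the two-sided estimate $x/2 \le \log(1+x) \le x$ for small $x \ge 0$ then gives $\MI(\rho_{0,k}) = \Theta\big(d\,\gamma^{2k}(2\alpha - \eta\alpha^2)\big) = \Theta\big(d\alpha(1-\eta\alpha)^{2k}\big)$, as claimed. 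There is no genuine obstacle in this proposition — it is an exact Gaussian computation — so the main care is purely in the algebraic simplification and in tracking that $2-\eta\alpha$ stays between constants so that it can be absorbed into the $\Theta(\cdot)$.
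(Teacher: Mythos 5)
Your proposal is correct and follows essentially the same route as the paper: both start from the explicit affine-Gaussian solution \eqref{eq:OU_ULA_solution} and reduce the mutual information to a closed-form Gaussian entropy computation, the paper via $\MI(\rho_{0,k}) = \sfH(\rho_k) - \E_{\rho_0}[\sfH(\rho_{k\mid 0})]$ and you via the equivalent bivariate-Gaussian identity $\MI = -\tfrac12\log(1-\rho^2)$ applied coordinatewise. The algebra and the $\log(1+x)=\Theta(x)$ asymptotic match the paper's argument.
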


\begin{proof}
It follows from \eqref{eq:OU_ULA_solution} that
\begin{equation}\label{eq:OU_ULA_rhokmid0}
    \rho_{k \mid 0} = \N \left(  (1-\eta \alpha)^k X_0, \frac{2(1-(1-\eta \alpha)^{2k})}{\alpha (2-\eta \alpha)} I    \right),
\end{equation}
which, together with the fact that $X_0 \sim \rho_0 = \N (0, I)$, implies that $X_k \sim \rho_k$ where
\begin{equation}\label{eq:OU_ULA_rhok}
    \rho_k = \N \left( 0, \frac{2+(1-\eta\alpha)^{2k}(2\alpha-\eta\alpha^2-2)}{\alpha (2-\eta \alpha)}I      \right)\,.
\end{equation}
Using \eqref{eq:MI} and the formula for entropy of a Gaussian on \eqref{eq:OU_ULA_rhok} and \eqref{eq:OU_ULA_rhokmid0}, we therefore get that:
\begin{align*}
    \MI(\rho_{0,k}) &\stackrel{\eqref{eq:MI}}= \sfH(\rho_k) - \E_{\rho_0} [\sfH (\rho_{k \mid 0})] = \frac{d}{2} \log \left[  \frac{2+(1-\eta\alpha)^{2k}(2\alpha-\eta\alpha^2-2)}{2(1-(1-\eta \alpha)^{2k})}   \right]\\
    &= \frac{d}{2} \log \left[ 1+  \frac{(1-\eta\alpha)^{2k}(2\alpha-\eta\alpha^2)}{2(1-(1-\eta \alpha)^{2k})}   \right]\,,
\end{align*}
which, together with the fact that $\log(1+x)\ge x/2$ for $x\in [0,1]$, proves the desired claim.
\end{proof}

This shows the tightness of Theorems \ref{thm:ULA_Main} and \ref{thm:MI_ULA_regularize}.

\subsection{Convergence Rates in Discrete Time along the Proximal Sampler}\label{app:OU_PS}

Let the target distribution be $\nu^X = \N(0, \frac{1}{\alpha}I)$ for $\alpha > 0$ and $\rho_0^X = \N(m_0, c_0^2 I)$ for $m_0 \in \R^d$ and $c_0 > 0$. 
 In this case, the conditional distribution $\nu^{X \mid Y}$~\eqref{eq:RGOdistribution} is given by
 \[
 \nu^{X \mid Y}( \,\cdot\, | \,y) = \N \left( \frac{y}{1+\alpha \eta}, \frac{\eta}{1+\alpha \eta}I \right).
 \]
Explicit computation for the Proximal Sampler for the Gaussian case~\citep[Section~4.4]{chen2022improved} reveals that we have $\rho_k^X = \N(m_k, c_k^2 I)$ for $k \geq 0$ where
\[
m_{k+1} = \frac{m_k}{1+\alpha \eta} \hspace{1cm}\text{and}\hspace{1cm}c_{k+1}^2 - \frac{1}{\alpha} = \frac{1}{(1+\alpha\eta)^2}\Big( c_{k}^2 - \frac{1}{\alpha} \Big)\,.
\]
Hence for all $k \geq 0$
\begin{equation}\label{eq:PSIteratesAlongOU}
m_{k} = \frac{m_0}{(1+\alpha \eta)^k} \hspace{1cm}\text{and}\hspace{1cm}c_{k}^2 - \frac{1}{\alpha} = \frac{1}{(1+\alpha\eta)^{2k}}\Big( c_{0}^2 - \frac{1}{\alpha} \Big)\,.
\end{equation}
We use this explicit Gaussian solution to compute $\MI(X_0; X_k)$ along the Proximal Sampler.

\begin{proposition}
    Let $X_k \sim \rho_k^X$ be iterates along the Proximal Sampler~\eqref{eqs:ProximalSampler} with $\nu^X = \N(0, \frac{1}{\alpha}I)$ for $\alpha >0$ and $\rho_0^X = \N(0,I)$. Then
    \[
    \MI(X_0; X_k) = \frac{d}{2} \log \left[ 1 + \frac{\alpha}{(1+\alpha\eta)^{2k}-1} \right].
    \]
    Therefore, as $k \to \infty$, we have that $\MI(\rho_{0,k}) = \Theta(d\alpha(1+\eta \alpha)^{-2k})$.
\end{proposition}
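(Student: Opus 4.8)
The plan is to exploit the explicit Gaussian iterates recorded in~\eqref{eq:PSIteratesAlongOU} together with the mutual information decomposition~\eqref{eq:MI}, following the template of the proofs of Propositions~\ref{thm:OU} and~\ref{thm:OU_ULA}. Writing $\MI(\rho_{0,k}^X) = \sfH(\rho_k^X) - \E_{x_0 \sim \rho_0^X}[\sfH(\rho_{k \mid 0}^X(\cdot \mid x_0))]$, I would compute the two entropies separately. Since $\rho_0^X = \N(0,I)$ corresponds to $m_0 = 0$ and $c_0^2 = 1$, the recursion~\eqref{eq:PSIteratesAlongOU} gives directly $\rho_k^X = \N(0, c_k^2 I)$ with $c_k^2 = \frac{1}{\alpha} + (1+\alpha\eta)^{-2k}\big(1 - \frac{1}{\alpha}\big)$.

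For the conditional law $\rho_{k \mid 0}^X(\cdot \mid x_0)$, I would run the same Gaussian recursion but from the point-mass initialization $\delta_{x_0}$, which corresponds to $m_0 = x_0$ and $c_0^2 = 0$. The recursion then yields $\rho_{k \mid 0}^X(\cdot \mid x_0) = \N\big(\frac{x_0}{(1+\alpha\eta)^k}, c_{k \mid 0}^2 I\big)$ with conditional variance $c_{k \mid 0}^2 = \frac{1}{\alpha}\big(1 - (1+\alpha\eta)^{-2k}\big)$, which crucially does not depend on $x_0$. Using the Gaussian entropy formula $\sfH(\N(m,\sigma^2 I)) = \frac{d}{2}\log(2\pi e\, \sigma^2)$, the $x_0$-independence makes the expectation trivial and the mutual information collapses to $\MI(\rho_{0,k}^X) = \frac{d}{2}\log(c_k^2 / c_{k \mid 0}^2)$. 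Setting $\beta = (1+\alpha\eta)^{2k}$, a short algebraic simplification gives $c_k^2 = \frac{\beta + \alpha - 1}{\alpha\beta}$ and $c_{k \mid 0}^2 = \frac{\beta - 1}{\alpha\beta}$, so the ratio equals $1 + \frac{\alpha}{\beta - 1}$, yielding the claimed formula $\MI(X_0; X_k) = \frac{d}{2}\log\big(1 + \frac{\alpha}{(1+\alpha\eta)^{2k} - 1}\big)$.

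For the asymptotic $\Theta$ statement, I would sandwich the logarithm using $\log(1+x) \le x$ for the upper bound and $\log(1+x) \ge x/2$ for $x \in [0,1]$ for the lower bound (the latter valid once $k$ is large enough that $\frac{\alpha}{\beta - 1} \le 1$), exactly as in Proposition~\ref{thm:OU_ULA}; this gives $\MI(\rho_{0,k}^X) = \Theta\big(d\alpha(1+\eta\alpha)^{-2k}\big)$. The main obstacle is justifying the use of the recursion~\eqref{eq:PSIteratesAlongOU} at the degenerate initialization $c_0^2 = 0$, since the recursion was derived for a genuine Gaussian $\rho_0^X$; I would confirm this by noting that the forward step already regularizes $\delta_{x_0}$ into the honest Gaussian $\N(x_0, \eta I)$, after which the backward step acts as a well-defined linear-Gaussian map and the closed-form covariance update persists in the limit $c_0 \to 0$. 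Everything else reduces to routine Gaussian bookkeeping.
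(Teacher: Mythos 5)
Your proposal is correct and follows essentially the same route as the paper's proof: the decomposition $\MI(\rho_{0,k}^X) = \sfH(\rho_k^X) - \E_{\rho_0^X}[\sfH(\rho_{k\mid 0}^X)]$, the use of the explicit Gaussian recursion~\eqref{eq:PSIteratesAlongOU} for both the marginal and the conditional law (the paper likewise obtains $\rho_{k\mid 0=x}$ by taking the limit $c^2 \to 0$ of an initialization $\N(x, c^2 I)$), and the same Gaussian-entropy bookkeeping and $\log(1+x)$ bounds for the $\Theta$ asymptotics. Your extra remark justifying the degenerate initialization via the regularizing forward step is a welcome clarification of a point the paper handles by the same limiting argument.
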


\begin{proof}
    Recall from \eqref{eq:MI} that $\MI(\rho_{0,k}) = \sfH(\rho_k) - \E_{\rho_0} [\sfH (\rho_{k \mid 0})]$. 
    It follows from~\eqref{eq:PSIteratesAlongOU} that $\rho_k = \N(m_k, c_k^2 I)$ with
    \begin{equation}\label{eq:density}
        m_{k} = 0 \hspace{1cm}\text{and}\hspace{1cm}c_{k}^2 - \frac{1}{\alpha} = \frac{1}{(1+\alpha\eta)^{2k}}\Big( 1 - \frac{1}{\alpha} \Big).
    \end{equation}
    It remains to evaluate $\rho_{k|0}$ in order to compute $\E_{x \sim \rho_0} [\sfH (\rho_{k \mid 0 = x})]$. Consider a fixed $x \in \R^d$ and suppose we wish to compute $\rho_{k \mid 0 = x}$. Then we can view this distribution as the solution along the Proximal Sampler when started from $\N(x, c^2I)$ as $c^2 \to 0$. Using~\eqref{eq:PSIteratesAlongOU} again, we obtain
    \begin{equation}\label{eq:rho0k}
        \rho_{k\mid 0 = x} = \N \Big( \frac{x}{(1+\alpha \eta)^k}, \frac{1}{\alpha}\big( 1-\frac{1}{(1+\alpha\eta)^{2k}}  \big)I \Big)\,.
    \end{equation}
    Using \eqref{eq:MI} and the formula for entropy of a Gaussian on \eqref{eq:density} and \eqref{eq:rho0k}, we therefore get that 
    \begin{align*}
        \MI(X_0; X_k) &\stackrel{\eqref{eq:MI}}=\sfH(\rho_k) - \E_{\rho_0} [\sfH (\rho_{k \mid 0})]\stackrel{\eqref{eq:density}, \eqref{eq:rho0k}}= \frac{d}{2}\log \left[  \frac{(1+\alpha\eta)^{2k} + \alpha-1}{(1+\alpha\eta)^{2k}-1}   \right]\\
        &= \frac{d}{2} \log \left[ 1 + \frac{\alpha}{(1+\alpha\eta)^{2k}-1} \right].
    \end{align*}
    This proves the desired claim.
\end{proof}

This shows the tightness of Theorem \ref{thm:ProximalMain}.

\addcontentsline{toc}{section}{References}
\bibliography{mi}

\end{document}